\theoremstyle{plain}
\newtheorem{theorem}{Theorem}[section]
\newtheorem{lemma}[theorem]{Lemma}
\newtheorem{proposition}[theorem]{Proposition}
\newtheorem{defin}[theorem]{Definition}
\newtheorem{remark}[theorem]{Remark}
\newtheorem{prob}[theorem]{Problem}
\newcommand{\cvd}{\hfill$\square$}
\newenvironment{proof}[1][Proof]{\noindent\textbf{#1} }{\cvd}
\renewcommand{\eqref}[1]{\textnormal{(\ref{#1})}}
\numberwithin{equation}{section}
\newcommand{\rmi}{\mathrm{i}}
\newcommand{\R}{\mathbb{R}}
\newcommand{\N}{\mathbb{N}}
\newcommand{\divrg}{\mathrm{div}\,}
\title{The stability for the Cauchy problem for elliptic equations\thanks{This work has been completed  in spite of the indiscriminate budget cuts of the Italian Ministry of University and Research.}}
\author{Giovanni Alessandrini,\thanks{Dipartimento di Matematica e Informatica,
Universit\`a degli Studi di Trieste, via Valerio 12/1, 34127
Trieste, Italy. E-mail: \textsf{alessang@units.it}}
\ Luca Rondi,\thanks{Dipartimento di Matematica e Informatica,
Universit\`a degli Studi di Trieste, via Valerio 12/1, 34127
Trieste, Italy. E-mail: \textsf{rondi@units.it}} \ Edi
Rosset,\thanks{Dipartimento di Matematica e Informatica,
Universit\`a degli Studi di Trieste, via Valerio 12/1, 34127
Trieste, Italy. E-mail: \textsf{rossedi@units.it}} \ and
Sergio Vessella\thanks{DIMAD, Universit\`a degli Studi di Firenze,
via Lombroso 6/17, 50134 Firenze, Italy. E-mail:
\textsf{sergio.vessella@dmd.unifi.it}}}
\date{}
\begin{document}

\setcounter{section}{0}
\setcounter{secnumdepth}{2}

\maketitle

\begin{abstract}
We discuss the ill-posed Cauchy problem for elliptic equations, which is pervasive
in inverse boundary value problems modeled by elliptic equations.

We provide essentially optimal stability results, in wide generality and under substantially minimal assumptions.

As a general scheme in our arguments, we show
that all such stability results can be derived by the use of a single building brick,
the three-spheres inequality.

\medskip

\noindent\textbf{Mathematics Subject Classification (2000)}
Primary 35R25. Secondary 35B60, 35R30, 31A15, 30C62, 30G20.

\medskip

\noindent \textbf{Keywords}
Cauchy problem, elliptic equations, ill-posed problems, three-spheres inequalities, conditional stability, quasiconformal mappings
\end{abstract}

\section{Introduction\label{Sect 1}}

Hadamard, in his paper of 1902 \cite{hadamWP} where he laid the basis of the notion
of well-posed problems, used the Cauchy problem for Laplace's equation as
his first example of a problem which is not well-posed. Later, in 1923, he
published \cite{hadamCAU} his well-known example of instability, see Subsection~\ref{Subsect 1.1} below for further discussion and also Maz$'$ya and Shaposhnikova  \cite{mazya} for additional information.

It may be curious to note that in the same span of time, the physical
relevance of this problem was to be encountered in the applications. For
instance, in geophysical underground prospection, the geoelectrical method was initiated
in those years, see for instance Stefanesco et al. \cite{stefanesco} and the historical
account
in Zhdanov and Keller \cite{zhdanov}. And in fact it is well acknowledged, by now,
that the geoelectrical method involves, even in its most basic formulation,
the solution of a Cauchy problem for Laplace's equation! On this respect,
one can consult, for instance, the initial considerations in the
book by Lavrent$'$ev, Romanov and {\v{S}}i{\v{s}}atski{\u\i}
\cite{LRS}.

Nowadays, it is widely recognized that the Cauchy problem for Laplace's
equation, and more generally for elliptic equations, has a central position
in all inverse boundary value problems which are modeled by means of
elliptic partial differential equations, Inverse Scattering,
Electrical Impedance Tomography, Optical Tomography, just to mention a
few. The continuing interest on this kind of problem is documented by the
number of publications which are currently appearing on this problem. For
instance, we have recorded at least 15 papers explicitly devoted to this
topic, which have appeared in the last three years on this Journal.

Therefore, we believe that it might be useful to formulate in a clear
fashion the state of the art on the issue of stability, which is obviously a
crucial cornerstone of the convergence analysis of any reconstruction
procedure and also of the stability analysis of many \emph{nonlinear}
inverse boundary value problems, whose treatment involves, in one way or
another, the analysis of an ill-posed Cauchy problem.

In this introduction we do not intend to present a complete discussion on
the historical development on this subject since Hadamard, because various
monographs, Lavrent$'$ev \cite{lavrentev}, Payne \cite{paynelibro}, Lavrent$'$ev, Romanov and {\v{S}}i{\v{s}}atski{\u\i}
\cite{LRS}, Isakov \cite{isakovlib1,isakovlib2}, H\"ormander \cite{hormanderbook1,hormanderbook2}, already contain abundant information on such
development. However, besides stating and proving results of stability of
sufficient generality and optimality it may be useful to discuss some
different, although intertwined, lines of reasoning which, in our view, have
led to the current state of the art.

\subsection{Instability and conditional stability\label{Subsect 1.1}}

In his essay of 1923 \cite{hadamCAU}, Hadamard provided a fundamental example which
shows that a solution of a Cauchy problem for Laplace's equation \emph{does
not} depend continuously upon the data. The example is as follows.

Consider the solution $u=u_{n}$, $n=1,2,\ldots$ to the Cauchy problem
in the upper half plane
$$
\left\{
\begin{array}{ll}
\Delta u=0, & \text{in }\{(x,y)\in\mathbb{R}^{2}\ |\
y>0\},\\
u(x,0) =0, & \text{for every }x\in \mathbb{R},\\
u_y(x,0)=A_n\sin nx, & \text{for every }x\in \mathbb{R}.
\end{array}
\right.
$$
We have
$$
u_n=\frac{A_n}{n}\sin nx\sinh ny\text.
$$
If we choose $A_n=\dfrac{1}{n}$ or $A_n=\dfrac{1}{n^{p}}$ for some $p>0$,
or even $A_n=e^{-\sqrt{n}}$, it turns out that
$$
u_{n,y}(x,0) \to 0\text{ uniformly as }n\to
\infty
$$
whereas, for any $y>0$,
$$
u_n(x,y) =\frac{A_n}{n}\sin nx\sinh ny\text{ blows up as }
n\to \infty.
$$
As is well-known since Tikhonov \cite{tichonov} the modern notion of \emph{stability}
for ill-posed problems (also called \emph{conditional stability}) consists
of estimating the dependence upon the data of the unknown solution of the
problem at hand, when an a-priori bound on the solution itself is available.

It is a remarkable fact that Hadamard eventually acknowledged that
continuous dependence can be restored in presence of an a-priori bound. In
fact, in his treatise of 1964 \cite[p.~146]{hadamPDE} he wrote
\begin{quote}``D'apr\`{e}s un
remarquable r\'{e}sultat d\^{u} a M. Pucci, l'absence de
continuit\'{e} de la solution $u$ du probl\'{e}me de Cauchy
consid\'{e}r\'{e}e comme functionelle des donn\'ees initiales est
solidaire du fait que cette solution est susceptible d'augmenter
infinitement: M. Pucci constate que le choses changent si l'on
connait une borne sup\'{e}riore de le valeur absolute
$|u|$.''\footnote{``After a remarkable result due to Mr. Pucci,
the lack of continuity of the solution $u$ of the Cauchy problem
considered as a functional of initial data is joint to the fact
that this solution is susceptible of infinite growth: Mr. Pucci
observes that things change if one knows an upper bound of the
absolute value $|u|$.''}\end{quote} In fact, Hadamard is referring
to a paper by Pucci \cite{puccilincei} of 1955 where one of the
first results of stability for the Cauchy problem for Laplace's
equation was obtained. Let us recall that in the same years other
stability estimates were obtained by John \cite {john1}, Landis
\cite{landiscauchy} and Lavrent$'$ev
\cite{lavrentevdokl,lavrentevisz}, see also Pucci \cite{pucci2}
and John \cite{john2}.

It may be instructing to observe that the same example by Hadamard
may be used to exhibit the best possible rates of continuous
dependence in presence of an a-priori bound. To this purpose let
us describe a Cauchy problem in the most simple and favourable
setting. We express the a-priori bound and the bounds on the data
with respect to norms which can be considered as the natural ones
in the standard variational formulation of Laplace's equation, but
it will be evident that analogous results would be obtained also
if other (reasonable) functional frameworks are considered.

Consider a Cauchy problem in a rectangle
\begin{equation}\label{Int1}
\left\{
\begin{array}{ll}
\Delta u=0, & \text{in }(0,\pi) \times ( 0,1),\\
u(x,0) = 0, & \text{for every }x\in (0,\pi),\\
u_y(x,0) =\psi (x), & \text{for every }x\in (0,\pi),
\end{array}
\right.
\end{equation}
and, in order to make things even simpler, we further assume a zero
Dirichlet condition on the vertical sides of the rectangle
\begin{equation}\label{Int2}
u(0,y) =u(\pi,y) = 0,\quad \text{for every }y\in(0,1).
\end{equation}
The natural function space of the solution $u$, in a variational setting, is the Sobolev
space $H^{1}\left( \left( 0,\pi \right) \times \left( 0,1\right) \right) $
and thus, as a-priori information on the unknown solution $u$, we assume the
bound on the Dirichlet integral
\begin{equation}
\iint_{\left( 0,\pi \right) \times \left( 0,1\right) }\left(
u_{x}^{2}+u_{y}^{2}\right) dxdy\leq E^{2}\text{,}  \label{Int3}
\end{equation}
for a given $E>0$.

The prescribed inhomogeneous data $\psi $, which express the (partial)
Neumann data on the lower horizontal side of the rectangle, naturally lives in
the trace space $H^{-\frac{1}{2}}\left( 0,\pi \right) $. Let us assume then that the
following error bound is known
\begin{equation}
\left\Vert \psi \right\Vert _{H^{-\frac{1}{2}}\left( 0,\pi \right) }\leq \eta
\text{,}  \label{Int4}
\end{equation}
for some given $\eta >0$.

A stability estimate would consist of a bound of some norm of $u$ evaluated
inside the rectangle in terms of some function $\omega \left( \eta
,E\right) $ which should be infinitesimal as $\eta \to 0^+$. The
Hadamard example provides us with limitations on such infinitesimal rate.
Let us choose once more
\begin{equation}
\psi _{n}\left( x\right) =A_{n}\sin nx\text{, }n=1,2,\ldots  \label{Int5}
\end{equation}
and let us select $A_{n}$ in such a way that equality holds in (\ref{Int3}).
We obtain
\begin{equation}
A_{n}^{2}=\frac{2}{\pi }\frac{2n}{\sinh 2n}E^{2}\text{.}  \label{Int6}
\end{equation}
Consequently, in (\ref{Int4}) we have equality when $\eta
=\eta _{n}$ where $\eta _{n}$ is given by
\begin{equation}
\eta _{n}^{2}=E^{2}\frac{2}{\sinh 2n}\sim 4E^{2}e^{-2n}\text{, as }
n\to \infty \text{.}  \label{Int7}
\end{equation}
If we wish to estimate the $L^{2}$-norm of $u$ in the rectangle $\left(
0,\pi \right) \times \left( 0,T\right) $, for some $T\in \left( 0,1\right] $, then we see that the solution to (\ref{Int1}), (\ref{Int2}) with $\psi $
given by (\ref{Int5}), (\ref{Int6}), satisfies
\begin{equation}
\left\Vert u_{n}\right\Vert _{L^{2}\left( \left( 0,\pi \right) \times \left(
0,T\right) \right) }^{2}=\frac{E^{2}}{n\sinh 2n}\left( \frac{\sinh 2nT}{2n}
-1\right) \sim E^{2}\frac{e^{2n\left( T-1\right) }}{2n^{2}}\text{, as }
n\to \infty \text{.}
\label{Int8}
\end{equation}
That is
\begin{equation}
\left\Vert u_{n}\right\Vert _{L^{2}\left( \left( 0,\pi \right) \times \left(
0,T\right) \right) }\sim \frac{E}{\sqrt{2}}\left( \frac{\eta _{n}}{2E}\right) ^{\left(
1-T\right) }\left( \log \frac{2E}{\eta _{n}}\right) ^{-1}\text{, as }
n\to \infty \text{.}  \label{Int9}
\end{equation}
Therefore, if $T<1$, then the stability of the determination of $u$ \emph{up to the level}
$y=T$ is \emph{at best }of \emph{H\"{o}lder} type.
Whereas, if we want to recover $u$ in all of its domain of definition (up to
the top side of the rectangle, \emph{where no boundary data is prescribed})
then the best possible rate of stability is \emph{logarithmic}.

It is generally acknowledged that this phenomenon has a quite general
character when dealing with the Cauchy problem for elliptic equations. And
in fact the results in the following sections agree with such a scheme.

We shall distinguish between two types of results.

\begin{description}
\item{\textbf{Stability estimates in the interior.}} A solution $u$
of a Cauchy problem is
a-priori
known to be bounded (with respect to some norm) on a  connected open set
$\Omega $,
Cauchy data are prescribed on some portion $\Sigma $ of $\partial \Omega $
and we wish to estimate $u$ on some connected open subset $G$ of $\Omega $
which is at a
positive distance from $\partial \Omega \setminus \Sigma $, the part of the
boundary where no data are prescribed.

\item{\textbf{Global stability estimates.}} We want to estimate $u$ in some
norm in all of $\Omega $ when an upper bound with a slightly stronger norm
in the same set $\Omega $ is a-priori known, and, as before, Cauchy data
are prescribed on a portion $\Sigma $ of $\partial \Omega $.
\end{description}

We reiterate that the Hadamard example tells us that for a \emph{stability
estimate in the interior} we cannot expect anything better than a \emph{H\"{o}lder rate}, whereas for \emph{global stability} the optimal rate will
be of \emph{logarithmic type} at most.

A rather general treatment of stability in the interior is due to Payne
\cite{payne70} (see also Payne \cite{payne60}, Trytten \cite{trytten}). Global stability estimates in a
wide generality, that is for general elliptic operators and general domains,
have been known and used for quite a while, but probably statements and
proofs are not easily available in the literature. In fact, a surge of
interest on this topic occurred in the 90's in connection with nonlinear
inverse boundary value problems with unknown boundaries and global stability
estimates were described and used (Alessandrini \cite{alesscrack93}, Alessandrini and
Di Benedetto \cite{alessdib},
Beretta and Vessella \cite{beve}, Bukhgeim, Cheng and Yamamoto \cite{bukhgchengyama2,bukhgchengyama1}, Alessandrini, Beretta, Rosset and Vessella \cite{ABRVpisa},
Cheng, Hon and Yamamoto \cite{chenhonyama1,chenhonyama2}) but, unfortunately, most of the times such
estimates were not explicitly stated as independent results. As an
exception, we mention Takeuchi and Yamamoto \cite[Theorem~10]{takyamamoto}.

\subsection{Analytic continuation\label{Subsect2}}

In the special case of two space variables, the Cauchy problem for Laplace's
equation is equivalent to the problem of continuation of a complex analytic
function from values prescribed on an arc. This problem was treated with
great ingenuity by Carleman in 1926. His theory is expounded in the essay of 1926
\cite{carlemanlibro} and his ideas have had a great influence in the subsequent
developments of the theory. A modern treatment of the connection between the
Cauchy problem and the analytic continuation in two dimensions can be found
in \cite[Ch.~1, \S~2]{LRS}. The crucial tool for stability is the so-called
method of harmonic measure. In fact, the seminal idea of this method can be
traced back to Carleman \cite{carleman21}, but a general formulation of this approach can be
attributed to F. and R.~ Nevanlinna \cite{nevanlinna}, see in this respect Goluzin
\cite[Ch.~VII, \S~4]{goluzin}.

The two-dimensional theory for Cauchy problems maintains a special position
also when dealing with elliptic operators with variable coefficients.
Indeed, still with the aid of complex analytic methods, uniqueness, Alessandrini and Magnanini \cite{alessmagn}, and
stability, Alessandrini and Rondi \cite{alerondiSIAM}, can be obtained \emph{with no need }of regularity assumptions on
the coefficients (contrary to what happens in higher dimensions, see in
this respect Subsection~\ref{Subsect1.3} below). An extended version of the
harmonic measure technique has been developed also for the variable
coefficients case in Alessandrini and Rondi \cite{alerondiSIAM}. In Section~\ref{2Dsec} below we shall
discuss in more detail this kind of results.

\subsection{The Cauchy problem and the unique continuation property\label{Subsect1.3}}

An issue which is strictly related to the Cauchy problem is the one of
\emph{unique continuation}. An elliptic operator $\mathcal{L}$ is said to have the
(weak) unique continuation property if for any solution $u$ to $\mathcal{L}u=0$ in a
connected open set $\Omega \subset \mathbb{R}^{n}$, that vanishes on an
open subset $G\subset \Omega $, it follows $u\equiv 0$ in $\Omega $. A
general proof of the equivalence of the \emph{uniqueness} of the Cauchy
problem with the weak \emph{unique continuation property} can be traced
back to Nirenberg \cite{nirenberg}. This equivalence has been especially important in
establishing the limits of validity of uniqueness in terms of the regularity
of the coefficients of the elliptic operator involved. In fact it was shown
by Pli\v{s} \cite{plis} that for an elliptic operator $\mathcal{L}$ in dimension $n\geq 3$, the unique continuation may fail if
the coefficients of the principal part are H\"{o}lder continuous of any
exponent smaller than $1$. Further examples were obtained by Miller \cite{miller1}, see also
Miller \cite{miller2} for improvements and discussion on such counterexamples, and
Mandache \cite{mandache}, Filonov \cite{filonov} for further developments.

The progress on this issue of unique continuation was initiated by Carleman \cite{carleman33}, subsequent advances were due to M\"uller \cite{muller},
Heinz \cite{heinz}, Hartman and Wintner \cite{hartmanw}, Cordes \cite{cordes},
Aronszajn \cite{aronszajn}.
Eventually, it was
proved by Aronszajn, Krzywicki and Szarski
\cite{AKS} that the unique continuation property
holds true when the coefficients in the principal part are Lipschitz
continuous. Soon afterwards, Pli\v{s} produced
the already mentioned example that shows that such Lipschitz continuity
provides indeed the crucial threshold.
A great deal of investigation followed, especially with the
purpose of extending the unique continuation property to Schr\"{o}dinger
operators with singular potentials. With no ambition of completeness, let us
mention H\"{o}rmander \cite{hormander}, Jerison and Kenig \cite{jerisonkenig},
Garofalo and Lin \cite{garofalolin1,garofalolin2},
Fabes, Garofalo and Lin \cite{fabesgarofalolin} and, more recently, Koch and Tataru \cite{kochtataru}.

\subsection{Three-spheres inequalities\label{Subsect1.4}}

The unique continuation property is also connected to the problem of the
stability for the Cauchy problem. In fact, proofs of the unique continuation
property depend on inequalities which can also be applied to the estimation
of stability. There are two families of such inequalities

\begin{itemize}
\item Carleman estimates,

\item Three-spheres inequalities.
\end{itemize}

Both types have been succesfully used in the study of stability,
and they are strictly intertwined, in fact three-spheres inequalities can be
deduced by Carleman estimates.

In this paper, a central theme that we intend to stress is that
three-spheres inequalities can be used as a universal building brick to
derive optimal stability estimates. Our guiding idea shall be to confine all
the hard-analysis which is required to the derivation of a \emph{basic inequality} in
a simple geometrical setting (the three-spheres) and then use it
iteratively to adapt to general geometrical configurations.

We find quite instructing to remark at this point that Hadamard, the same
mathematician who first pointed out the \emph{ill} character
of the Cauchy problem, was the one who first provided a \emph{cure}
(actually, even before the illness was diagnosed!). In fact, Hadamard first
stated in 1896 \cite{hadam3s} a \emph{three-circles inequality}, which in its
simplest manifestation is as follows. Given a holomorphic function $f$ in
the disk $\left\{ z\in \mathbb{C}\mid \left\vert z\right\vert <R\right\} $
then the function
\[
\log r\to \log \left( \max\limits_{\left\vert z\right\vert
=r}\left\vert f\left( z\right) \right\vert \right) \text{, }0<r<R
\]
is convex. Quoting once more Hadamard \cite[p.~94]{hadam3s}
\begin{quote}``\ldots je d\'esignerai par $\eta$
le logarithme du module maximum de la fonction sur le cercle de rayon
$\varepsilon^{\xi}$ (o\`u $\xi$ est un nombre r\'eel quelconque). Le
lieu du point $(\xi,\eta)$ est une courbe $C$ qui tourne toujours sa concavit\'e vers le $\eta$ positifs; \ldots''\footnote{``\ldots I shall denote with $\eta$ the logarithm of the maximum modulus of the function on the circle of radius
$\varepsilon^{\xi}$ (where $\xi$ is any real number). The locus of the points
$(\xi,\eta)$ is a curve $C$ that always bends its concavity
towards the positive $\eta$'s; \ldots''}\end{quote}
In other terms, if $0<r_{1}<r_{2}<r_{3}<R$, then
\[
\max\limits_{\left\vert z\right\vert =r_{2}}\left\vert f\left( z\right)
\right\vert \leq \left( \max\limits_{\left\vert z\right\vert
=r_{1}}\left\vert f\left( z\right) \right\vert \right) ^{\alpha }\left(
\max\limits_{\left\vert z\right\vert =r_{3}}\left\vert f\left( z\right)
\right\vert \right) ^{1-\alpha }\text{,}
\]
where $\alpha \in \left( 0,1\right) $ is given by
\[
\alpha =\frac{\log \frac{r_{3}}{r_{2}}}{\log \frac{r_{3}}{r_{1}}}\text{.}
\]
This inequality had a great influence in the following development of
complex analysis \cite{duren} but it had also a seminal character in the study of
unique continuation for elliptic equations. A three-spheres inequality for
elliptic operators whose principal part coefficients are $C^{2}$ was proved
by Landis \cite{landis3s}, and in fact his proof was based on Carleman's type
estimates. Another proof obtained by a method of differential inequalities
for integral norms, which took the name of \emph{logarithmic convexity},
was obtained by Agmon \cite{agmon}. In the 70's the general concept of logarithmic
convexity had indeed a notable influence in the analysis of various
ill-posed problems for partial differential equations. The proceedings book
edited by Knops \cite{knops} documents the advances in this direction.

With more precise connection with the stability for the Cauchy problem, this
approach reached its apex in the work by Payne \cite{payne60,payne70}, see also the
contribution by Trytten \cite{trytten}. In particular in \cite{payne70}, Payne obtained
stability estimates in the interior of H\"{o}lder type when the coefficients
of the principal part are $C^{1}$.

Returning however to the more specific issue of the three-spheres
inequalities, we mention that recent proofs have been obtained by
Brummelhuis \cite{brummelhuis}, and by Kukavica \cite{kukavica}. Both authors use, with some
variations, the so-called method of the \emph{frequency function} by
Garofalo and Lin \cite{garofalolin1}. In fact the \emph{frequency function }method can
be viewed as a further notable advance and clarification of the ideas of
logarithmic convexity.

We shall formulate various versions of the
three-spheres inequality and we shall provide a proof which is modeled, with
few adjustements, on the one by Kukavica \cite{kukavica}. Let us remark however that the
same inequality might be obtained also by means of Carleman estimates. We
refer to Vessella \cite{vessella} for a general discussion of this approach, in the
wider context of parabolic equations. Let us also quote Lin, Nakamura and Wang \cite{linnakawang}
for a very recent investigation in this direction.

\subsection{Doubling inequalities \label{Subsect1.5}}

It cannot be omitted at this point that reasearch on unique continuation has
been especially concentrated on the aspect of the \emph{strong} unique
continuation property, that is, if $u$ solves the elliptic equation $\mathcal{L}u=0$
in a connected open set $\Omega $ and $u$ vanishes of infinite order at one
point $x_{0}\in \Omega $ (that is $u\left( x\right) =O\left( \left\vert
x-x_{0}\right\vert ^{N}\right) $ as $x\to x_{0}$, for every $N=1,2,\ldots$) then $u$ has to be zero everywhere.

This property does not have a direct connection with the stability of the
Cauchy problem. However, quantitative versions of the strong unique
continuation property have shown to be very useful in the study of stability
of certain inverse boundary value problems. Such quantitative estimates are
in fact the \emph{doubling inequality} by Garofalo and Lin \cite{garofalolin1} and the \emph{doubling
inequality at the boundary}, see Adolfsson, Escauriaza and Kenig \cite{adolescaken},
Adolfsson and Escauriaza \cite{adolesca}, Kukavica and Nystr\"om \cite{kuknyst}. Applications to elliptic
inverse boundary value problems occurred for instance in Alessandrini, Rosset and Seo \cite{alrossseo},
Alessandrini, Beretta, Rosset and Vessella \cite{ABRVpisa}.
We recall also that such quantitative estimates have been used in connection with the
problem of estimates of continuation from measurable sets, which is very much related to the Cauchy problem as well. In this respect let us mention Nadirashvili \cite{nadir1,nadir2}, Vessella \cite{sergio1,sergio2}, and Malinnikova \cite{malinnikova}.

\subsection{The scheme of a stability proof\label{Subsect1.6}}

Our general scheme of proof for the stability of a Cauchy problem will be
as follows.

\begin{enumerate}[I)]
\item First we prove a three-spheres inequality. We shall need an inequality of
this kind not only for solutions of homogeneous elliptic equations but more
generally for solutions of inhomogeneous equations $\mathcal{L}u=\mathcal{F}$ with an $H^{-1}$
right-hand side $\mathcal{F}$ (see Theorem~\ref{cor:3spheres_nonhomogeneous_2}).

\item Next we use iteratively the three-spheres inequality to obtain \emph{
estimates of propagation of smallness}. We assume an a-priori bound on a
solution $u$ on its domain $\Omega $ and that $u$ is small in a given
(small) ball $B_{r_{0}}\left( x_{0}\right) \subset \Omega $ and we estimate
how small is $u$ in some larger connected open set $G\subset \Omega $. If $G$ is at
a positive distance from $\partial \Omega $ we shall speak of
\emph{estimates of propagation of smallness in the interior} (see Theorem~\ref{theo:PSinterior}),
instead if $G$ agrees with $\Omega $ we shall speak of \emph{global
estimates of propagation of smallness} (see Theorem~\ref{theo:PSglobal}
and also Remark~\ref{rem:loglog_PS}).

\item Then, given a solution of a Cauchy problem in $\Omega $ with data on a
portion $\Sigma $ of $\partial \Omega $, we extend $u$ to an open set $\mathcal{A}$
outside of $\Omega $, whose boundary agrees with $\partial \Omega $ on a
subset of $\Sigma $. We perform such extension in such a way that the
extended function $\widetilde{u}$ solves in $\mathcal{A}$ an inhomogeneous equation
with a right-hand side which is controlled in a Lipschitz fashion by the
Cauchy data on $\Sigma $ (see Theorem~\ref{theo:extension} and the following Remark~\ref{rem:extension}).

\item Finally, we apply the estimates of propagation of smallness in the
augmented domain $\widetilde{\Omega}=\overset{\circ}{\overline{\Omega \cup \mathcal{A}}}$ thus obtaining an interior stability
estimate for the Cauchy problem in Theorem~\ref{theo:Cauchyinterior}, and a global stability
estimate in Theorem~\ref{theo:Cauchy_global}.

\end{enumerate}

\subsection{The main hypotheses and statements\label{Subsect1.7}}

In order to discuss in more detail the hypotheses that are used in our treatment
it is necessary to introduce some notation and definitions.

Given $x\in \R^n$, we shall denote $x=(x',x_n)$, where $x'=(x_1,\ldots,x_{n-1})\in\R^{n-1}$, $x_n\in\R$.
Given $x\in \R^n$, $r>0$, we shall use the following notation for balls and cylinders.
\begin{equation*}
   B_r(x)=\{y\in \R^n\ |\ |y-x|<r\}, \quad  B_r=B_r(0),
\end{equation*}
\begin{equation*}
   B'_r(x')=\{y'\in \R^{n-1}\ |\ |y'-x'|<r\}, \quad  B'_r=B'_r(0),
\end{equation*}
\begin{equation*}
   \Gamma_{a,b}(x)=\{y=(y',y_n)\in \R^n\ |\ |y'-x'|<a, |y_n-x_n|<b\}, \quad \Gamma_{a,b}=\Gamma_{a,b}(0).
\end{equation*}
Throughout this paper  we shall denote by $\Omega$ a bounded
open connected subset of $\mathbb{R}^{n}$.
In places we shall assume that
the boundary of $\Omega$ is Lipschitz according to the following definition.
\begin{defin}[Lipschitz regularity]
  \label{def:Lipschitz_boundary}
We say that the boundary of
$\Omega$ is of \emph{Lipschitz class} with
constants $\rho_{0}$, $M_{0}>0$, if, for any $P \in \partial\Omega$, there exists
a rigid transformation of coordinates under which $P=0$
and
\begin{equation}
   \label{bordo_lip1}
  \Omega \cap \Gamma_{\frac{\rho_{0}}{M_0},\rho_0}(P)=\{x=(x',x_n) \in \Gamma_{\frac{\rho_{0}}{M_0},\rho_0}\quad | \quad
x_{n}>Z(x')
  \},
\end{equation}
where $Z:B'_{\frac{\rho_{0}}{M_0}}\to\R$ is a Lipschitz function satisfying
\begin{equation}
   \label{bordo_lip2}
Z(0)=0,
\end{equation}
\begin{equation}
   \label{bordo_lip3}
\|Z\|_{{C}^{0,1}\left(B'_{\frac{\rho_{0}}{M_0}}\right)} \leq M_{0}\rho_{0}.
\end{equation}
\end{defin}

\begin{remark}
  \label{rem:M0_big}
  For practical purposes it will turn out useful to assume throughout
  that $M_0\geq 1$. In fact it is evident from Definition~\ref{def:Lipschitz_boundary}
  that conditions \eqref{bordo_lip1}--\eqref{bordo_lip3} continue to hold if $M_0$ is increased.
\end{remark}

\begin{remark}
  \label{rem:normal_norm}
  Throughout this paper we shall use the convention to normalize all norms in such a way that they are
  dimensionally equivalent to their argument and coincide with the
  standard definition when the dimensional parameter $\rho_0$ equals $1$.
  For instance, the norm appearing above is meant as follows
\begin{equation*}
  \|Z\|_{{C}^{0,1}\left(B'_{\frac{\rho_0}{M_0}}\right)} =
  \|Z\|_{{L}^{\infty}\left(B'_{\frac{\rho_0}{M_0}}\right)}+
  \rho_0\|\nabla Z\|_{{L}^{\infty}\left(B'_{\frac{\rho_0}{M_0}}\right)}.
\end{equation*}

Similarly, we shall set
\begin{equation*}
\|u\|_{L^2(\Omega)}=\rho_0^{-\frac{n}{2}}\left(\int_\Omega u^2
\right)^{\frac{1}{2}},
\end{equation*}
\begin{equation*}
\|u\|_{H^1(\Omega)}=\rho_0^{-\frac{n}{2}}\left(\int_\Omega u^2
+\rho_0^2\int_\Omega|\nabla u|^2\right)^{\frac{1}{2}},
\end{equation*}
and so on for boundary and trace norms such as
$\|\cdot\|_{L^2(\partial\Omega)}$,
$\|\cdot\|_{H^{\frac{1}{2}}(\partial\Omega)}$,
$\|\cdot\|_{H^{-\frac{1}{2}}(\partial\Omega)}$.
\end{remark}

In some instances we shall require that only a limited open portion $\Sigma$ of $\partial\Omega$
be Lipschitz in the following sense.
Some further notation is necessary.
We shall denote
\begin{equation}
   \label{Sigma'}
\Sigma'=\partial\Omega\setminus \Sigma,
   \end{equation}
and, for every $P\in\Sigma$, we set
\begin{equation}
   \label{r(P)}
   r(P)=\mathrm{dist}(P,\Sigma'),
\end{equation}
\begin{equation}
   \label{rho(P)}
   \rho(P)=\min\left\{\rho_0,\frac{r(P)M_0}{\sqrt{1+M_0^2}}\right\}.
\end{equation}
\begin{defin}
  \label{def:Lipschitz_Sigma}
An open subset $\Sigma\subset\partial\Omega$ is said to be an open Lipschitz portion of $\partial\Omega$ with
constants $\rho_{0}$, $M_{0}>0$, if, for any $P \in \Sigma$, there exists
a rigid transformation of coordinates under which $P=0$
and
\begin{equation}
   \label{Sigma_lip}
  \Omega \cap \Gamma_{\frac{\rho(P)}{M_0},\rho(P)}(P)=\{x=(x',x_n) \in \Gamma_{\frac{\rho(P)}{M_0},\rho(P)}\quad | \quad
x_{n}>Z(x')
  \},
\end{equation}
where $Z:B'_{\frac{\rho_{0}}{M_0}}\to\R$ is a Lipschitz function satisfying
\eqref{bordo_lip2}--\eqref{bordo_lip3}.
\end{defin}
We shall also need an assumption on $\Sigma$ in order to control
\emph{from below} its smallness.
\begin{defin}
  \label{def:size_Sigma}
We shall say that $\Sigma$ has \emph{size at least} $\rho_1$, $0<\rho_1\leq\rho_0$, if there
exists at least one point $P\in\Sigma$ such that
\begin{equation}
   \label{rho(P)_big}
  \rho(P)\geq\rho_1.
\end{equation}
%
\end{defin}

The elliptic operators that we shall consider are of the following form
\begin{equation}
\mathcal{L}u=\mathrm{div}\left( A\nabla u\right) +cu  \label{Int16}
\end{equation}
where $A=A(x)=\left\{ a_{ij}(x)\right\}$, $x\in\mathbb{R}^n$,
is a real-valued symmetric $n\times n$ matrix
such that its entries are measurable and it satisfies, for a given constant $K\geq 1$,
the ellipticity condition
\begin{equation}
    \label{ellipticity}
    K^{-1}|\xi|^2\leq
A(x)\xi\cdot\xi\leq K |\xi|^2,\quad\hbox{for almost
every }x\in \mathbb{R}^n,\text{ for every }\xi\in{\R}^n.
\end{equation}
Furthermore, when $n\geq 3$, we also assume that, for a given constant $L>0$,
the following Lipschitz continuity holds
\begin{equation}
    \label{lipschitz}
    |A(x)-A(y)|\leq \frac{L}{\rho_0}|x-y|,\quad\hbox{for
every }x,y\in \mathbb{R}^n.\
\end{equation}
Concerning the zero order term, we assume that $c\in L^{\infty}(\mathbb{R}^n)$ with
\begin{equation}
    \label{c_bound}
    \|c\|_{L^\infty(\R^n)}\leq \frac{\kappa}{\rho_0^2}.
\end{equation}

Here and for the rest of this paper we shall assume that $A$ is a symmetric matrix of coefficients satisfying the ellipticity condition \eqref{ellipticity} and such that,
if $n\geq 3$, it also satisfies the Lipschitz condition \eqref{lipschitz}. We emphasize that, in all the following statements, whenever a constant is said to depend on $L$ (among
other quantities) it is understood that such dependence occurs \emph{only} when $n\geq 3$.

We remark once and for all that also first order terms with bounded coefficients
could be added to the treatment with little additional effort. We have
chosen to confine ourselves to the above variational structure (\ref{Int16})
because we believe that (\ref{Int16}) provides a sufficiently wide and useful
setting for enough applications. We insist however that we allow the
presence of the zero order term, with no sign, nor smallness, condition on
the coefficient $c$ because of the importance of the applications to wave
phenomena at a fixed wavenumber.

Let us now introduce the rigorous weak formulation of the Cauchy
problem that we shall use. First it is necessary to introduce some
further notation and some function spaces.

The solution $u$ will be assumed to belong to the $H^{1}\left(
\Omega \right) $ space and the following \emph{a-priori bound}
will be prescribed
\begin{equation}
\left\Vert u\right\Vert _{H^{1}\left( \Omega \right) }\leq E,
\end{equation}
for some given $E>0$, for the purpose of a \emph{global stability
estimate}, whereas for the \emph{stability in the interior} we
shall more simply require that, given $E_0>0$,
\begin{equation}
\left\Vert u\right\Vert _{L^{2}\left( \Omega \right) }\leq
E_0\text{.}
\end{equation}

We shall fix $\Sigma $ as an open connected portion of $\partial
\Omega $ and we shall always assume it to be Lipschitz.

We shall consider as test functions space the space
$H^1_{co}(\Omega\cup\Sigma)$ consisting of the functions
$\varphi\in H^1(\Omega)$ having support compactly contained in
$\Omega\cup\Sigma$. We denote by $H^{\frac{1}{2}}_{co}(\Sigma)$
the class of $H^{\frac{1}{2}}(\Sigma)$ traces of functions
$\varphi\in H^1_{co}(\Omega\cup\Sigma)$. We then define
$H^{-\frac{1}{2}}(\Sigma)$ as the dual space to
$H^{\frac{1}{2}}_{co}(\Sigma)$ based on the $L^2(\Sigma)$ dual
pairing.

The Cauchy data $g$, $\psi $ will be taken in their natural trace
spaces, namely
\begin{equation}
g\in H^{\frac{1}{2}}\left( \Sigma \right) \text{, }\psi \in
H^{-\frac{1}{2}}\left( \Sigma \right) \text{.}
\end{equation}
 As bounds on the Cauchy data we require
\begin{equation}
   \label{bound_g_psi}
\left\Vert g\right\Vert _{H^{\frac{1}{2}}\left( \Sigma \right)
}+\rho_0\left\Vert \psi \right\Vert _{H^{-\frac{1}{2}}\left(
\Sigma \right) }\leq \eta.
\end{equation}

To begin with, let us consider the following more or less standard
formulation of a Cauchy problem
\begin{equation}\label{senza F}
\left\{
\begin{array}{ll}
\mathrm{div}\left( A\nabla u\right) +cu=f, & \text{in }\Omega
\text{,} \\ u=g, & \text{on }\Sigma \text{,} \\ A\nabla u\cdot \nu
=\psi, & \text{on }\Sigma \text{,}
\end{array}
\right.
\end{equation}
where the right-hand side $f$ can be assumed in $L^{2}\left(
\mathbb{R}^{n}\right) $.

The corresponding \emph{rigorous weak formulation} of the Cauchy
problem would be to find $u\in H^{1}\left( \Omega \right) $ such
that $u_{\mid \Sigma }=g$ in the trace sense and
\begin{equation}\label{weakformsenzaF}
\int_{\Omega }\left( A\nabla u\cdot \nabla \varphi -cu\varphi
\right) =\int_{\Sigma }\psi \varphi -\int_{\Omega } f\varphi
 ,\quad\text{for every }\varphi \in
H^{1}_{co}\left( \Omega \cup\Sigma \right).
\end{equation}
Here, the integral $\int_{\Sigma }\psi \varphi $ is to be properly
interpreted as the dual pairing between $H^{-\frac{1}{2}}\left(
\Sigma \right) $ and $H_{co}^{\frac{1}{2}}\left( \Sigma \right) $.
Note that the right hand side in \eqref{weakformsenzaF} represents
a bounded linear functional over $H^1_{co}(\Omega\cup\Sigma)$. An
even more general bounded  functional over
$H^1_{co}(\Omega\cup\Sigma)$ could be written as follows
\begin{equation}\label{scriptF}
\mathcal{F}(\varphi)=\int_{\Sigma }\psi \varphi -\int_{\Omega
}\left( f\varphi -F\cdot \nabla \varphi \right) ,\quad\text{for every }\varphi \in
H^{1}_{co}\left( \Omega \cup\Sigma \right)
\end{equation}
where $F\in L^{2}\left( \mathbb{R}^{n}; \mathbb{R}^{n}\right)$ is
a given vector field with $L^{2}$ components. It will be
convenient for us to admit such type of right-hand side in our
formulation. Observe, however, that such a representation of
members of $\left(H^1_{co}(\Omega\cup\Sigma)\right)^{\ast}$ is
exceedingly non-unique (and indeed we shall fruitfully take
advantage of this non-uniqueness in
Theorem~\ref{theo:extension}).

Hence we shall prescribe
\begin{equation}\label{bound_fF}
\left\Vert f\right\Vert _{L^{2}\left( \mathbb{R}^{n}\right)
}+\frac{1}{\rho _{0}}\left\Vert F\right\Vert _{L^{2}\left(
\mathbb{R}^{n};\mathbb{R} ^{n}\right) }\leq \frac{\varepsilon
}{\rho _{0}^{2}}\text{,}
\end{equation}
for a given $\varepsilon>0$ and we shall formulate
 the \emph{rigorous weak formulation} of the Cauchy
problem as follows.

\begin{prob}[The weak formulation of the Cauchy problem]\label{weakCauchy}
To find
$u\in H^{1}\left( \Omega \right) $ such that $u_{\mid \Sigma }=g$
in the trace sense and
\begin{equation}\label{weakform}
\int_{\Omega }\left( A\nabla u\cdot \nabla \varphi -cu\varphi \right) =\int_{\Sigma }\psi \varphi -\int_{\Omega }\left( f\varphi
-F\cdot \nabla \varphi \right),\quad\text{for every }\varphi \in H^1_{co}(\Omega\cup\Sigma).
\end{equation}
\end{prob}
We remark that an \emph{abstract} interpretation of the above
stated problem could be expressed as follows

\begin{equation}\label{abstract}
\left\{
\begin{array}{ll}
\mathrm{div}\left( A\nabla u\right) +cu=\mathcal{F}, & \text{in }
\left(H^1_{co}(\Omega\cup\Sigma)\right)^{\ast} ,\\ u_{\mid \Sigma
}=g, & \text{in }H^{\frac{1}{2}}(\Sigma) ,
\end{array}
\right.
\end{equation}
where $\mathcal{F}$ is given by \eqref{scriptF}. Unfortunately,
this abstract formulation hides within its first equation a
boundary contribution which should express the Neumann condition
on $\Sigma$. Roughly speaking, such boundary contribution can be
detected,  if one \emph{formally} integrates by parts the terms
involving $\nabla \varphi$ appearing in \eqref{weakform}, and
\emph{formally} obtains
\begin{equation*}\label{conF}
\left\{
\begin{array}{ll}
\mathrm{div}\left( A\nabla u\right) +cu=f+ \mathrm{div}F, &
\text{in }\Omega \text{,} \\ u=g, & \text{on }\Sigma \text{,} \\
A\nabla u\cdot \nu =\psi+F\cdot \nu , & \text{on }\Sigma \text{,}
\end{array}
\right.
\end{equation*}
and it is evident that this formal expression incorporates as a
special case \eqref{senza F}. We reiterate, however, that the true
interpretation of the Cauchy problem that we shall use is given by
 Problem~\ref{weakCauchy}, but at the same time we emphasize that the full
strength of such a rigorous formulation shall be used at a single
step in our arguments only, and specifically in the above
mentioned Theorem~\ref{theo:extension}. In all remaining estimates
occurring in this paper we shall merely make use of the
\emph{interior} weak formulation of the elliptic equation
\begin{equation*}
\mathrm{div}\left( A\nabla u\right) +cu=f+ \mathrm{div}F ,
\end{equation*}
which can be viewed in the more customary sense of
$H^{-1}(\Omega)=\left(H^1_0(\Omega)\right)^{\ast}$.

Our main stability estimates are contained in the following theorems.

\begin{theorem}[Stability in the interior for the Cauchy problem]
    \label{theo:Cauchyinterior}
    Let $u\in H^1(\Omega)$ be a weak solution to the Cauchy Problem~\ref{weakCauchy}, where
    $\Sigma$ satisfies the conditions in Definition~\ref{def:Lipschitz_Sigma} and Definition~\ref{def:size_Sigma}, $f\in L^2(\R^n)$ and $F\in L^2(\R^n; \R^n)$ satisfy \eqref{bound_fF} and $g\in H^{\frac{1}{2}}\left( \Sigma \right)$, $\psi \in H^{-\frac{1}{2}}\left( \Sigma\right)$ satisfy \eqref{bound_g_psi}.
    There exists $\overline{h}$, $0<\overline{h}<\frac{\rho_1}{8M_0}$, with $\frac{\overline{h}}{\rho_0}$ only depending on $K$, $L$, $\kappa$, $M_0$ and $\frac{\rho_0}{\rho_1}$, such that, assuming the a-priori bound
\begin{equation}
  \label{bound_E0_stability}
  \|u\|_{L^2(\Omega)}\leq E_0,
\end{equation}
   then, for every $h$, $0<h\leq \overline{h}$, and for every connected open set $G\subset\Omega$
   such that
\begin{equation}
  \label{dist_G_bordo}
  \mathrm{dist}(G,\partial\Omega)\geq h,
\end{equation}
\begin{equation}
  \label{dist_P_G}
  \mathrm{dist}(P,G)<\frac{\rho_1}{8M_0},
\end{equation}
where $P\in\Sigma$ is the point appearing in Definition~\ref{def:size_Sigma}, we have
\begin{equation}
    \label{Cauchyinterior}
    \|u\|_{L^2(G)}\leq C(\varepsilon+\eta)^{\delta}
    (E_0+\varepsilon+\eta)^{1-\delta},
\end{equation}
where $C>0$ and $\delta\in (0,1)$ satisfy
\begin{equation}
   \label{Cdef_bis}
C= C_1\left(\frac{|\Omega|}{h^n}\right)^{\frac{1}{2}}
\end{equation}
and
\begin{equation}
\label{deltadef_bis}
\delta\geq \alpha^{\frac{C_2|\Omega|}{h^n}}
\end{equation}
with $\alpha\in (0,1)$ only depending on $K$, $L$ and $\kappa$, $C_1$ only depending on $K$, $L$, $\kappa$, $M_0$ and $\frac{\rho_0}{\rho_1}$, and
$C_2$ only depending on $K$.
\end{theorem}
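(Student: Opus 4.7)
The plan follows the four-step scheme of Subsection~\ref{Subsect1.6}. I would first extend $u$ across the Lipschitz portion $\Sigma$ by invoking Theorem~\ref{theo:extension} (together with Remark~\ref{rem:extension}): choose an open set $\mathcal{A}$ attached to $\Omega$ through the part of $\Sigma$ adjacent to the distinguished point $P$ of Definition~\ref{def:size_Sigma}, and build an extension $\widetilde{u}\in H^{1}(\widetilde{\Omega})$ on $\widetilde{\Omega}=\overset{\circ}{\overline{\Omega\cup\mathcal{A}}}$ with $\widetilde{u}=u$ in $\Omega$, satisfying in the interior weak sense an equation $\mathrm{div}(\widetilde{A}\nabla\widetilde{u})+\widetilde{c}\,\widetilde{u}=\widetilde{f}+\mathrm{div}\widetilde{F}$ on $\widetilde{\Omega}$, with $\widetilde{A},\widetilde{c}$ satisfying \eqref{ellipticity}--\eqref{c_bound} (possibly with rescaled constants) and $\|\widetilde{f}\|_{L^{2}(\widetilde{\Omega})}+\rho_{0}^{-1}\|\widetilde{F}\|_{L^{2}(\widetilde{\Omega})}\leq C(\varepsilon+\eta)/\rho_{0}^{2}$. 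The construction can be arranged so that, inside $\mathcal{A}\setminus\overline{\Omega}$ and near $P$, there is a ball $B_{r_{0}}(x_{0})$ of radius $r_{0}$ of order $\rho_{1}/M_{0}$ on which $\|\widetilde{u}\|_{L^{2}(B_{r_{0}}(x_{0}))}\leq C(\varepsilon+\eta)$, while globally $\|\widetilde{u}\|_{L^{2}(\widetilde{\Omega})}\leq C(E_{0}+\varepsilon+\eta)$.

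With $\widetilde{u}$ in hand, the second step is to apply Theorem~\ref{theo:PSinterior} to $(\widetilde{\Omega},\widetilde{u})$, using $B_{r_{0}}(x_{0})$ as the reference small ball and $G$ as the target set. The geometric assumptions $h\leq\overline{h}<\rho_{1}/(8M_{0})$, $\mathrm{dist}(G,\partial\Omega)\geq h$, and $\mathrm{dist}(P,G)<\rho_{1}/(8M_{0})$ ensure that $G$ lies at a distance of order $h$ from $\partial\widetilde{\Omega}$ and that $G$ is joined to $B_{r_{0}}(x_{0})$ inside $\widetilde{\Omega}$ by a connected region which can be covered by a chain of balls of radius comparable to $h$ on which the inhomogeneous three-spheres inequality of Theorem~\ref{cor:3spheres_nonhomogeneous_2} is applicable. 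Iterating the three-spheres inequality along this chain propagates the smallness of $\widetilde{u}$ from $B_{r_{0}}(x_{0})$ to $G$: each step contracts the H\"older exponent by a factor $\alpha\in(0,1)$ depending only on $K,L,\kappa$, and a volume-covering argument bounds the number of steps by $C_{2}|\Omega|/h^{n}$ with $C_{2}$ depending only on $K$. This produces the exponent $\delta\geq\alpha^{C_{2}|\Omega|/h^{n}}$ of \eqref{deltadef_bis} and, after collecting the constants accumulated along the chain, the multiplicative factor $C_{1}(|\Omega|/h^{n})^{1/2}$ of \eqref{Cdef_bis}. Restricting the resulting bound on $\widetilde{u}$ to $G\subset\Omega$ yields \eqref{Cauchyinterior}.

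The main obstacle is hidden in the extension step. The Neumann datum $\psi$ lives only in $H^{-1/2}(\Sigma)$, which is too weak for a naive lifting; this is precisely why the weak formulation has been enlarged in \eqref{scriptF} to include the $L^{2}$ vector field $F$, whose non-unique representation lets one re-encode the boundary contribution $\int_{\Sigma}\psi\varphi$ as a divergence term $\int F\cdot\nabla\varphi$ with $L^{2}$ control of $F$. Granted the extension, the remainder of the proof is the quantitative bookkeeping of a chain of three-spheres inequalities already packaged by Theorem~\ref{theo:PSinterior}, whose structure dictates the precise dependence of $C$ and $\delta$ on $|\Omega|/h^{n}$ and explains why the stability rate in the interior is of H\"older type, in agreement with the Hadamard example discussed in Subsection~\ref{Subsect 1.1}.
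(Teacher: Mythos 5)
Your proposal is correct and follows essentially the same route as the paper: extend $u$ across $\Sigma$ near $P$ via Theorem~\ref{theo:extension} (exploiting the non-unique representation \eqref{scriptF} and the $H^{1}$ lifting of $g$ to get a ball in $\mathcal{A}$ where $\widetilde{u}$ is of size $\eta$), then apply the propagation of smallness of Theorem~\ref{theo:PSinterior} in the augmented domain. The only detail you leave implicit is that Theorem~\ref{theo:PSinterior} requires the target set to contain $B_{r_{0}/2}(x_{0})$, so one must replace $G$ by the connected enlargement $\widetilde{G}=G\cup\Gamma_{\frac{\rho_{1}}{8M_{0}},\frac{\rho_{1}}{8}}$ (this is where hypothesis \eqref{dist_P_G} enters); your chain-of-balls description captures this in substance.
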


\begin{remark}
Let us observe that stability with a H\"older rate could be obtained also when the connected subset $G$ of $\Omega$ is allowed to touch the boundary portion $\Sigma$,
while remaining at a positive distance from its complement $\Sigma'$. This fact will turn out to be evident from the proof, see also Remark~\ref{rem:Holder_stability_extension}.
We have chosen the present formulation because in this way a more effective evaluation
of the constants $C$ and $\delta$ is obtained in terms of the parameter $h$, which controls the distance of $G$ from the whole boundary $\partial \Omega$. It would also be possible to obtain analogous results in terms of the distance of $G$ from
$\Sigma'=\partial\Omega\setminus\Sigma$ at the price of assuming more information on the shape and regularity of the boundary of $\Sigma$ within $\partial\Omega$.
\end{remark}

\begin{theorem}[Global stability for the Cauchy problem]
    \label{theo:Cauchy_global}
    Let $\Omega$ be a connected open set of Lipschitz class according to Definition~\ref{def:Lipschitz_boundary}.
    Let $u\in H^1(\Omega)$ be a weak solution to the Cauchy Problem~\ref{weakCauchy}, where
     $f\in L^2(\R^n)$ and $F\in L^2(\R^n; \R^n)$ satisfy \eqref{bound_fF} and $g\in H^{\frac{1}{2}}\left( \Sigma \right)$, $\psi \in H^{-\frac{1}{2}}\left( \Sigma\right)$ satisfy \eqref{bound_g_psi}.
    If $u$ satisfies the a-priori assumption
\begin{equation}
  \label{bound_E_stability}
  \|u\|_{H^1(\Omega)}\leq E,
\end{equation}
   then
\begin{equation}
    \label{Cauchyglobal}
    \|u\|_{L^2(\Omega)}\leq
    (E+\varepsilon+\eta)\omega\left(\frac{\varepsilon+\eta}{E+\varepsilon+\eta}\right),
\end{equation}
where
\begin{equation}
    \label{omega_bis}
    \omega(t)\leq \frac{C}{\left(\log\frac{1}{t}\right)^\mu},\quad \hbox{for  }t<1.
\end{equation}
where $C>0$ and $\mu$, $0<\mu<1$, only depend on $K$, $L$, $\kappa$, $M_0$, $\frac{\rho_0}{\rho_1}$ and
$\frac{|\Omega|}{\rho_0^n}$.
\end{theorem}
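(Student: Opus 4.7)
The plan is to follow the last two steps of the scheme outlined in Subsection~\ref{Subsect1.6}. Namely, I would first extend the solution $u$ across the Cauchy portion $\Sigma$ so that the extended function $\widetilde u$ weakly solves an inhomogeneous elliptic equation on a larger Lipschitz domain $\widetilde\Omega\supset\Omega$, with right-hand side controlled quantitatively by the Cauchy data; then I would apply the global propagation of smallness (Theorem~\ref{theo:PSglobal}, together with Remark~\ref{rem:loglog_PS}) to $\widetilde u$ on $\widetilde\Omega$.

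For the first step, I would invoke Theorem~\ref{theo:extension} on the weak solution $u$ of Problem~\ref{weakCauchy}. This produces an open set $\mathcal{A}$ adjacent to $\Omega$ through $\Sigma$ and a function $\widetilde u\in H^1(\widetilde\Omega)$, with $\widetilde\Omega = \overset{\circ}{\overline{\Omega\cup\mathcal{A}}}$, such that $\widetilde u|_\Omega = u$ and
\[
\mathrm{div}(A\nabla\widetilde u) + c\widetilde u = \widetilde f + \mathrm{div}\widetilde F \quad \text{weakly in }\widetilde\Omega,
\]
with $\|\widetilde f\|_{L^2(\widetilde\Omega)} + \rho_0^{-1}\|\widetilde F\|_{L^2(\widetilde\Omega)}$ bounded by a constant multiple of $(\varepsilon+\eta)/\rho_0^2$. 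By the variant in Remark~\ref{rem:extension}, the construction can be arranged so that $\widetilde\Omega$ is of Lipschitz class with constants controlled by $\rho_0$ and $M_0$, and so that there is a ball $B_{r_0}(x_0)\subset\mathcal{A}$ of radius comparable to $\rho_0$ on which $\widetilde u$ is controlled solely by $\eta$. The a-priori $H^1$-bound on $u$ and the added lifting near $\Sigma$ together give $\|\widetilde u\|_{H^1(\widetilde\Omega)}\leq C(E+\varepsilon+\eta)$.

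Having $\widetilde u$ at hand, I would apply Theorem~\ref{theo:PSglobal} on $\widetilde\Omega$, starting from the smallness on $B_{r_0}(x_0)$ and the quantitative bounds on the inhomogeneous data. Remark~\ref{rem:loglog_PS} guarantees a modulus of continuity of type $\omega(t)\leq C(\log(1/t))^{-\mu}$ with $C>0$ and $\mu\in(0,1)$ depending only on $K,L,\kappa,M_0,\rho_0/\rho_1$ and $|\Omega|/\rho_0^n$. This yields
\[
\|\widetilde u\|_{L^2(\widetilde\Omega)} \leq (E+\varepsilon+\eta)\,\omega\!\left(\frac{\varepsilon+\eta}{E+\varepsilon+\eta}\right),
\]
and restricting to $\Omega\subset\widetilde\Omega$ gives exactly \eqref{Cauchyglobal}.

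The main obstacle is already packaged inside Theorem~\ref{theo:extension}: one has to construct, in a collar $\mathcal{A}$ outside $\Omega$ abutting $\Sigma$, a function $\widetilde u$ whose trace matches $g$ from the $\mathcal{A}$ side and whose conormal derivative absorbs the prescribed flux $\psi$, so that the weak equation on $\widetilde\Omega$ closes with $(\widetilde f,\widetilde F)$ produced linearly from $(f,F,g,\psi)$, all while preserving quantitative Lipschitz regularity of $\widetilde\Omega$. Once that package is in place, the logarithmic rate transfers directly from the propagation of smallness and no further hard analysis is required at this final stage; only careful bookkeeping of how the constants from Theorem~\ref{theo:extension} and Theorem~\ref{theo:PSglobal} combine into the dependence on $K,L,\kappa,M_0,\rho_0/\rho_1,|\Omega|/\rho_0^n$ is needed.
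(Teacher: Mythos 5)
Your proposal follows essentially the same route as the paper's proof: extend $u$ to $\widetilde u$ on the augmented Lipschitz domain $\widetilde\Omega$ via Theorem~\ref{theo:extension}, note the smallness of $\widetilde u$ on the ball $B_{r_0}(x_0)\subset\mathcal{A}$ together with the $H^1$-bound $\|\widetilde u\|_{H^1(\widetilde\Omega)}\leq C(E+\eta)$, and then invoke Theorem~\ref{theo:PSglobal} to obtain the logarithmic modulus. One small correction: the modulus $\omega(t)\leq C(\log(1/t))^{-\mu}$ comes directly from Theorem~\ref{theo:PSglobal} itself, not from Remark~\ref{rem:loglog_PS}, which instead concerns the weaker log--log estimate used in Section~\ref{sec: stability_loglog} where no global Lipschitz regularity of $\Omega$ is assumed.
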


According to the scheme already illustrated in Subsection~\ref{Subsect1.6} our first
step will be the proof of a three-spheres inequality. In fact our basic
building brick will be the following

\begin{theorem}[Three-spheres inequality]
   \label{cor:3spheres_nonhomogeneous_2}
Let $u\in H^1(B_R)$ be a weak solution to the inhomogeneous elliptic equation
\begin{equation}
    \label{div_equation_fF_tris}
    \divrg(A\nabla u)+cu=f+\divrg F, \quad\hbox{in } B_R,
\end{equation}
where
$f\in L^2(\R^n)$ and $F\in L^2(\R^n; \R^n)$ satisfy \eqref{bound_fF}. Then, for
every $r_1$, $r_2$, $r_3$, with $0<r_1<r_2<r_3\leq R$,
\begin{equation}
    \label{3spheres_nonhomogeneous_2}
    \|u\|_{L^2(B_{r_2})}\leq C \left(\|u\|_{L^2(B_{r_1})}+\varepsilon\right)^\alpha
    \left(\|u\|_{L^2(B_{r_3})}+\varepsilon\right)^{1-\alpha},
\end{equation}
where $C>0$ and $\alpha$, $0<\alpha<1$, only depend on $K$, $L$, $\kappa$, $\max\left\{\frac{R}{\rho_0},1\right\}$,
$\frac{r_2}{r_1}$ and $\frac{r_3}{r_2}$.
\end{theorem}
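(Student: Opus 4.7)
The plan is to follow the frequency-function method of Kukavica \cite{kukavica}, modified to accommodate the zero-order term $cu$ and the distributional right-hand side $f+\divrg F$. First I would normalize by reducing to the case $A(0)=I$: in dimension $n\geq 3$ this is done by a Lipschitz change of coordinates using \eqref{lipschitz}, while in dimension $n=2$ a quasiconformal (isothermal) change of variables reduces the principal part to the Laplacian without any regularity assumption on $A$. In both cases the coordinate change distorts $L^2$-norms on concentric balls by bounded factors depending only on the structural constants, so it suffices to prove the inequality in the normalized setting.

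After normalisation, introduce for $r\in(0,R]$ the regularised frequency function
\begin{equation*}
H(r)=\int_{\partial B_r}\mu\,u^2\,d\sigma+\varepsilon^2,\qquad D(r)=\int_{B_r}A\nabla u\cdot\nabla u\,dx,\qquad N(r)=\frac{rD(r)}{H(r)},
\end{equation*}
with $\mu(x)=\langle A(x)x,x\rangle/|x|^2$ the conormal weight adapted to $A$. The additive $\varepsilon^2$ is essential: it is what forces, and absorbs, the $+\varepsilon$ terms in the conclusion. Differentiating $H$ and $D$ and using the equation through a Rellich--Pohozaev-type identity, the contributions of $cu$ and $f$ appear as lower-order perturbations easily controlled by Cauchy--Schwarz. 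The term $\divrg F$ is more delicate: one integrates by parts $\int_{B_r}u\,\divrg F=\int_{\partial B_r}uF\cdot\nu-\int_{B_r}\nabla u\cdot F$, obtaining an interior contribution bounded by $\|F\|_{L^2}D(r)^{1/2}$ and a boundary contribution bounded by $\|F\|_{L^2(\partial B_r)}H(r)^{1/2}$. Combining these yields a differential inequality of the form
\begin{equation*}
(\log N(r))'\geq -C(1+N(r)),\qquad\text{for a.e. }r\in(0,R],
\end{equation*}
with $C$ depending only on $K$, $L$, $\kappa$ and $\max\{R/\rho_0,1\}$. Hence $e^{-Cr}N(r)$ is almost monotone, so $\log r\mapsto\log H(r)$ is almost convex.

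From this convexity one obtains $H(r_2)\leq C_0\,H(r_1)^{\alpha}H(r_3)^{1-\alpha}$ with $\alpha\in(0,1)$ determined by $r_2/r_1$ and $r_3/r_2$. A final averaging of the radii over concentric rings, combined with a Caccioppoli estimate to control the interior $D$-contributions, converts surface quantities to volume quantities and yields \eqref{3spheres_nonhomogeneous_2}. \textbf{Main obstacle.} The hardest point is that $\divrg F$ has the same order as the principal part $\divrg(A\nabla u)$ and therefore cannot be treated as a lower-order term: its boundary and interior contributions in the derivative of $N(r)$, once integrated in $r$, must be shown to be dominated by the built-in $\varepsilon^2$ rather than by $H(r)$ or $D(r)$ themselves. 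This is what underlies the precise form of the inequality, with $\varepsilon$ appearing additively inside each $L^2$-norm on the right-hand side of \eqref{3spheres_nonhomogeneous_2}.
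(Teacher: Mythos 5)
Your proposal takes a substantially different route from the paper, and it has two genuine gaps.

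First, the paper does \emph{not} absorb $\varepsilon$ into a regularized frequency function. It proves the three-spheres inequality only for the homogeneous, pure-principal-part equation by the Almgren/Kukavica monotonicity argument (Theorem~\ref{theo:3spheres}), then incorporates the zero-order term $cu$ by factoring $u=wv$ with $w>0$ a bounded positive solution of the same homogeneous equation so that $v$ solves a new pure-principal-part equation with comparable ellipticity (Lemmas~\ref{lem:3spheres_zero_order1}--\ref{lem:3spheres_zero_order2}), and finally handles $f+\divrg F$ by \emph{linear superposition}: let $u_0$ solve the zero-Dirichlet problem in $B_{R_0}$ with the same right-hand side, bound $\|u_0\|_{L^2(B_{R_0})}\leq C\varepsilon$, and apply the homogeneous three-spheres inequality to $u-u_0$ (Theorem~\ref{cor:3spheres_nonhomogeneous}). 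This completely sidesteps the issue you flag as the ``main obstacle.'' You are right that $\divrg F$ is of the same order as the principal part, so it is not evident that the interior contribution to $N'(r)$ is dominated by $CN(r)$ even after the $\varepsilon^2$ regularization; you state the difficulty but do not resolve it, and without an actual estimate showing that the $F$-terms only produce an admissible drift in $(\log N)'$, the claimed differential inequality is not established.

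Second, and independently, the normalization $A(0)=I$ by the linear change of variables $y=\sqrt{A^{-1}(0)}\,x$ distorts balls into ellipsoids with $B_{r/\sqrt K}\subset\mathcal E_r\subset B_{\sqrt K r}$, so transporting a three-balls inequality back to the original variables inevitably introduces a gap $r_2<r_3/K$ between the middle and outer radii (and then $r_2<r_3/(4K)$ after the zero-order factorization). Your proposal claims the conclusion for all $0<r_1<r_2<r_3\leq R$ but never explains how this radius constraint is removed; ``averaging over concentric rings'' and a Caccioppoli estimate do not achieve this. The paper removes the restriction by an entirely different mechanism: Theorem~\ref{cor:3spheres_nonhomogeneous_2} is proved as a corollary of the propagation-of-smallness result (Theorem~\ref{theo:PSinterior}), which chains the \emph{restricted} three-spheres inequality along a finite sequence of overlapping balls inside $B_{r_3}$; taking $B_{r_0}(x_0)=B_{r_1}$, $G=B_{r_2}$, $\Omega=B_{r_3}$ in that theorem gives exactly \eqref{3spheres_nonhomogeneous_2} with no constraint on the radii. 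Your proposal lacks any analogue of this chaining step.

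In short: the paper's argument is modular (homogeneous principal part with restricted radii $\to$ zero-order by factorization $\to$ inhomogeneous right-hand side by superposition $\to$ unrestricted radii by iterated three-spheres over a chain of balls), whereas your single-shot regularized-frequency-function argument would need both the unresolved $\divrg F$ estimate and a mechanism to kill the $K$-dependent radius restriction. As written, both are missing.
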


Our strategy for the proof of such an inequality breaks down in
the following steps.

\begin{enumerate}[a)]
\item First we prove a three-spheres inequality for the homogeneous
equation in pure principal part (Theorem~\ref{theo:3spheres_generale}) with some limitations on the radii, namely
$0<r_1<r_2<r_3/K\leq r_3\leq R$, when $n\geq 3$.
In two dimensions, a three-spheres (in fact circles) inequality may be obtained, with a
different technique, without the limitations on the radii and with a
possibly discontinuous coefficient matrix $A$ (Theorem~\ref{threespheresharmteoL2}).

\item Next we adapt the proof to operators also containing the zero
order term (Theorem~\ref{cor:3spheres_zero_order}).

\item We include the presence of the right-hand side in the equation (Theorem~\ref{cor:3spheres_nonhomogeneous}).

\item \label{step_d}  We remove the limitations on the radii, finally obtaining the above stated Theorem~\ref{cor:3spheres_nonhomogeneous_2}.
\end{enumerate}

We shall also consider estimates of propagation of smallness, in Section~\ref{sec: PS}. The main results will be
Theorem~\ref{theo:PSinterior} (propagation in the
interior) and Theorem~\ref{theo:PSglobal} (global propagation).
In fact, Theorem~\ref{theo:PSinterior} is obtained as a consequence of the
three-spheres inequality with restrictions on the radii, Theorem~\ref{cor:3spheres_nonhomogeneous}, and the above mentioned step~\ref{step_d}) is obtained as a special case of
Theorem~\ref{theo:PSinterior}.

In Section~\ref{sec: stability}, we prove the
extension argument (Theorem~\ref{theo:extension}) which enables to apply the estimates of
propagation of smallness (Theorems~\ref{theo:PSinterior} and
\ref{theo:PSglobal}) to complete the
proofs of Theorems~\ref{theo:Cauchyinterior} and \ref{theo:Cauchy_global}.

Finally, we shall address the problem of global stability when no regularity assumption
is available on the domain $\Omega$. This situation in fact often occurs in inverse problems with unknown boundaries
\cite{alesscrack93,ABRVpisa,alerondiSIAM,rondiNTA,CRV,vessella,morross1,morross2}.
If two solutions of the same equation have different domains of definition, then
their difference solves an equation in the intersection of the two domains. Such an intersection may be, in principle, highly nonsmooth, even if the starting domains satisfy some a-priori regularity assumptions. It is then useful to derive a preliminary, maybe very weak, global stability for such difference of solutions in their common domain of definition (or at least in one of its connected components). We believe that
the present formalization of this argument, expressed in Theorem~\ref{theo:Cauchyglobal_loglog}, might be a useful tool for future use in other inverse problems with unknown boundaries. Of course some assumptions will be needed anyhow. The basic one that we shall use on $\Omega$ is rather weak and can be summarized as follows. We shall assume that there exists a family $\{G_h\}$
of connected subsets invading $\Omega$ such that the measure of the difference
$|\Omega\setminus G_h|$ is controlled by a given power $h^\vartheta$
of the distance $h$ of $G_h$ from $\partial\Omega$. It may be easy to predict that, with such weak hypotheses, only a weak result can be obtained. In fact we are able to achieve only a stability of \emph{log--log}-type.


\subsection{Concluding remarks} We believe that the above theorems provide
optimal results of stability for the Cauchy problem. The optimality is
achieved in many respects.

\begin{enumerate}[1)]
\item The regularity assumptions on the coefficients of the principal part are
minimal, Lipschitz continuity when $n\geq 3$ and $L^{\infty }$ when $n=2$.
In fact these are known to be optimal conditions for uniqueness.

\item The regularity assumptions on the boundary are kept to a minimum. In
fact assuming Lipschitz regularity of the portion $\Sigma $ of $\partial
\Omega $ where the Cauchy data are assigned seems to be a nearly minimal
condition, just in order to give sense to the Cauchy problem. On the other
hand, when dealing with global stability, some conditions on the boundary,
which may ensure the uniform reachability of the boundary points from the
interior, seem necessary and Lipschitz regularity appears to be enough
general and at the same time meaningful in terms of applications. More
general reachability conditions could be considered, such as the NTA
corkscrew condition of Jerison and Kenig \cite{jerisonkenigNTA}. Let us mention in this
direction the related investigation by Rondi \cite{rondiNTA}.

\item The Cauchy data are evaluated in their natural spaces. This is indeed a
slight improvement with respect to previous studies where typically $
H^1(\Sigma) $-norm on $g$ and $L^2(\Sigma) $
-norm on $\psi $\ are considered, see Payne \cite{payne70}, Isakov \cite{isakovlib2},
Takeuchi and Yamamoto \cite{takyamamoto}.

\item As already discussed, the stability rates obtained have an optimal
character. The quantities in the stability estimates which might require
further investigation are the constants and the exponents (for instance $C$
and $\delta $ in Theorem~\ref{theo:Cauchyinterior} and $C$ and $\mu $ in Theorem~\ref{theo:Cauchy_global}). It would be
interesting, although possibly rather difficult, to simultaneously optimize
such pairs of quantities with respect to the geometry and also with respect to the
coefficients of the equation.
In connection to this issue we wish to mention
some remarkable results by Hrycak and Isakov \cite{isakovhry} and
Subbarayappa and Isakov \cite{isakovsub} who
have considered the Helmholtz equation $\Delta u+k^{2}u=0$ and proved that
the
stability estimates improve as $|k|$ increases, in the sense that, as $|k|
\to \infty$, the logarithmic term becomes negligible, whereas a
H\"older term prevails. See also Isakov \cite{isakov2008} for a related
analysis with a more general equation.
\end{enumerate}

\bigskip

The plan of the paper is as follows. Section~\ref{sec: Kucaviza} contains the
proof of a three-spheres inequality for a homogeneous equation in pure principal part,
the main result being Theorem~\ref{theo:3spheres_generale}. In Section~\ref{2Dsec}
we investigate the two-dimensional case. We illustrate the connection with quasiconformal mappings in Proposition~\ref{strmfuncprop}. We introduce the notion of $\mathcal{L}_{A}$-harmonic measure, Definition~\ref{hrmmeasdef}, and we apply it
to stability estimates in the interior for Cauchy problems, Theorems~\ref{harmmeastechthm} and \ref{harmmeastechthm2}. Finally, we deduce various forms of three-spheres (circles) inequalities, Theorem~\ref{threespheresharmteo} and Theorem~\ref{threespheresharmteoL2} which will be the one we shall use in the rest of the paper (together with Theorem~\ref{theo:3spheres_generale}).
In Section~\ref{3spheresgensec} we adapt the previously obtained three-spheres inequalities in order to encompass equations containing the zero order term and an inhomogeneous right-hand side. The final result of this section is given in Theorem~\ref{cor:3spheres_nonhomogeneous}. Section~\ref{sec: PS} is devoted to estimates of propagation of smallness. In Theorem~\ref{theo:PSinterior} we prove the interior estimate. As we already mentioned, as a corollary we also derive the proof of
the above stated Theorem~\ref{cor:3spheres_nonhomogeneous_2}. Next, in Theorem~\ref{theo:PSglobal} we prove the global estimate. Section~\ref{sec:
stability} contains the proof of the two main stability theorems for the Cauchy problem, Theorems~\ref{theo:Cauchyinterior} and \ref{theo:Cauchy_global}. As we already illustrated, Section~\ref{sec:
stability_loglog} deals with a global stability estimate for the Cauchy problem when no regularity is assumed on $\Omega$, Theorem~\ref{theo:Cauchyglobal_loglog}.

\section{The basic three-spheres inequality, 
arbitrary dimension} \label{sec:
Kucaviza}

Let us begin by considering the homogeneous elliptic equation in
pure principal part
\begin{equation}
    \label{div_equation}
    \divrg(A\nabla u)=0, \quad\hbox{in } B_R,
\end{equation}
where
$A$ satisfies \eqref{ellipticity} and \eqref{lipschitz},
for given constants $K\geq 1$ and $L>0$.

\begin{theorem}[Three-spheres inequality -- pure principal part]
    \label{theo:3spheres_generale}
       If the above sta\-ted hypotheses hold, for
every $r_1$, $r_2$, $r_3$, with $0<r_1<r_2<\frac{r_3}{K}\leq r_3\leq R$,
\begin{equation}
    \label{3spheres_generale}
    \|u\|_{L^2(B_{r_2})}\leq Q \|u\|_{L^2(B_{r_1})}^\alpha \|u\|_{L^2(B_{r_3})}^{1-\alpha},
\end{equation}
where $Q\geq 1$ only depends on $K$, $L$, $\max\left\{\frac{R}{\rho_0},1\right\}$, and where
\begin{equation}
    \label{alpha_generale}
\alpha=\frac{\log\frac{r_3}{Kr_2}}{\log\frac{r_3}{Kr_2}+C\log\frac{Kr_2}{r_1}},
\end{equation}
with $C>0$ only depending on $K$, $L$ and $\max\left\{\frac{R}{\rho_0},1\right\}$.
\end{theorem}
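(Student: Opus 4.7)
The plan is to follow the frequency-function method as refined by Kukavica \cite{kukavica}, which the paper itself points to. The first preparatory step is to normalize the setting: rescale $x\mapsto x/R$ to reduce to $R=1$, then perform the affine change of variables $y = A(0)^{1/2} x$, producing a new coefficient matrix $\widetilde A$ with $\widetilde A(0)=I$ whose ellipticity and Lipschitz constants are still controlled by $K$, $L$ and $R/\rho_0$. Because $A(0)^{1/2}$ has eigenvalues in $[K^{-1/2},K^{1/2}]$, the image of a ball $B_r$ lies between $B_{K^{-1/2}r}$ and $B_{K^{1/2}r}$. This is precisely the source of the factor $K$ appearing in the hypothesis $r_2<r_3/K$ and of the shift $Kr_2$ inside the formula for $\alpha$: one needs a logarithmic margin to absorb the ellipsoidal distortion when translating back to the original geometry.

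The heart of the argument is the construction and estimation of the frequency functional. Set
$$H(r) = \int_{\partial B_r} \mu\, u^2\, dS, \qquad D(r) = \int_{B_r} \widetilde A \nabla u \cdot \nabla u\, dx,$$
where $\mu$ is the natural positive weight on $\partial B_r$ derived from $\widetilde A$ (for instance $\mu(x)=\widetilde A(x) x\cdot x/|x|^2$), and define
$$N(r) = \frac{r D(r)}{H(r)}.$$
Differentiating $H$ and $D$ under the integral sign, combining with the equation $\mathrm{div}(\widetilde A\nabla u)=0$, and invoking a Rellich--Pohozaev identity with the radial vector field $x$, one obtains bounds of the schematic form
$$H'(r) = \frac{2D(r)}{r} + O(1)\,H(r), \qquad D'(r) \geq \frac{2}{r}\int_{\partial B_r}\mu^{-1}(\widetilde A\nabla u\cdot \nu)^2\,dS - O(1)\,D(r),$$
with constants depending only on $K$, $L$ and $R/\rho_0$. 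The error terms absorb all lower-order contributions produced by $\nabla \widetilde A$, and this is exactly where the Lipschitz hypothesis on $A$ is used essentially. Together with the Cauchy--Schwarz inequality applied to the boundary integrals defining $D$ and $H$, they yield the almost-monotonicity
$$r\mapsto e^{Cr}N(r) \quad \text{is non-decreasing}.$$

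This almost-monotonicity is equivalent to $\log H(r)$ being convex in $\log r$ up to an additive linear correction in $r$. Integrating the corresponding differential inequality between $r_1$, $r_2$, $r_3$ yields a log-convexity estimate of the form
$$H(r_2) \leq e^{C(r_3-r_1)}\, H(r_1)^{\beta}\, H(r_3)^{1-\beta}, \qquad \beta = \frac{\log(r_3/r_2)}{\log(r_3/r_1)},$$
which is the surface-level three-spheres inequality. Integrating in $r$ and invoking a Caccioppoli-type inequality upgrades it to the solid-ball $L^2$ version, and the final step pulls back through the inverse affine change of variables to produce \eqref{3spheres_generale}. The main technical obstacle is the careful bookkeeping of the constants through these transformations: the affine normalization turns clean radii into ratios of nested ellipsoids, which effectively replaces $r_1, r_2, r_3$ by $K^{1/2}r_1$, $K^{1/2}r_2$, $K^{-1/2}r_3$ (hence the hypothesis $r_2 < r_3/K$ guaranteeing $K^{1/2}r_2 < K^{-1/2}r_3$), and a short computation then shows that the exponent $\beta$ computed with the transformed radii coincides with the $\alpha$ of \eqref{alpha_generale} once the exponential correction $e^{C(r_3-r_1)}$ is absorbed into the constant $Q$ by redefinition of $C$. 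Pli\v{s}'s counterexamples for H\"older coefficients in dimension $n\geq 3$ confirm that the Lipschitz hypothesis is exactly what makes the Rellich error terms benign enough to permit this scheme.
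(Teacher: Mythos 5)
Your proposal follows essentially the same route as the paper's: normalize $A(0)$ to the identity by an affine change of variables (the paper uses $y=\sqrt{A^{-1}(0)}\,x$, so the map should be $A(0)^{-1/2}$ rather than $A(0)^{1/2}$), prove almost-monotonicity of $e^{Cr/\rho_0}N(r)$ via the Payne--Weinberger generalization of the Rellich identity, integrate to get log-convexity of $H$, and pull back through the nested ellipsoids $B_{r/\sqrt K}\subset\mathcal{E}_r\subset B_{\sqrt K r}$, which is exactly the paper's source of the restriction $r_2<r_3/K$ and of the $Kr_2$ in \eqref{alpha_generale}. Apart from minor slips in your schematic identities (the correct form is $H'(r)=\tfrac{n-1}{r}H(r)+2D(r)+O(\rho_0^{-1})H(r)$, not $\tfrac{2D(r)}{r}+O(1)H(r)$), the superfluous Caccioppoli step (plain integration and H\"older suffice), and the unaddressed degeneracy $H(r)=0$ (the paper's argument that $r'=0$), this matches the proof of Theorems~\ref{theo:3spheres_generale} and \ref{theo:3spheres}.
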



\begin{remark}
It should be stressed here that, in the following sections, we shall not use
the full strength of Theorem~\ref{theo:3spheres_generale} with the
essentially explicit formula \eqref{alpha_generale} for the exponent of the
three-spheres inequality. Actually, we shall merely use the fact that
$\alpha$ only depends on $K$, $L$, $\max\left\{\frac{R}{\rho_0},1\right\}$,
$\frac{r_2}{r_1}$, $\frac{r_3}{r_2}$.We have just recorded here the
representation \eqref{alpha_generale} because it may be of independent
interest. We recall, in particular, that a three-spheres inequality
\eqref{3spheres_generale} with exponent in the form \eqref{alpha_generale}
actually implies the \emph{strong} unique continuation property in a rather
straightforward fashion. The argument is as follows. We fix $r_2, r_3$ and
we allow $r_1 \to 0$, in this case
\begin{equation*}
\alpha=\alpha(r_1)\geq \frac{C_1}{\log\frac{C_2}{r_1}}.
\end{equation*}
With no loss of generality we may assume also $ \|u\|_{L^2(B_{r_3})}\leq 1$.
If for every $N=1,2,\ldots$ we have $ \|u\|_{L^2(B_{r})} = O(r^N)$ as $r
\to 0$, then, for every  $N$, there exists $r'>0$ such that
\begin{equation*} \|u\|_{L^2(B_{r_1})}\leq \left( \frac{r_1}{C_2} \right)^N
, \text{ for every } r_1 < r' ,
\end{equation*}
consequently, by \eqref{3spheres_generale},
\begin{equation*}
\|u\|_{L^2(B_{r_2})}\leq Q \left( \frac{r_1}{C_2}\right)^{\frac{N
C_1}{\log\frac{C_2}{r_1}}} = Q e^{- C_1 N} \to 0, \text{ as } N
\to \infty.
\end{equation*}
See also
Morassi, Rosset and Vessella \cite{morrosves} where this aspect was
investigated in more depth.
\end{remark}

As a first step, we shall prove a three-spheres inequality under the following additional
normalization hypothesis
\begin{equation}
    \label{normalization}
    A(0)=Id.
\end{equation}
Given a weak solution $u\in H^1(B_R)$ to equation \eqref{div_equation}, let us define
\begin{equation}
    \label{mu}
    \mu(x)=\frac{A(x)x\cdot x}{|x|^2},
\end{equation}
\begin{equation}
    \label{H}
    H(r)=\int_{\partial B_r}\mu u^2.
\end{equation}

\begin{theorem}[Three-spheres inequality -- normalized case]
    \label{theo:3spheres}
    If the previously sta\-ted hypo\-the\-ses hold, for
every $r_1$, $r_2$, $r_3$, with $0<r_1<r_2<r_3\leq R$,
\begin{equation}
    \label{3spheres}
    H(r_2)\leq Q H(r_1)^\alpha H(r_3)^{1-\alpha},
\end{equation}
where $Q\geq 1$ only depends on $K$, $L$, $\max\left\{\frac{R}{\rho_0},1\right\}$, and where
\begin{equation}
    \label{alpha}
\alpha=\frac{\log\frac{r_3}{r_2}}{\log\frac{r_3}{r_2}+C\log\frac{r_2}{r_1}},
\end{equation}
with $C>0$ only depending on $K$, $L$ and $\max\left\{\frac{R}{\rho_0},1\right\}$.
\end{theorem}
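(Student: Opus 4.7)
The strategy is the frequency function method of Garofalo and Lin \cite{garofalolin1}, in the variant adapted by Kukavica \cite{kukavica} to operators with Lipschitz principal coefficients, as already previewed in Subsection~\ref{Subsect1.4}. The normalization \eqref{normalization} ensures that near the origin the coefficient matrix is a first-order perturbation of the identity, and the weight $\mu(x)$ appearing in $H(r)$ is precisely designed so that conormal boundary integrals factor cleanly through the divergence structure.

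First I would compute the derivatives of two auxiliary quantities. Set $D(r)=\int_{B_r}A\nabla u\cdot\nabla u$; integration by parts using $\mathrm{div}(A\nabla u)=0$ in $B_R$ gives the fundamental identity $D(r)=\int_{\partial B_r}u\,(A\nabla u\cdot\nu)$. Using the coarea formula together with the pointwise bound $|\mu(x)-1|\leq C(K)L|x|/\rho_0$, which follows from \eqref{normalization} and \eqref{lipschitz}, a direct computation yields
\[
H'(r)=\frac{n-1}{r}H(r)+2D(r)+E_1(r),\qquad
D'(r)\geq \frac{n-2}{r}D(r)+\frac{2}{r}\int_{\partial B_r}\frac{(A\nabla u\cdot\nu)^2}{\mu}+E_2(r),
\]
where the two error terms carry an explicit factor $L/\rho_0$ and are bounded by quantities of the form $(CL/\rho_0)(D(r)+H(r)/r)$, with $C$ depending only on $K$.

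Second, I would introduce the frequency function $N(r)=rD(r)/H(r)$. The Cauchy--Schwarz inequality on $\partial B_r$ with weight $\mu$, applied to the boundary representation of $D(r)$, yields
\[
D(r)^2\leq H(r)\int_{\partial B_r}\mu^{-1}(A\nabla u\cdot\nu)^2.
\]
Substituting this into the expressions for $H'$ and $D'$ obtained in the previous step and rearranging, one arrives at a differential inequality of the form
\[
\frac{d}{dr}\log\bigl(e^{C_0 r/\rho_0}(N(r)+1)\bigr)\geq 0,
\]
for a constant $C_0=C_0(K,L)$. That is, $N$ is \emph{almost nondecreasing} on $(0,R]$. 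Combined with the identity $(\log H)'(r)=(n-1)/r+2N(r)/r+O(L/\rho_0)$ extracted from the first step, this shows that the map $\log r\mapsto \log H(r)+C_1 r/\rho_0$ is convex up to a bounded additive correction; evaluating this almost-convexity at $\log r_1<\log r_2<\log r_3$ yields precisely \eqref{3spheres} with the exponent \eqref{alpha} and a constant $Q\geq 1$ depending only on $K$, $L$ and $\max\{R/\rho_0,1\}$.

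\textbf{Main obstacle.} The monotonicity of the frequency function is classical in the constant-coefficient case; here the difficulty is entirely quantitative. Each integration by parts produces perturbative terms of order $|A(x)-A(0)|=O(L|x|/\rho_0)$, and these must be absorbed either into the manifestly nonnegative Rellich-type boundary term (after the Cauchy--Schwarz step) or into the exponential correction $e^{C_0 r/\rho_0}$, without contaminating the final exponent \eqref{alpha}. The role of the normalization \eqref{normalization} is exactly to align the Euclidean radial direction with the $A$-conormal direction at the origin to first order, so that the positive Rellich term carries the correct $\mu^{-1}$ weight matching the $\mu$ appearing in $H$; an affine change of variables reducing the general case to this normalized one then affects constants only through $K$ and $L$, yielding Theorem~\ref{theo:3spheres_generale} from the present statement.
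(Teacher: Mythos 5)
Your proposal follows essentially the same route as the paper's proof: the frequency function $N(r)=rI(r)/H(r)$ with $I(r)=\int_{B_r}A\nabla u\cdot\nabla u$, the Rellich--Payne--Weinberger identity to bound $I'$ from below, Cauchy--Schwarz on $\partial B_r$ to obtain almost-monotonicity of $e^{Cr/\rho_0}N(r)$, and integration of $(\log H)'$ between the three radii to get the almost-log-convexity of $H$. The only point you leave implicit is the degenerate situation in which $I$ (hence $N$) vanishes on an initial interval $(0,r']$, so that the differential inequality for $N$ is not directly available there; the paper disposes of this by first proving the inequality for $r_1>r'$ and then applying the argument to $u-M$, $M$ being the constant value of $u$ on $B_{r'}$, to conclude $r'=0$.
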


\begin{remark}
Since $K^{-1}\leq \mu\leq K$, we have
\begin{equation}
    \label{3spheres_bis}
    \int_{\partial B_{r_2}}u^2\leq Q \left(\int_{\partial B_{r_1}}u^2\right)^\alpha
    \left(\int_{\partial B_{r_3}}u^2\right)^{1-\alpha},
\end{equation}
and, by integration and by H\"{o}lder inequality, we also obtain
\begin{equation}
    \label{3balls}
    \int_{ B_{r_2}}u^2\leq Q \left(\int_{ B_{r_1}}u^2\right)^\alpha
    \left(\int_{B_{r_3}}u^2\right)^{1-\alpha}.
\end{equation}
\end{remark}

Let us first derive Theorem~\ref{theo:3spheres_generale}
from Theorem~\ref{theo:3spheres}.

\begin{proof}[Proof of Theorem~\ref{theo:3spheres_generale}]
Let us introduce the change of variables
\begin{equation}
    \label{change_var}
y=Jx,
\end{equation}
where $J=\sqrt{A^{-1}(0)}$ and let us consider, for any $r>0$, the ellipsoid
\begin{equation}
    \label{E_r}
\mathcal{E}_r=\{x\in\R^n \ |\
 (A(0)^{-1} x)\cdot x<r^2\}.
\end{equation}
We have that $\mathcal{E}_r=J^{-1}(B_r)$ and
\begin{equation}
    \label{E_r_B_r}
B_{\frac{r}{\sqrt K}}\subset \mathcal{E}_r\subset B_{\sqrt K r}.
\end{equation}
The function $v(y)=u(J^{-1}y)$ satisfies the elliptic equation
\begin{equation}
    \label{div_equation_v}
    \divrg(\widetilde{A}\nabla v)=0, \quad\hbox{in } B_{\frac{R}{\sqrt K}},
\end{equation}
where $\widetilde{A}(y)=JA(J^{-1}y)J$. It is straightforward to verify that
\begin{equation}
    \label{ellipticity_tilde}
    K^{-2}|\xi|^2\leq
\widetilde{A}(y)\xi\cdot\xi\leq K^2 |\xi|^2,\quad\hbox{for
every }y\in \mathbb{R}^n,\xi\in{\R}^n,
\end{equation}
\begin{equation}
    \label{lipschitz_tilde}
    |\widetilde{A}(y_1)-\widetilde{A}(y_2)|\leq K^{\frac{3}{2}}\frac{L}{\rho_0}|y_1-y_2|,\quad\hbox{for
every }y_1, y_2\in \mathbb{R}^n,\
\end{equation}
\begin{equation}
    \label{normalization_tilde}
    \widetilde{A}(0)=Id.
\end{equation}
Therefore $\widetilde{A}$ satisfies the hypotheses of Theorem~\ref{theo:3spheres}.
Let $\rho_1=\frac{r_1}{\sqrt K}$, $\rho_2=\sqrt K r_2$, $\rho_3=\frac{r_3}{\sqrt K}$. Since
$0<r_1<r_2<\frac{r_3}{K}\leq r_3\leq R$, we have that $0<\rho_1<\rho_2<\rho_3\leq \frac{R}{\sqrt K}$,
and by \eqref{3balls} we have
\begin{equation}
    \label{3balls_v}
    \int_{ B_{\rho_2}}v^2\leq Q \left(\int_{ B_{\rho_1}}v^2\right)^\alpha
    \left(\int_{B_{\rho_3}}v^2\right)^{1-\alpha},
\end{equation}
where $Q\geq 1$ only depends on $K$, $L$, $\max\left\{\frac{R}{\rho_0},1\right\}$,
and where
\begin{equation}
    \label{alpha_balls}
\alpha=\frac{\log\frac{\rho_3}{\rho_2}}{\log\frac{\rho_3}{\rho_2}+C\log\frac{\rho_2}{\rho_1}},
\end{equation}
with $C>0$ only depending on $K$, $L$ and $\max\left\{\frac{R}{\rho_0},1\right\}$.
Coming back to the old variables we have
\begin{equation}
    \label{3ellipsoid}
    \int_{ \mathcal{E}_{\rho_2}}u^2\leq Q \left(\int_{ \mathcal{E}_{\rho_1}}u^2\right)^\alpha
    \left(\int_{\mathcal{E}_{\rho_3}}u^2\right)^{1-\alpha},
\end{equation}
and recalling \eqref{E_r_B_r}, the thesis follows.
\end{proof}

\bigskip

The proof of Theorem~\ref{theo:3spheres}   is essentially based on the following three lemmas.
\begin{lemma}
    \label{lem:H_AC}
The function $H(r)$ is absolutely continuous in $(0,R)$ and, for almost every $r\in (0,R)$,
\begin{equation}
    \label{H'}
    H'(r)=\frac{1}{r}\int_{\partial B_r}(-A\nu\cdot \nu+tr(A)+r\partial_{x_i}a_{ij}\nu_j)u^2
    +2\int_{\partial B_r}uA\nabla u\cdot \nu.
\end{equation}
\end{lemma}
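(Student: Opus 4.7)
The plan is to rewrite $rH(r)$ as a volume integral and then exploit the coarea formula; both the absolute continuity of $H$ and the precise form of $H'$ will fall out. The key observation is that on $\partial B_r$ one has $\mu = A\nu\cdot\nu$ and $x=r\nu$, so if we introduce the auxiliary vector field
\[
V(x) := A(x)\,x\,u(x)^2,
\]
then $V\cdot\nu = (Ax\cdot x)u^2/|x| = r\mu u^2$ on $\partial B_r$, hence
\[
\int_{\partial B_r} V\cdot\nu\,dS = r\,H(r).
\]

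Next I would check that $V\in W^{1,1}(B_R;\R^n)$ and compute $\divrg V$ by the Sobolev product rule. Since $A$ is Lipschitz (in dimension $n\ge 3$; $n=2$ is handled analogously with bounded $A$, using the Lipschitz regularity of $x\mapsto A(x)x$ not being needed because of the special structure of the argument below), and $u\in H^1(B_R)$ gives $u^2\in W^{1,1}$ with $\nabla(u^2)=2u\nabla u\in L^1$ by Cauchy--Schwarz, we obtain
\[
\divrg V = (\partial_i a_{ij})x_j\,u^2 + \mathrm{tr}(A)\,u^2 + 2u\,Ax\cdot\nabla u,
\]
with every summand in $L^1(B_R)$. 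The Gauss--Green formula for $W^{1,1}$-vector fields on the ball then yields, for every $r\in(0,R)$,
\[
r\,H(r) = \int_{B_r}\divrg V\,dx.
\]

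The right-hand side is manifestly absolutely continuous in $r$, as a primitive along radii of an $L^1$-function (by the coarea formula applied to $x\mapsto|x|$). Consequently $rH(r)$, and thus $H(r)$, is absolutely continuous on every compact subinterval of $(0,R)$, and for a.e.\ $r$,
\[
\frac{d}{dr}\bigl(rH(r)\bigr) = \int_{\partial B_r}\divrg V\,dS.
\]
To convert this into the claimed expression for $H'(r)$, I would restrict $\divrg V$ to $\partial B_r$ using $x=r\nu$ and the symmetry of $A$ (which gives $Ax\cdot\nabla u = rA\nu\cdot\nabla u = rA\nabla u\cdot\nu$), obtaining
\[
\int_{\partial B_r}\divrg V\,dS = \int_{\partial B_r}\bigl[r(\partial_i a_{ij})\nu_j + \mathrm{tr}(A)\bigr]u^2\,dS + 2r\int_{\partial B_r}u\,A\nabla u\cdot\nu\,dS.
\]
Finally, writing $rH'(r)=\frac{d}{dr}[rH(r)]-H(r)$, using $H(r)=\int_{\partial B_r}A\nu\cdot\nu\,u^2\,dS$, and dividing by $r$ produces the formula \eqref{H'}.

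The only genuinely delicate step is the Gauss--Green identity $rH(r)=\int_{B_r}\divrg V\,dx$ for \emph{every} $r$: this is what promotes absolute continuity from ``a.e.\ defined'' to a property of a pointwise representative, and it is handled by standard mollification of $V$ combined with the fact that, for a.e.\ $r$, the $L^1$-restriction of $V$ to $\partial B_r$ coincides with its trace. Notice that the equation $\divrg(A\nabla u)=0$ plays no role in this lemma; it enters only at the subsequent step where the identity for $H'$ is exploited to derive a Rellich-type identity.
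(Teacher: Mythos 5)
Your plan rests on exactly the same identity that the paper uses, namely
\[
rH(r)=\int_{\partial B_r}u^2\,Ax\cdot\nu=\int_{B_r}\divrg\bigl(u^2Ax\bigr),
\]
followed by differentiation in $r$, so the skeleton of the argument coincides with the paper's. Where the two proofs diverge is in how the regularity needed for the Gauss--Green step and the absolute continuity is secured. The paper invokes the interior elliptic regularity theorem (\cite[Theorem~8.32]{gilbargtrud}) to conclude $u\in C^{1,\alpha}_{loc}(B_R)$, after which $\divrg(u^2Ax)\in L^{\infty}_{loc}$ and both the divergence identity and the absolute continuity of $H$ are immediate. You instead stay at the $W^{1,1}$/$H^1$ level and deliberately avoid the equation, which is an interesting observation (the lemma really is a fact about $H^1$ functions once $A$ is Lipschitz). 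The price is that the identity $rH(r)=\int_{B_r}\divrg V\,dx$ for \emph{every} $r$, not merely a.e.\ $r$, becomes the crux, and your ``mollification plus a.e.\ restriction'' remark does not actually close this gap, since a.e.\ information on $r$ is precisely what is insufficient. A tighter route at your level of generality: approximate $u$ by $u_k\in C^\infty(\overline{B_{r'}})$ in $H^1(B_{r'})$ with $r<r'<R$, observe that the Gauss--Green identity is classical for each $u_k$, and note that both sides are continuous in $u$ with respect to the $H^1(B_{r'})$ topology (the surface integral via the trace embedding $H^1(B_r)\hookrightarrow L^2(\partial B_r)$, the volume integral because each summand of $\divrg V$ is controlled by $\|u\|_{H^1}^2$); the identity for every $r$ then follows by passing to the limit.

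Separately, your parenthetical about the two-dimensional case is incorrect. You write that for $n=2$ one can take $A$ merely bounded, but the formula \eqref{H'} explicitly contains $\partial_{x_i}a_{ij}$, so Lipschitz (or at least $W^{1,\infty}$) regularity of $A$ is indispensable here. In fact the paper assumes $A$ satisfies \eqref{lipschitz} throughout Section~\ref{sec: Kucaviza} for every dimension $n\geq 2$; the relaxation to merely measurable bounded coefficients in two dimensions is a feature of Section~\ref{2Dsec}, which uses an entirely different (quasiconformal) machinery and does not rely on this lemma.
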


\begin{proof}
We have that
\begin{multline}
    \label{H_AC}
    H(r)=\frac{1}{r}\int_{\partial B_r}u^2A(x)x\cdot\frac{x}{|x|}=
    \frac{1}{r}\int_{\partial B_r}u^2A(x)x\cdot\nu=\\
    =\frac{1}{r}\int_{B_r}\divrg(u^2A(x)x)=\frac{1}{r}\int_{0}^r ds \int_{\partial B_s}\divrg(u^2A(x)x),
\end{multline}
where $\nu$ denotes the outer unit normal to $B_r$.
Since, by well-known regularity results \cite[Theorem~8.32]{gilbargtrud}, $u\in C^{1,\alpha}_{loc}(B_R)$ and since $A\in C^{0,1}(B_R)$, we have that
$\divrg(u^2A(x)x)\in L^\infty(B_R)$ and therefore $H(r)$ is absolutely continuous in $(0,R)$. {}From \eqref{H_AC}, we have
\begin{multline}
    \label{H'_compute}
    H'(r)=-\frac{1}{r}H(r)+\frac{1}{r}\int_{\partial B_r}\divrg(u^2Ax)=\\
    =-\frac{1}{r}\int_{\partial B_r}\mu u^2+\frac{1}{r}\int_{\partial B_r}(tr(A))u^2
    +\frac{1}{r}\int_{\partial B_r}u^2\partial_{x_i}a_{ij}x_j+
    \frac{2}{r}\int_{\partial B_r}uu_{x_i}a_{ij}x_j,
\end{multline}
and, recalling that $A$ is symmetric, \eqref{H'} follows.
\end{proof}

\bigskip

Let us set
\begin{equation}
    \label{I}
    I(r)=\int_{\partial B_r}uA\nabla u\cdot \nu=\int_{B_r}A\nabla u\cdot \nabla u\geq 0.
\end{equation}

\begin{lemma}
    \label{lem:ineq_H'_I'}
There exists a positive constant $C$, only depending on $K$ and $L$, such that, for almost every $r\in (0,R)$,
\begin{equation}
    \label{ineq_H'}
    \left|H'(r)-\frac{n-1}{r}H(r)-2I(r)\right|\leq \frac{C}{\rho_0}H(r),
\end{equation}
\begin{equation}
    \label{ineq_I'}
    I'(r)\geq 2\int_{\partial B_r}\frac{1}{\mu}(A\nabla u\cdot\nu)^2+\frac{n-2}{r}I(r)-\frac{C}{\rho_0}I(r),
\end{equation}
\end{lemma}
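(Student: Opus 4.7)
The two inequalities are proved separately, the first essentially by direct computation and the second through a Rellich--Pohozaev identity.

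For \eqref{ineq_H'} I would start from the formula for $H'(r)$ in Lemma~\ref{lem:H_AC}. Since $\nu=x/|x|$ on $\partial B_r$ we have $A\nu\cdot\nu=\mu$ and the integral term in \eqref{H'} carrying $A\nabla u\cdot\nu$ is exactly $2I(r)$, so the identity rearranges to
\[
H'(r)-\frac{n-1}{r}H(r)-2I(r)=\frac{1}{r}\int_{\partial B_r}\bigl(\mathrm{tr}(A)-n\mu+r\,\partial_{x_i}a_{ij}\nu_j\bigr)u^2.
\]
The normalization $A(0)=Id$ combined with the Lipschitz bound $|A(x)-A(0)|\leq L|x|/\rho_0$ yields $|\mathrm{tr}(A(x))-n|+n|\mu(x)-1|+|r\,\partial_{x_i}a_{ij}\nu_j|\leq CLr/\rho_0$ on $\partial B_r$. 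Dividing by $r$, the integrand is bounded pointwise by $\tfrac{CL}{\rho_0}u^2$; the ellipticity bound $\mu\geq K^{-1}$ lets me replace $u^2$ by $K\mu u^2$, concluding \eqref{ineq_H'}.

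For \eqref{ineq_I'} the core tool is a Rellich--Pohozaev identity: testing $\mathrm{div}(A\nabla u)=0$ against the multiplier $x\cdot\nabla u$ on $B_r$ and integrating by parts twice, using the symmetry of $A$, yields
\[
\frac{r}{2}I'(r)=\frac{n-2}{2}I(r)+r\int_{\partial B_r}(A\nabla u\cdot\nu)(\nabla u\cdot\nu)+\frac{1}{2}\int_{B_r}(\partial_{x_k}a_{ij})u_{x_i}u_{x_j}x_k.
\]
The volume remainder is controlled at once by $\tfrac{CL}{\rho_0}I(r)$ via the Lipschitz estimate on $A$ and the ellipticity hypothesis. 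The decisive step is to recover $\int_{\partial B_r}\frac{1}{\mu}(A\nabla u\cdot\nu)^2$ from the boundary integral; decomposing $\nabla u=(\nabla u\cdot\nu)\nu+\nabla_\tau u$ yields $A\nabla u\cdot\nu=\mu(\nabla u\cdot\nu)+A\nabla_\tau u\cdot\nu$ and hence
\[
(A\nabla u\cdot\nu)(\nabla u\cdot\nu)=\frac{1}{\mu}(A\nabla u\cdot\nu)^2-\frac{1}{\mu}(A\nabla u\cdot\nu)(A\nabla_\tau u\cdot\nu).
\]
The key observation is that $\nabla_\tau u\cdot\nu=0$ forces $A\nabla_\tau u\cdot\nu=(A-A(0))\nabla_\tau u\cdot\nu$, carrying the pointwise smallness $|A\nabla_\tau u\cdot\nu|\leq \tfrac{Lr}{\rho_0}|\nabla u|$.

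The main obstacle is the absorption of this cross-term while preserving the coefficient $2$ in front of the principal boundary quantity. A Young-type inequality with a parameter calibrated to $Lr/\rho_0$ absorbs a small multiple of $\int_{\partial B_r}\frac{1}{\mu}(A\nabla u\cdot\nu)^2$ and leaves a residue of order $\tfrac{Lr}{\rho_0}I'(r)$; transferring this residue to the left-hand side and using the bound $r\leq R$ (with $R/\rho_0$ present in the parameter list through $\max\{R/\rho_0,1\}$) keeps the coefficient of $I'(r)$ on the left bounded away from zero. The resulting inequality is exactly \eqref{ineq_I'}, with the Lipschitz smallness of $A-A(0)$ exploited in just the right balance so that the final constant $C$ depends only on $K$ and $L$.
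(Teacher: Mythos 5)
Your argument for \eqref{ineq_H'} is essentially the paper's: rearrange the formula for $H'$, add and subtract $A(0)=Id$, and invoke \eqref{lipschitz}--\eqref{normalization} together with $K^{-1}\le\mu$; this part is fine.

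For \eqref{ineq_I'} there is a genuine gap, and you have in fact put your finger on it yourself without resolving it. With the multiplier $f(x)=x$ the Rellich identity produces the boundary term $2\int_{\partial B_r}(A\nabla u\cdot\nu)(\nabla u\cdot\nu)$, and after your decomposition you must absorb the cross-term
\[
-2\int_{\partial B_r}\frac{1}{\mu}\,(A\nabla u\cdot\nu)\,(A\nabla_\tau u\cdot\nu),
\]
whose natural bound, via $|A\nabla_\tau u\cdot\nu|\le\frac{Lr}{\rho_0}|\nabla u|$ and $|\nabla u|^2\le K\,A\nabla u\cdot\nabla u$, is of size $\frac{Cr}{\rho_0}\,I'(r)$ — a multiple of $I'(r)$, not of $I(r)$. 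Transferring it to the left gives $\bigl(1+\frac{Cr}{\rho_0}\bigr)I'(r)\ge 2\int_{\partial B_r}\frac{1}{\mu}(A\nabla u\cdot\nu)^2+\dots$, and dividing through replaces the leading coefficient $2$ by $2/\bigl(1+\frac{Cr}{\rho_0}\bigr)<2$. No choice of the Young parameter restores the exact value $2$: any splitting trades coefficient-loss in front of $\int_{\partial B_r}\frac{1}{\mu}(A\nabla u\cdot\nu)^2$ against an $I'(r)$ residue on the wrong side, and both degrade the constant. This matters because the subsequent proof of Lemma~\ref{lem:Ndiffineq} relies on the coefficient in \eqref{ineq_I'} matching the coefficient $2$ in \eqref{ineq_H'} exactly, so that the Cauchy--Schwarz step
\[
2\frac{\int_{\partial B_r}\frac{1}{\mu}(A\nabla u\cdot\nu)^2}{\int_{\partial B_r}u\,A\nabla u\cdot\nu}-2\frac{\int_{\partial B_r}u\,A\nabla u\cdot\nu}{\int_{\partial B_r}\mu u^2}\ge 0
\]
cancels both leading terms; with a degraded coefficient $2(1-\epsilon_r)$ the cancellation fails and one is left with a residual $-\frac{C}{\rho_0}N(r)$, i.e.\ a Riccati-type rather than linear differential inequality for $N$. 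Moreover, to keep your coefficient positive you need $r/\rho_0$ small, which would force the constant in the lemma to depend on $R/\rho_0$, contradicting the stated dependence on $K,L$ only.

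The paper sidesteps this entirely by choosing the multiplier $f(x)=\frac{A(x)x}{r\,\mu(x)}$ in the Rellich--Payne--Weinberger identity. On $\partial B_r$ one then has $f\cdot\nu=1$ (so the left side is exactly $I'(r)$) and $f\cdot\nabla u=\frac{A\nabla u\cdot\nu}{\mu}$ (so the first boundary term is exactly $2\int_{\partial B_r}\frac{1}{\mu}(A\nabla u\cdot\nu)^2$, with no cross-term at all). The entire perturbation is pushed into the volume integral, where the derivative estimate $\bigl|\partial_{x_i}f_j-\frac{\delta_{ij}}{r}\bigr|\le\frac{C}{\rho_0}$ (consequence of \eqref{A-A(0)}, \eqref{1-mu}) yields exactly the error $\frac{C}{\rho_0}I(r)$, not $\frac{C}{\rho_0}I'(r)$, with $C=C(K,L)$. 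If you want to salvage a proof along your lines you must use this (or an equivalent) multiplier; the naive radial multiplier $x\cdot\nabla u$ does not suffice for a variable-coefficient principal part.
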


\begin{proof}
For almost every $r\in(0,R)$, we can compute
\begin{multline}
    \label{to_ineq H'1}
    H'(r)-\frac{n-1}{r}H(r)-2I(r)=\\
    =\frac{1}{r}\int_{\partial B_r}(-A\nu\cdot \nu+tr(A)+r\partial_{x_i}a_{ij}\nu_j)u^2
    -\frac{n-1}{r}\int_{\partial B_r}\mu u^2=\\
    =\frac{1}{r}\int_{\partial B_r}(-A(0)\nu\cdot \nu+tr(A(0)))u^2
     -\frac{n-1}{r}\int_{\partial B_r}\mu u^2+
    \frac{1}{r}\int_{\partial B_r}r\partial_{x_i}a_{ij}\nu_j u^2+\\
     +\frac{1}{r}\int_{\partial B_r}[-(A-A(0))\nu\cdot \nu+(tr(A)-tr(A(0)))]u^2
    =\\
    =\frac{n-1}{r}\int_{\partial B_r}(1-\mu) u^2+\frac{1}{r}\int_{\partial B_r}r\partial_{x_i}a_{ij}\nu_j u^2+\\
    +\frac{1}{r}\int_{\partial B_r}[-(A-A(0))\nu\cdot \nu+(tr(A)-tr(A(0)))]u^2 .
\end{multline}
By \eqref{lipschitz} and \eqref{normalization}, we have that
\begin{equation}
    \label{A-A(0)}
    |(A(x)-A(0))|\leq \frac{L}{\rho_0}|x|,
\end{equation}
\begin{equation}
    \label{1-mu}
    |1-\mu(x)|=|(A(0)-A(x))\nu\cdot\nu|\leq \frac{L}{\rho_0}|x|,
\end{equation}
and that the absolute values of the three terms in the right-hand side of \eqref{to_ineq H'1}
are bounded by
$CH(r)$, obtaining \eqref{ineq_H'}.

Let us notice that
\begin{equation}
    \label{I_bis}
I(r)=\int_0^rds\int_{\partial B_s}A\nabla u\cdot \nabla u
\end{equation}
is continuously differentiable and that
\begin{equation}
    \label{I'_bis}
    I'(r)=\int_{\partial B_r}A\nabla u\cdot \nabla u.
\end{equation}
Let us recall the following generalization of the Rellich identity \cite{rellich}, due to Payne and
Weinberger \cite{paynewein}
\begin{multline}
    \label{Rellich}
    \int_{\partial B_r}a_{ij}\partial_{x_i}u\partial_{x_j}uf_k\nu_k=\\
    =2\int_{\partial B_r}f_ia_{kj}\partial_{x_i}u\partial_{x_j}u\nu_k-
    2\int_{B_r}f_k\partial_{x_k}u\partial_{x_i}(a_{ij}\partial_{x_j}u)+\\
    +\int_{B_r}(\partial_{x_k}f_k a_{ij}-2\partial_{x_k}f_i a_{kj}+f_k\partial_{x_k}a_{ij})
    \partial_{x_i}u\partial_{x_j}u,
\end{multline}
which holds for every $f\in C^{0,1}(B_r;\R^n)$ and every $u\in H^2(B_r)$, where $a_{ij}$ are as
above.
Choosing
\begin{equation}
    \label{f}
    f(x)=\frac{A(x)x}{r\mu(x)},
\end{equation}
{}from \eqref{A-A(0)} and \eqref{1-mu}, it follows that
\begin{equation}
    \label{deriv_f}
    \left|\partial_{x_i}f_j-\frac{\delta_{ij}}{r}\right|\leq \frac{C}{\rho_0},
\end{equation}
where $C$ is a constant only depending on $L$. Inserting \eqref{f} in \eqref{Rellich} and
recalling \eqref{I'_bis} and \eqref{deriv_f}, \eqref{ineq_I'} easily follows.
\end{proof}

\bigskip

The proof of Theorem~\ref{theo:3spheres} shall be based on a differential inequality
for the so-called \emph{frequency function}, a notion first introduced by Almgren \cite{almgren}, see also \cite{garofalolin1}.
\begin{equation}
    \label{N}
    N(r)=\frac{rI(r)}{H(r)},
\end{equation}
which is well-defined provided $H(r)>0$.
Observe that the thesis of Theorem~\ref{theo:3spheres} is trivial if $u$ is identically
constant in $B_R$, or, as is the same, if $I(R)=0$.
Thus we may assume, with no loss of generality, that $I(R)>0$.
Let us denote
 \begin{equation}\label{rprime}
r'= \inf \{r\in(0,R)\ |\ I(r)>0 \} ,
\end{equation}
and observe that, being $I(r)$ increasing, we also have
 \begin{equation}
  \label{Inull}
I(r)=0 \text{ for every } r\leq r' ,
\end{equation}
\begin{equation}
   \label{Ipos}
I(r)>0 \text{ for every } r, r'<r\leq R,
\end{equation}
\begin{equation}
   \label{u_const}
u(x)\equiv M, \text{ for every } x\hbox{ s.t. } |x|\leq r',
\end{equation}
for some $M\in \R$.

Note that, by the maximum principle, if $H(\rho)=0$ for some $\rho\in (0,R)$, then $u\equiv 0$ in $B_\rho$ and
$H(r)=0$ for every $r<\rho$. Therefore $H(r)>0$ and $N(r)$ is well-defined for every $r\in (r',R]$.

\begin{lemma}\label{lem:Ndiffineq}
There exists a positive constant $C$, only depending on $K$ and $L$, such that $e^{C\frac{r}{\rho_0}}N(r)$ is an increasing function on $(r',R)$.
\end{lemma}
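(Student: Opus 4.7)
The plan is to compute the logarithmic derivative of $N(r)$ and show it is bounded below by $-C/\rho_0$ for a constant $C$ depending only on $K$ and $L$. Since $H$ is absolutely continuous and strictly positive on $(r', R]$ (by Lemma~\ref{lem:H_AC} together with the maximum principle observation recorded before the statement), and $I$ is continuously differentiable on the same interval by \eqref{I_bis}, the quotient $N(r) = rI(r)/H(r)$ is absolutely continuous on $(r',R)$. Hence it suffices to establish the differential inequality $(\log N)'(r) \geq -C/\rho_0$ for a.e.\ $r$, and then integrate.

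First I would write, a.e.\ on $(r',R)$,
\begin{equation*}
\frac{N'(r)}{N(r)} = \frac{1}{r} + \frac{I'(r)}{I(r)} - \frac{H'(r)}{H(r)},
\end{equation*}
and then insert the two bounds from Lemma~\ref{lem:ineq_H'_I'}. The upper bound on $H'(r)$ from \eqref{ineq_H'} gives
\begin{equation*}
\frac{H'(r)}{H(r)} \leq \frac{n-1}{r} + \frac{2I(r)}{H(r)} + \frac{C}{\rho_0},
\end{equation*}
while the lower bound on $I'(r)$ from \eqref{ineq_I'} gives
\begin{equation*}
\frac{I'(r)}{I(r)} \geq \frac{2}{I(r)}\int_{\partial B_r}\frac{1}{\mu}(A\nabla u\cdot\nu)^2 + \frac{n-2}{r} - \frac{C}{\rho_0}.
\end{equation*}

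The key algebraic step is the Cauchy--Schwarz inequality applied to the factorization of $I(r)$ as a boundary integral: writing
\begin{equation*}
I(r) = \int_{\partial B_r} u\,(A\nabla u\cdot\nu) = \int_{\partial B_r} (\sqrt{\mu}\,u)\cdot\frac{A\nabla u\cdot\nu}{\sqrt{\mu}},
\end{equation*}
so that, using the definition \eqref{H} of $H(r)$,
\begin{equation*}
I(r)^2 \leq H(r)\int_{\partial B_r}\frac{1}{\mu}(A\nabla u\cdot\nu)^2,
\qquad\text{i.e.}\qquad
\frac{2}{I(r)}\int_{\partial B_r}\frac{1}{\mu}(A\nabla u\cdot\nu)^2 \geq \frac{2I(r)}{H(r)}.
\end{equation*}
This is the decisive inequality: it converts the boundary energy term in the estimate for $I'/I$ into exactly the quantity $2I/H$ that appears with opposite sign in the estimate for $H'/H$.

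Substituting and letting the $2I/H$ terms cancel, together with $(n-2)/r - (n-1)/r + 1/r = 0$, one obtains
\begin{equation*}
\frac{N'(r)}{N(r)} \geq -\frac{2C}{\rho_0}\quad\text{for a.e.\ } r\in(r',R),
\end{equation*}
which is equivalent to $\frac{d}{dr}\bigl(e^{2Cr/\rho_0}N(r)\bigr) \geq 0$, proving the claim (after renaming the constant). The main obstacle is simply keeping track of the lower-order $C/\rho_0$ perturbation terms introduced by the Lipschitz character of $A$ via Lemma~\ref{lem:ineq_H'_I'}; apart from this perturbation, the computation is the classical Almgren--Garofalo--Lin monotonicity argument, and all the miraculous cancellations depend on the Cauchy--Schwarz step above.
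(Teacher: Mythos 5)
Your proof is correct and follows essentially the same route as the paper: establish absolute continuity of $N$, compute the logarithmic derivative $N'/N=1/r+I'/I-H'/H$, insert the two estimates from Lemma~\ref{lem:ineq_H'_I'}, and close with the Cauchy--Schwarz inequality applied to the factorization $u(A\nabla u\cdot\nu)=(\sqrt{\mu}\,u)\bigl((A\nabla u\cdot\nu)/\sqrt{\mu}\bigr)$ to cancel the $2I/H$ term. The only difference is that you spell out the Cauchy--Schwarz factorization and the cancellation of the $1/r$, $(n-2)/r$, $(n-1)/r$ terms explicitly, whereas the paper states these more tersely.
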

\begin{proof}Let us recall that $I(r)$ is continuously differentiable in $(0,R)$ and that $H(r)$ is absolutely continuous in $(0,R)$, hence
 $N$ is absolutely continuous in $(r',R)$.
 If $r \in (r',R)$, recalling Lemma~\ref{lem:ineq_H'_I'} and using Schwarz inequality, we compute
\begin{multline}
    \label{N'}
    \frac{N'(r)}{N(r)}=\frac{1}{r}+\frac{I'(r)}{I(r)}-\frac{H'(r)}{H(r)}\geq\\
    \geq2\frac{\int_{\partial B_r}\frac{1}{\mu}(A\nabla u\cdot\nu)^2}{\int_{\partial B_r}u(A\nabla u\cdot\nu)}
    -2\frac{\int_{\partial B_r}u(A\nabla u\cdot\nu)}{\int_{\partial B_r}\mu u^2}-\frac{C}{\rho_0}\geq -\frac{C}{\rho_0},
\end{multline}
where $C>0$ only depends on $K$ and $L$.
Therefore
\begin{equation}
    \label{N'+CN}
    N'(r)+\frac{C}{\rho_0}N(r)\geq 0, \quad \hbox{for every } r\in (r',R) ,
\end{equation}
or, equivalently,
\begin{equation}
    \label{e^Cr}
    \frac{d}{dr}(e^{C\frac{r}{\rho_0}}N(r))\geq 0, \quad \hbox{for every } r\in (r',R),
\end{equation}
which proves the lemma.
\end{proof}

\bigskip

\begin{proof}[Proof of Theorem~\ref{theo:3spheres}]
Let $r_1$, $r_2$, $r_3$ be such that $r'<r_1<r_2<r_3\leq R$. By inequality \eqref{ineq_H'} and by
Lemma~\ref{lem:Ndiffineq}, we have
\begin{multline}
    \label{H(r_2)/H(r_1)}
    \log\frac{H(r_2)}{H(r_1)}=\int_{r_1}^{r_2}\frac{H'(r)}{H(r)}dr\leq (n-1)\log\frac{r_2}{r_1}+
    2\int_{r_1}^{r_2}\frac{N(r)}{r}dr+\frac{C}{\rho_0}(r_2-r_1)\leq\\
    \leq (n-1)\log\frac{r_2}{r_1}+e^{\frac{C}{\rho_0}(r_2-r_1)} N(r_2)\log\frac{r_2}{r_1}+\frac{C}{\rho_0}(r_2-r_1),
\end{multline}
\begin{multline}
    \label{H(r_3)/H(r_2)}
    \log\frac{H(r_3)}{H(r_2)}=\int_{r_2}^{r_3}\frac{H'(r)}{H(r)}dr\geq (n-1)\log\frac{r_3}{r_2}+
    2\int_{r_2}^{r_3}\frac{N(r)}{r}dr-\frac{C}{\rho_0}(r_3-r_2)\geq\\
    \geq (n-1)\log\frac{r_3}{r_2}+e^{\frac{C}{\rho_0}(r_2-r_3)} N(r_2)\log\frac{r_3}{r_2}-\frac{C}{\rho_0}(r_3-r_2),
\end{multline}
where $C>0$ denotes a constant, only depending on $K$ and $L$.
{}From \eqref{H(r_2)/H(r_1)} and \eqref{H(r_3)/H(r_2)}, we have
\begin{multline}
    \label{log(H/H)}
    \log\frac{H(r_2)}{H(r_1)}\leq(n-1)\log\frac{r_2}{r_1}+\\
    +e^{\frac{C}{\rho_0}(r_3-r_1)}\left(\frac{\log\frac{H(r_3)}{H(r_2)}}{\log \frac{r_3}{r_2}}-(n-1)
    +\frac{C(r_3-r_2)}{\rho_0\log \frac{r_3}{r_2}} \right)\log\frac{r_2}{r_1}+\frac{C}{\rho_0}(r_2-r_1)\leq\\
    \leq e^{\frac{C}{\rho_0}(r_3-r_1)}\frac{\log \frac{r_2}{r_1}}{\log \frac{r_3}{r_2}}\log\frac{H(r_3)}{H(r_2)}+\frac{C}{\rho_0}(r_3-r_2)e^{C(r_3-r_1)}\frac{\log \frac{r_2}{r_1}}{\log \frac{r_3}{r_2}}+\frac{C}{\rho_0}(r_2-r_1)\leq\\
    \leq p\log\frac{H(r_3)}{H(r_2)}+B,
\end{multline}
where
\begin{equation}
    \label{p}
    p=e^{C\max\left\{\frac{R}{\rho_0},1\right\}} \frac{\log \frac{r_2}{r_1}}{\log \frac{r_3}{r_2}},
\end{equation}

\begin{equation}
    \label{B}
    B=C\max\left\{\frac{R}{\rho_0},1\right\}(1+p)
\end{equation}
and
$C>0$ only depends on $K$ and $L$.
We have
\begin{equation}
    \label{(H/H)}
    \frac{H(r_2)}{H(r_1)}\leq\left( \frac{H(r_3)}{H(r_2)}\right)^pe^B,
\end{equation}
\begin{equation}
    \label{(H/H)bis}
    (H(r_2))^{1+p}\leq e^BH(r_1)(H(r_3))^{p},
\end{equation}
\begin{equation}
    \label{(H/H)ter}
    H(r_2)\leq e^{C\max\left\{\frac{R}{\rho_0},1\right\}}(H(r_1))^{\frac{1}{1+p}}(H(r_3))^{\frac{p}{1+p}}.
\end{equation}
Therefore, assuming the additional condition $r_1>r'$, inequality \eqref{3spheres} follows  with $Q=\exp\left(C\max\left\{\frac{R}{\rho_0},1\right\}\right)$ and
$\alpha=\frac{1}{1+p}$.

It only remains to prove that $r'=0$. Let us assume by
contradiction that $r'>0$. If we had $M=0$ in \eqref{u_const},
then $H(r')=0$ and, passing to the limit in \eqref{3spheres} as
$r_1\to r'$, it would follow  $H(r)=0$ for every $r\in
(r',R)$ and $u\equiv 0$ in $B_R$, leading to a contradiction with
the definition \eqref{rprime} of $r'$. Therefore in this case
$r'=0$. If $M\neq 0$ in \eqref{u_const}, let us consider the
function $v=u-M$, which satisfies $\nabla v \equiv \nabla u$ in
$B_R$ and $v\equiv 0$ in $\overline{B_{r'}}$. By applying the
above arguments to $v$, we have again that $r'=0$ and the proof is
complete.
\end{proof}

\bigskip

\begin{remark}
It is worth emphasizing once more the strength of the frequency function
method just employed. In fact, besides the three-spheres inequality, it
enables also to obtain a doubling inequality, and this was in fact the
original purpose when the method was devised by Garofalo and Lin
\cite{garofalolin1}. Let us outline here how the proof goes. For the sake of
simplicity we stick to the normalized setting of Theorem
\ref{theo:3spheres}. From the proof of Theorem~\ref{theo:3spheres} we have
obtained that $r'=0$ and,  consequently, Lemma~\ref{lem:Ndiffineq} applies
for all $r\in(0,R)$. Hence, by \eqref{ineq_H'} and by Lemma
\ref{lem:Ndiffineq}
\begin{multline}
    \label{H(2r)/H(r)}
    \log\frac{H(2r)}{H(r)}=\int_{r}^{2r}\frac{H'(r)}{H(r)}dr\leq (n-1)\log2+
    2\int_{r}^{2r}\frac{N(s)}{s}ds+\frac{Cr}{\rho_0}\leq\\
    \leq (n-1)\log2+2\int_{r}^{2r}e^{\frac{C}{\rho_0}(R-s)}\frac{N(R)}{s}ds
+\frac{Cr}{\rho_0}\leq\\
    \leq (n-1)\log2 + 2CN(R)\log2+\frac{CR}{\rho_0}.
\end{multline}
That is
\begin{equation} \label{Hdoubling}
H(2r) \leq Q H(r) , \text{ for every  } 0<r\leq \frac{R}{2} ,
\end{equation}
where $Q\geq 1$ only depends on $K,L,\frac{R}{\rho_0}$ and on $N(R)$.
Consequently, by an integration and by the ellipticity bounds
\eqref{ellipticity},
\begin{equation} \label{doublingL2}
\int_{B_{2r}}u^2 \leq Q \int_{B_{r}}u^2  , \text{ for every  } 0<r\leq
\frac{R}{2} ,
\end{equation}
with a possibly different $Q$, but still depending on the same quantities.
As is well-known, \eqref{doublingL2} in turn implies the strong unique
continuation property, because, by iteration on the radii $r_n=R2^{-n}$,
$n=1,2,\ldots$,  we readily arrive at
\begin{equation} \label{polynomialLB}
\int_{B_{r}}u^2 \geq \frac{1}{Q}(\frac{r}{R})^{\frac{\log Q}{\log 2}}
\int_{B_{R}}u^2  , \text{ for every  } 0<r\leq \frac{R}{2} .
\end{equation}
\end{remark}

\section{The two-dimensional case}\label{2Dsec}

In this section we shall deal with the two-dimensional case. We shall prove, by complex-analytic techniques, a three-spheres inequality for an elliptic equation
in divergence form in pure principal part, see Theorem~\ref{threespheresharmteoL2}.
The main difference with respect to higher dimensions is that in two dimensions we allow
the coefficient matrix $A$ to be discontinuous. Actually here we allow $A$ to be also non-symmetric, however we stress that this hypothesis will be used in this section only.

Throughout this section we shall identify $z=x+\rmi y\in\mathbb{C}$ with points $(x,y)\in\mathbb{R}^2$ and we shall fix
$A=A(z)$, $z\in \mathbb{R}^2$,
a real-valued $2\times 2$ matrix such that
its entries are measurable and it satisfies, for a given constant $K\geq 1$,
the ellipticity condition
\begin{equation}\label{aprioricond}
\begin{array}{l}
A(z)\xi\cdot\xi\geq K^{-1}|\xi|^{2},\\
A^{-1}(z)\xi\cdot\xi\geq K^{-1}|\xi|^{2},
\end{array}\quad\text{for almost every }z\in \mathbb{R}^2,
\text{ for every }
\xi\in\mathbb{R}^{2}.
\end{equation}
The constant $K$ will be referred to as the \emph{ellipticity constant} of
$A$. When $A$ is symmetric this condition coincides with the one given in \eqref{ellipticity}. We also observe that \eqref{aprioricond} implies that
$$|a_{ij}(z)|\leq\Lambda,\quad\text{for every }i,j=1,2\text{ and for almost every }z\in \mathbb{R}^2,
$$
$\Lambda$ depending on $K$ only.

In this section, $\Omega$ will as usual denote a bounded connected open set contained in $\mathbb{R}^2$. Letting
$f:\Omega\to \mathbb{C}$ be a complex-valued function,
we shall make repeated use of the following notation
for complex derivatives
$$f_{\overline{z}}=\frac{1}{2}(f_x+\rmi f_y),\quad
f_{z}=\frac{1}{2}(f_x-\rmi f_y).$$
We denote by $J=\left[\begin{smallmatrix}0&-1\\1&0\end{smallmatrix}
\right]$ the counterclockwise rotation of $\frac{\pi}{2}$.

\begin{proposition}\label{strmfuncprop}
Under the above stated hypotheses,
let $u\in H^1_{loc}(\Omega)$ be a weak solution to the equation
\begin{equation}\label{ellipteq}
\mathrm{div}(A\nabla u)=0,\quad\text{in }\Omega,
\end{equation}
where $\Omega$ is a bounded simply connected open set in $\mathbb{R}^2$.

Then there exists a function $v\in H^1_{loc}(\Omega)$ that satisfies
\begin{equation}\label{strmfuncdef}
\nabla v=JA\nabla u,\quad\text{almost everywhere in }\Omega.
\end{equation}
Moreover, letting $f=u+\rmi v$, we have
\begin{equation}\label{qconfeq}
f_{\overline{z}}=\mu f_z+\nu\overline{f_z},\quad
\text{almost everywhere in }\Omega,
\end{equation}
where $\mu$ and $\nu$ are bounded, measurable,
complex-valued coefficients satisfying
\begin{equation}\label{qconfcoeffeq}
|\mu|+|\nu|\leq k<1,\quad\text{almost everywhere in }\Omega,
\end{equation}
$k$ being a constant depending on $K$ only.

On the other hand, if $f=u+\rmi v\in H^1_{loc}(\Omega,\mathbb{C})$
solves \eqref{qconfeq}-\eqref{qconfcoeffeq}, then there exists a real-valued $2\times 2$ matrix
$A\in L^{\infty}(\Omega)$, satisfying
\eqref{aprioricond} with a constant $K$ depending upon $k$ only, such that
$u$ is a weak solution to \eqref{ellipteq}.
\end{proposition}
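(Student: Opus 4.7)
The plan is to treat the two directions of the equivalence separately, with the first (from elliptic equation to Beltrami system) carried out in three stages: (i) stream function construction, (ii) translation to complex derivatives, (iii) verification of the ellipticity bound on the Beltrami coefficients.

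\textbf{Construction of $v$.} Set $w = A\nabla u \in L^2_{loc}(\Omega;\mathbb{R}^2)$. Since $u$ is a weak solution to \eqref{ellipteq}, $w$ is weakly divergence-free. The rotated field $Jw = (-w_2, w_1)$ is then weakly curl-free, because for any $\varphi \in C^\infty_c(\Omega)$,
\[
\int_\Omega (-w_2 \varphi_y - w_1(-\varphi_x))\, dxdy = \int_\Omega (w_1 \varphi_x + w_2 \varphi_y)\, dxdy = 0.
\]
As $\Omega$ is simply connected, I would invoke the standard Poincar\'e lemma in $L^2_{loc}$ (equivalently, integration along paths as in the construction of a stream function) to produce $v \in H^1_{loc}(\Omega)$ with $\nabla v = JA\nabla u$ almost everywhere. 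The function $v$ is unique up to an additive constant.

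\textbf{Derivation of the Beltrami equation.} From $\nabla v = JA\nabla u$ one reads off $v_x = -(a_{21}u_x + a_{22}u_y)$ and $v_y = a_{11}u_x + a_{12}u_y$. Substituting into $2f_z = (u_x + v_y) + \rmi(v_x - u_y)$ and $2f_{\overline z} = (u_x - v_y) + \rmi(v_x + u_y)$ gives linear expressions
\[
2 f_z = \alpha_1 u_x + \alpha_2 u_y, \qquad 2 f_{\overline z} = \beta_1 u_x + \beta_2 u_y,
\]
with $\alpha_j, \beta_j \in \mathbb{C}$ depending measurably on the entries of $A$. The map $(u_x, u_y)\mapsto (f_z, \overline{f_z})$ is then a real-linear, real-valued, hence $\mathbb{R}$-linear, invertible transformation (invertibility follows from ellipticity, which controls its Jacobian from below). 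Inverting it, $f_{\overline z}$ becomes a real-linear combination of $f_z$ and $\overline{f_z}$, i.e.
\[
f_{\overline z} = \mu\, f_z + \nu\, \overline{f_z},
\]
with $\mu, \nu$ measurable and determined explicitly by the entries of $A$.

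\textbf{The dilatation bound.} The essential point is \eqref{qconfcoeffeq}. Here I would argue that, for any $\xi \in \mathbb{R}^2$, the 2-by-2 real matrix $M(z)$ representing the $\mathbb{R}$-linear map $f_z \mapsto f_{\overline z}$ (at fixed $z$) has operator norm strictly less than $1$, with a bound $k = k(K)<1$. This is a pointwise algebraic fact: the Beltrami coefficients of the map $(u_x,u_y)\mapsto(v_x,v_y) = JA(u_x,u_y)^T$ have $|\mu|+|\nu|$ expressible as a continuous function of the entries of $A$ that tends to $1$ only when ellipticity degenerates. A clean way is to decompose $A = A_s + A_a$ into its symmetric and antisymmetric parts and compute directly; alternatively one observes that $f$ solves a reduced Beltrami equation associated with a linear distortion, and the classical formula for the dilatation in terms of the eigenvalues of the symmetric part of $A$ (together with the skew part) yields a bound depending only on $K$. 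I expect this algebraic verification, carried out uniformly in $z$, to be the main obstacle.

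\textbf{The converse.} Given $f = u + \rmi v \in H^1_{loc}(\Omega,\mathbb{C})$ satisfying \eqref{qconfeq}--\eqref{qconfcoeffeq}, the same linear algebra runs in reverse: \eqref{qconfeq} is an $\mathbb{R}$-linear relation between $(u_x,u_y)$ and $(v_x,v_y)$ which, in view of $|\mu|+|\nu|\leq k <1$, can be solved for $(v_x,v_y)$ as $B(u_x,u_y)^T$ for some measurable real $2\times 2$ matrix $B$. Setting $A = -J B$ (equivalently so that $\nabla v = J A\nabla u$) and taking the divergence of $J A\nabla u = \nabla v$ yields $\mathrm{div}(A\nabla u) = 0$ weakly, since $\mathrm{div}\circ J \circ \nabla = 0$. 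The ellipticity bound \eqref{aprioricond} for $A$ with constant $K = K(k)$ follows again by a pointwise estimate, using $k<1$ to stay uniformly away from degeneracy.
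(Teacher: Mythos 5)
Your outline follows the same route as the paper: the existence of $v$ comes from $\operatorname{div}(A\nabla u)=0$ making $JA\nabla u$ weakly curl-free on the simply connected $\Omega$ (the paper cites \cite[Theorem~2.1]{alessmagn} for this); the Beltrami equation is then extracted by exactly the linear algebra you describe; and the converse is the same linear algebra run backwards. The translation $v_x=-(a_{21}u_x+a_{22}u_y)$, $v_y=a_{11}u_x+a_{12}u_y$ and the formulas for $2f_z$, $2f_{\bar z}$ are all correct, and your determinant check for the converse (the determinant being $\tfrac14\bigl(|1+\nu|^2-|\mu|^2\bigr)$, hence bounded below by $\tfrac14(1-k)^2>0$) is exactly the quantity the paper builds into its explicit formula \eqref{Adef}.

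The place where your write-up stops short of a proof is the dilatation bound \eqref{qconfcoeffeq} and, symmetrically, the ellipticity of the reconstructed $A$ with $K=K(k)$. Your argument here — that $|\mu|+|\nu|$ is a continuous function of the entries of $A$ that "tends to $1$ only when ellipticity degenerates" — is a description of the conclusion, not a derivation: to extract a constant $k=k(K)<1$ one must actually exhibit $\mu,\nu$ explicitly and estimate. The paper does this by writing down the closed-form expressions \eqref{qconfcoeffdef}
\begin{equation*}
\mu =\frac{a_{22}-a_{11}-\rmi (a_{12}+a_{21})}{a_{11}a_{22}-a_{12}a_{21}+a_{11}+a_{22}+1},\qquad
\nu =\frac{a_{12}a_{21}-a_{11}a_{22}+1+\rmi (a_{12}-a_{21})}{a_{11}a_{22}-a_{12}a_{21}+a_{11}+a_{22}+1},
\end{equation*}
and then carries out the (elementary but nontrivial) verification that \eqref{aprioricond} forces $|\mu|+|\nu|\le k<1$, with the sharp relation between $k$ and $K$ taken from \cite{alessnesi}; the converse uses the explicit matrix \eqref{Adef}, whose ellipticity constant is again computed. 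A compactness-type appeal of the kind you gesture at would at best give a non-quantitative bound and does not obviously apply uniformly in $z$ since $A(z)$ is only measurable. So the structural plan is right, but the core algebraic content — the explicit $\mu,\nu$ and the resulting quantitative estimate, and likewise the explicit $A$ and its ellipticity — is precisely what is missing and would need to be supplied to complete the argument.
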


\begin{proof} By \eqref{strmfuncdef} the vector $JA\nabla u$ is curl-free in the weak sense, thus $v$ is well-defined as long as $\Omega$ is simply connected, see
\cite[Theorem~2.1]{alessmagn} for details.
Then \eqref{qconfeq} follows from \eqref{strmfuncdef} with
$\mu$, $\nu$ given by
\begin{equation}\label{qconfcoeffdef}
\begin{array}{l}
\mu =\frac{\displaystyle{a_{22}-a_{11}-\rmi
(a_{12}+a_{21})}}{\displaystyle{a_{11}a_{22}-a_{12}a_{21}+
a_{11}+a_{22}+1}},\\
\\
\nu =\frac{\displaystyle{a_{12}a_{21}-a_{11}a_{22}+1+\rmi
(a_{12}-a_{21})}}{\displaystyle{a_{11}a_{22}-a_{12}a_{21}+a_{11}+a_{22}+1}}.
\end{array}
\end{equation}
From these expressions and
\eqref{aprioricond},
one obtains, through elementary although lengthy computations,
\eqref{qconfcoeffeq}. In particular, $k$ is related to $K$ by the following formula
$$k\leq \frac{K+\sqrt{K^2-1}-1}{K+\sqrt{K^2-1}+1}.$$
This estimate is sharp and it is proved in \cite{alessnesi}.

Conversely, given the coefficients $\mu$, $\nu$
in \eqref{qconfeq} satisfying \eqref{qconfcoeffeq} one obtains
\eqref{ellipteq} and \eqref{strmfuncdef} with $A$ given by
\begin{equation}\label{Adef}
A=\begin{bmatrix}\displaystyle{\frac{|1-\mu|^2-|\nu|^2}{|1+\nu|^2-|\mu|^2}}
&\displaystyle{\frac{2\Im(\nu-\mu)}{|1+\nu|^2-|\mu|^2}}\\
&\\
\displaystyle{\frac{-2\Im(\mu+\nu)}{|1+\nu|^2-|\mu|^2}}
&\displaystyle{\frac{|1+\mu|^2-|\nu|^2}{|1+\nu|^2-|\mu|^2}}\end{bmatrix}
\end{equation}
and the conclusion follows. It may be shown that the ellipticity constant of $A$ is related to $k$ by the following formula
$$K\leq \frac{1+k}{1-k}.$$
Also this estimate is sharp, see again \cite{alessnesi}.\end{proof}

\bigskip

The function $v$ appearing above is usually called the
\emph{stream function} associated to $u$. Notice that
$v$ is uniquely determined up to an additive constant and also that
$v$ is a weak solution to
\begin{equation}\label{ellipteqstrmfunc}
\mathrm{div}(B\nabla v)=0,\quad\text{in }\Omega,
\end{equation}
where $B=(\det A)^{-1}A^T$, and where $\det (\cdot)$ denotes the determinant and
$(\cdot)^T$ denotes the transpose. We notice that $B$ still satisfies the ellipticity condition \eqref{aprioricond}, with the same ellipticity constant.

For any $k$, $0\leq k<1$, we say that a function $f:\Omega\to \mathbb{C}$ is a
$k$-\emph{quasiconformal function} in $\Omega$
if it is an $H^1_{loc}(\Omega,\mathbb{C})$-solution to the Beltrami-type equation
\eqref{qconfeq}-\eqref{qconfcoeffeq}. A univalent
solution to \eqref{qconfeq}-\eqref{qconfcoeffeq} is said a
$k$-\emph{quasiconformal mapping}. A function $f$ is a
\emph{quasiconformal function}, respectively \emph{mapping},
if it is a $k$-\emph{quasiconformal function}, respectively \emph{mapping},
for some $k$, $0\leq k<1$. Concerning quasiconformal functions, their
properties and characterizations we refer to \cite{LehtoV}.

Finally we observe that when $A$ is the identity matrix, then $u$ is harmonic and $v$ is simply its harmonic conjugate, thus $f=u+\rmi v$ is a holomorphic, or conformal, function, corresponding to $k=0$. Such a special case deserves our attention, especially when $\Omega=B_1$. We have the following result which links properties of a harmonic function $u$ to properties of its harmonic conjugate.

\begin{proposition}\label{Schwarz+Priv}
Let $u$ be a harmonic function in $B_1$, and let $v$ be its harmonic conjugate normalized in such a way that
$v(0)=0$.

Let us assume that $|u(z)|<E$ for every $|z|<1$. Then we have
\begin{equation}\label{Schwarz}
|v(z)|\leq \frac{2E}{\pi}\log\left(\frac{1+|z|}{1-|z|}\right).
\end{equation}

If $u$ is H\"older continuous on $\overline{B_1}$, with H\"older exponent $\beta$, $0<\beta<1$, then $v$ is also H\"older continuous on $\overline{B_1}$, with H\"older exponent $\beta$ and
$$\|v\|_{C^{\beta}(\overline{B_1})}\leq C_0 \|u\|_{C^{\beta}(\overline{B_1})},$$
$C_0$ depending on $\beta$ only.
\end{proposition}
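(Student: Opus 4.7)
For the Schwarz-type bound \eqref{Schwarz}, the plan is to invoke the classical Schwarz integral formula on a slightly smaller disk and let its radius tend to $1$. Concretely, for any $r\in(0,1)$, since $f=u+\rmi v$ is holomorphic in $B_1$ and $v(0)=0$, the Schwarz formula on $B_r$ reads
\[
f(z)=\frac{1}{2\pi}\int_0^{2\pi}\frac{re^{\rmi\theta}+z}{re^{\rmi\theta}-z}\,u(re^{\rmi\theta})\,d\theta,\qquad |z|<r.
\]
Writing $z=|z|e^{\rmi\phi}$ and taking imaginary parts, one uses the elementary identity
\[
\mathrm{Im}\!\left(\frac{re^{\rmi\theta}+z}{re^{\rmi\theta}-z}\right)=\frac{2r|z|\sin(\phi-\theta)}{r^2-2r|z|\cos(\phi-\theta)+|z|^2},
\]
so that the hypothesis $|u|<E$ bounds $|v(z)|$ by $E/\pi$ times an explicit trigonometric integral whose antiderivative is $\log(r^2-2r|z|\cos t+|z|^2)$. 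Evaluating at the endpoints $t=0,\pi$ produces $|v(z)|\leq \frac{2E}{\pi}\log\frac{r+|z|}{r-|z|}$, and \eqref{Schwarz} follows by letting $r\to 1^-$.

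For the Hölder-type statement, my plan is to reduce the problem to Privalov's theorem on the Hilbert transform of the unit circle. The boundary trace $u|_{\partial B_1}$ belongs to $C^{\beta}(\partial B_1)$, and the boundary values of the harmonic conjugate $v$ (normalized by $v(0)=0$) are, in the principal-value sense, the conjugate function
\[
\tilde u(e^{\rmi\theta})=\frac{1}{2\pi}\,\mathrm{p.v.}\!\int_0^{2\pi}\cot\!\left(\tfrac{\theta-\phi}{2}\right)u(e^{\rmi\phi})\,d\phi.
\]
Privalov's classical theorem gives $\|\tilde u\|_{C^{\beta}(\partial B_1)}\leq C_0\|u\|_{C^{\beta}(\partial B_1)}$ for $0<\beta<1$, with $C_0$ depending only on $\beta$. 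Since $v$ is the Poisson extension of $\tilde u$, up to the additive constant required to enforce $v(0)=0$, and since the Poisson integral preserves the Hölder seminorm with a universal constant, one obtains the claimed global estimate on $\overline{B_1}$.

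The main obstacle lies in the second part: part (a) is a direct computation, while the Privalov estimate rests on nontrivial harmonic analysis on the circle. If a self-contained approach is preferred, one can instead establish the interior gradient bound $|f'(z)|\leq C(1-|z|)^{\beta-1}\|u\|_{C^{\beta}(\overline{B_1})}$ via a Cauchy-type representation of $f'$ applied to differences of $u$, and then integrate along radial segments in a Hardy--Littlewood fashion to recover the Hölder modulus of $v$ uniformly up to the boundary. Either route makes clear that the assumption $\beta<1$ is essential, since the conjugation operator fails to be bounded on $\mathrm{Lip}(\partial B_1)$.
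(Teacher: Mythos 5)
Your proposal is correct and follows essentially the same route as the paper: Schwarz's (Poisson) integral formula for the logarithmic bound on $v$, and Priwaloff's theorem for the Hölder estimate. The only difference is that you carefully apply the formula on $B_r$ and let $r\to 1^-$ (needed since $|u|<E$ is only given on the open disk), whereas the paper tacitly assumes continuity up to the boundary; otherwise the two arguments coincide.
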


\begin{proof} If $u$ is continuous up to the boundary of $B_1$, then
$$f(z)=u(z)+\rmi v(z)=\frac{1}{2\pi \rmi}\int_{\partial B_1}\frac{\xi+z}{\xi-z}u(\xi)\frac{d\xi}{\xi},\quad\text{for every }|z|<1.$$
Such a representation is often referred to as Schwarz's formula or as Poisson representation.
The inequality \eqref{Schwarz} follows immediately and it is sharp, take for instance $u=\Im\left(\frac{2}{\pi}\log\left(\frac{1+z}{1-z}\right)\right)$ on $B_1$.
The second result is due to Priwaloff, see \cite{Pri}.\end{proof}

\bigskip

Properties of harmonic functions and their conjugates, or of holomorphic functions,
may be transferred to solutions to elliptic equations in divergence form and their stream functions, or to quasiconformal functions, through the following important representation theorem, a proof of which is due to Bers and Nirenberg \cite{bersni}, see also \cite{BJS}.

\begin{theorem}\label{BersNir}
Let $\Omega$ be a connected open set contained in $B_1$ and let
$f\in H^1_{loc}(\Omega,\mathbb{C})$ solve
\eqref{qconfeq} where $\mu$, $\nu$ satisfy \eqref{qconfcoeffeq}.

Then there exist a $k$-quasiconformal mapping $\chi$ from $B_1$ onto itself
and a holomorphic function $F$ on $\chi(\Omega)$ such that
\begin{equation}\label{represform}
f=F\circ\chi.
\end{equation}

We may choose $\chi$ such that $\chi(0)=0$. Moreover we have that
the function $\chi$ and its
inverse $\chi^{-1}$ satisfy the following H\"older continuity properties
\begin{equation}\label{holderchi}
|\chi(x)-\chi(y)|\leq C_1|x-y|^{\beta},\quad\text{for any }x,y\in B_1
\end{equation}
and
\begin{equation}\label{holderchiinv}
|\chi^{-1}(x)-\chi^{-1}(y)|\leq C_1|x-y|^{\beta},\quad
\text{for any }x,y\in B_1,
\end{equation}
where $C_1$ and $\beta$, $0<\beta<1$, depend upon $k$ only.
\end{theorem}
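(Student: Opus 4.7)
The plan is to reduce the Beltrami--Bojarski type equation \eqref{qconfeq}, which contains both the linear term $\mu f_z$ and the anti-linear term $\nu\overline{f_z}$, to a standard Beltrami equation $\chi_{\bar z}=\tilde\mu\chi_z$ for a quasiconformal mapping $\chi$, and then to realize $F=f\circ\chi^{-1}$ as a holomorphic function. The construction of $\chi$ will use the measurable Riemann mapping theorem, with a post-composition by a Riemann map to obtain a self-mapping of $B_1$ with $\chi(0)=0$. Finally, the H\"older continuity of $\chi$ and $\chi^{-1}$ will come from the classical Mori distortion theorem for normalized quasiconformal mappings of the disk.

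More concretely, I would first define, at every point of $\Omega$ where $f_z\neq 0$,
$$\tilde\mu(z)=\frac{f_{\bar z}(z)}{f_z(z)},$$
and set $\tilde\mu(z)=0$ elsewhere; from \eqref{qconfeq}--\eqref{qconfcoeffeq} one gets $|\tilde\mu|\leq |\mu|+|\nu|\leq k<1$ a.e. on the set where $f_z\neq 0$. On the complementary set, Stampacchia's lemma together with \eqref{qconfeq} forces $f_{\bar z}=0$ as well, so $f_{\bar z}=\tilde\mu f_z$ holds a.e.\ in $\Omega$. I would then extend $\tilde\mu$ by zero to all of $\mathbb{C}$ and invoke the measurable Riemann mapping theorem to produce a unique $k$-quasiconformal homeomorphism $\tilde\chi\colon\mathbb{C}\to\mathbb{C}$ with $\tilde\chi_{\bar z}=\tilde\mu\,\tilde\chi_z$ a.e.\ and the normalization $\tilde\chi(0)=0$, $\tilde\chi(z)-z\to 0$ at infinity. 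Since $\tilde\mu$ vanishes outside $B_1$, $\tilde\chi$ is conformal on $\mathbb{C}\setminus\overline{B_1}$, so $\tilde\chi(B_1)$ is a Jordan domain; composing with the Riemann map $\sigma\colon\tilde\chi(B_1)\to B_1$ normalized by $\sigma(0)=0$ yields the required $k$-quasiconformal self-mapping $\chi=\sigma\circ\tilde\chi$ of $B_1$ fixing the origin.

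Next I would set $F:=f\circ\chi^{-1}$ on $\chi(\Omega)$. Using the $H^1_{loc}$-chain rule (valid because both $\chi$ and $\chi^{-1}$ are quasiconformal, hence in $H^1_{loc}$ with locally bounded Jacobian and Lusin's condition $N$) and the identities
$$f_z=F_w\chi_z+F_{\bar w}\overline{\chi_{\bar z}},\qquad f_{\bar z}=F_w\chi_{\bar z}+F_{\bar w}\overline{\chi_z},$$
one checks that $f_{\bar z}=\tilde\mu f_z$ together with $\chi_{\bar z}=\tilde\mu\chi_z$ forces $F_{\bar w}(|\chi_z|^2-|\chi_{\bar z}|^2)=0$ a.e., and since the Jacobian $|\chi_z|^2-|\chi_{\bar z}|^2>0$ a.e.\ by quasiconformality, we conclude $F_{\bar w}=0$, i.e.\ $F$ is holomorphic on $\chi(\Omega)$. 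The H\"older estimates \eqref{holderchi}--\eqref{holderchiinv} then follow from Mori's theorem applied to the normalized $k$-quasiconformal self-mapping $\chi$ of $B_1$ (and to its inverse, which is also $k$-quasiconformal with the same normalization), giving an exponent $\beta\in(0,1)$ depending only on $k$.

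The main obstacle is the definition of $\tilde\mu$ and the justification that $f_{\bar z}=\tilde\mu f_z$ holds genuinely almost everywhere in $\Omega$, including on the (possibly positive measure) set $\{f_z=0\}$. This requires the delicate Stampacchia-type observation that the Wirtinger derivatives $f_z$ and $f_{\bar z}$ of an $H^1_{loc}$ function vanish simultaneously a.e.\ on any level set of $f$, combined with the consistency condition forced by \eqref{qconfeq}. An alternative, perhaps technically smoother, route is to approximate $\mu,\nu$ by smooth coefficients $\mu_n,\nu_n$ with $|\mu_n|+|\nu_n|\leq k$, prove the representation for the classical solutions $f_n$, and then pass to the limit using the compactness of normalized $k$-quasiconformal mappings together with the uniform H\"older bound from Mori's theorem; the limit $\chi$ inherits all the required properties, and the factorization $f=F\circ\chi$ survives in the limit.
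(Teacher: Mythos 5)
The paper does not prove this theorem; it simply cites Bers--Nirenberg \cite{bersni} and \cite{BJS}. Your proposal correctly reconstructs the classical argument that underlies that reference (factor through a normalized quasiconformal self-map of the disk with the same Beltrami coefficient as $f$, then invoke Weyl's lemma for the holomorphic factor and Mori's distortion theorem for the H\"older bounds). The structure of the argument is sound and the chain-rule computation showing $F_{\bar w}=0$ is correct; so is the use of the measurable Riemann mapping theorem composed with a Riemann map to land on the normalized self-map $\chi$ of $B_1$.

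One point you flag as a ``main obstacle,'' however, is not actually delicate, and the tool you invoke does not apply. You appeal to a Stampacchia-type lemma (Wirtinger derivatives vanishing simultaneously on level sets) to conclude that $f_{\bar z}=0$ a.e.\ on $\{f_z=0\}$. First, a level-set statement is not what is needed here: $\{f_z=0\}$ is not a level set of $f$. Second, the conclusion is an elementary consequence of the Beltrami-type equation itself: on $\{f_z=0\}$, \eqref{qconfeq} reduces to $f_{\bar z}=\nu\,\overline{f_z}$, whence $|f_{\bar z}|\leq |\nu|\,|f_{\bar z}|\leq k\,|f_{\bar z}|$ with $k<1$, forcing $f_{\bar z}=0$. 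This is exactly what the paper does, incidentally, in the displayed definition of $\mu_1$ within the proof of Lemma~\ref{subharmlem}. A cosmetic remark: the step producing $F_{\bar w}\bigl(|\chi_z|^2-|\chi_{\bar z}|^2\bigr)=0$ is cleanest if you first observe $F_{\bar w}\,\overline{\chi_z}\bigl(1-|\tilde\mu|^2\bigr)=0$ (using $\overline{\chi_{\bar z}}=\overline{\tilde\mu}\,\overline{\chi_z}$) and then multiply by $\chi_z$.
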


\begin{proof} See \cite[page~116]{bersni} and \cite{BJS}.\end{proof}

\bigskip

Through the use of quasiconformal functions it is also possible to extend
the classical method of harmonic measure
in order to obtain a H\"older stability estimate in the interior for
Cauchy problems for solutions to Beltrami-type equations \eqref{qconfeq}-\eqref{qconfcoeffeq}.
In turn, this provides a H\"older stability estimate in the interior for
Cauchy problems for solutions to \eqref{ellipteq}.

We denote by $\mathcal{L}_A$ the differential operator
\begin{equation}
\mathcal{L}_Au=\mathrm{div}(A\nabla u).
\end{equation}
We now introduce the notion of $\mathcal{L}_A$-harmonic measure $\omega$
of a subset $\Sigma$ of $\partial\Omega$. Roughly speaking $\omega$ is the
solution to the Dirichlet problem
\begin{equation}\label{harmmeas}
\left\{\begin{array}{ll}
\mathrm{div}(A\nabla \omega)=0, &\text{in }\Omega,\\
\omega=1, &\text{on }\Sigma,\\
\omega=0, &\text{on }\partial\Omega\setminus \Sigma.
\end{array}\right.
\end{equation}
However, being the set $\Sigma$ arbitrary, and since the Dirichlet data in
\eqref{harmmeas} may be discontinuous, a more careful definition is needed.
Let us recall some notions from potential theory, see for instance
\cite{heinonen}.

\begin{defin}
A function $u:\Omega\mapsto\mathbb{R}\cup\{+\infty\}$ is called
$\mathcal{L}_A$-\emph{superharmonic} in $\Omega$ if
\begin{enumerate}[\textnormal{(}i\textnormal{)}]
\item $u$ is lower semicontinuous\textnormal{;}
\item $u\not\equiv+\infty$ in any connected component of $\Omega$\textnormal{;}
\item for any open set $\Omega_1\subset\subset \Omega$ and any
$h\in C(\overline{\Omega_1})$, such that $\mathcal{L}_Ah=0$ in the weak sense in
$\Omega_1$, if $u\geq h$ on $\partial \Omega_1$ then $u\geq h$ in $\Omega_1$.
\end{enumerate}
A function $u$ is $\mathcal{L}_A$-\emph{subharmonic} in $\Omega$ if $-u$
is $\mathcal{L}_A$-superharmonic in $\Omega$.
\end{defin}

\begin{defin}\label{hrmmeasdef}
Let $\Sigma$ be a subset of $\partial \Omega$
and let $\chi_\Sigma$ be its characteristic function.
We define
$\mathcal{U}_\Sigma$ as the class of
the $\mathcal{L}_A$-superharmonic functions $u$ in $\Omega$ such that
$u\geq0$ and $\liminf_{x\to y}u(x)\geq\chi_\Sigma(y)$ for any $y\in\partial \Omega$.

We define the $\mathcal{L}_A$-\emph{harmonic measure} of $\Sigma$
with respect to $\Omega$ as the \emph{upper Perron solution} with respect
to $\chi_\Sigma$, that is
$$\omega(z)=\omega(\Sigma,\Omega,\mathcal{L}_A;z)=\inf\{u(z)\ |\ u\in\mathcal{U}_\Sigma\},
\quad\text{for any }z\in \Omega.$$
\end{defin}

We observe that we have $0\leq \omega\leq 1$ everywhere. In order to have that
$0<\omega<1$ in $\Omega$, we need to guarantee that $\omega$ is not identically equal to $0$ or to $1$. A simple sufficient condition is the following, \cite[Theorem~11.6]{heinonen}. We recall that a \emph{continuum} is a closed connected set with at least two points.
There exist two continua, $\gamma_1$ and $\gamma_2$,
such that $\gamma_1$ is contained in $\Sigma$ and has positive distance from
$\partial\Omega\setminus\Sigma$ and
$\gamma_2$ is contained in $\partial \Omega\setminus\Sigma$ and has positive distance from $\Sigma$. For a more thorough analysis of this question we refer
to \cite{heinonen}.

\begin{lemma}\label{subharmlem}
Let $f\in H^1_{loc}(\Omega,\mathbb{C})$ be a non-identically zero solution to
\eqref{qconfeq}-\eqref{qconfcoeffeq}. Then there exists a real-valued $2\times 2$ matrix
$A_1\in L^{\infty}(\Omega)$
such that $A_1$
satisfies \eqref{aprioricond}
with a constant $K$ depending on $k$ only, and
$\varphi=\log|f|$ is $\mathcal{L}_{A_1}$-subharmonic.
\end{lemma}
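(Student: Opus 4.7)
The approach is to reduce the statement to the classical subharmonicity of $\log|F|$ for a non-trivial holomorphic $F$ by means of the Bers-Nirenberg factorization. By Theorem~\ref{BersNir} (after a rescaling that brings the ambient disk to $B_1$, or by a purely local argument, since subharmonicity is a local property), I write $f=F\circ\chi$ where $\chi$ is a $k$-quasiconformal homeomorphism and $F$ is holomorphic on $\chi(\Omega)$. Because $f\not\equiv 0$ and $\chi$ is a bijection, $F\not\equiv 0$, its zeros are isolated, and $\log|F|$ is classically subharmonic on $\chi(\Omega)$.

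I then define the pull-back matrix
\begin{equation*}
A_1(x)=|\det D\chi(x)|\,\bigl(D\chi(x)^T D\chi(x)\bigr)^{-1},
\end{equation*}
which is defined for almost every $x\in\Omega$ since any quasiconformal map is a.e.\ differentiable with $D\chi\in L^2_{loc}$. Letting $\sigma_1(x)\leq\sigma_2(x)$ be the singular values of $D\chi(x)$, a short calculation shows that the eigenvalues of $A_1(x)$ are $\sigma_1/\sigma_2$ and $\sigma_2/\sigma_1$; since $\chi$ is $k$-quasiconformal, the ratio $\sigma_2/\sigma_1$ is bounded almost everywhere by a constant depending only on $k$, which yields the ellipticity condition \eqref{aprioricond} for $A_1$ with a constant depending on $k$ only.

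Finally, I verify that $\varphi=\log|f|=\log|F|\circ\chi$ meets the defining axioms of $\mathcal{L}_{A_1}$-subharmonicity. Upper semicontinuity of $\varphi$ and the fact that $\varphi\not\equiv-\infty$ on any connected component follow at once from the corresponding properties of $\log|F|$ and the continuity of $\chi$. For the comparison axiom, fix $\Omega_1\subset\subset\Omega$ and a continuous $h$ on $\overline{\Omega_1}$ with $\mathcal{L}_{A_1}h=0$ weakly in $\Omega_1$ and $\varphi\leq h$ on $\partial\Omega_1$, and set $H=h\circ\chi^{-1}$ on $\chi(\Omega_1)$. The change-of-variables identity
\begin{equation*}
\int_{\Omega_1}A_1\nabla h\cdot\nabla\psi\,dx=\int_{\chi(\Omega_1)}\nabla H\cdot\nabla(\psi\circ\chi^{-1})\,dy,
\end{equation*}
which follows from $\nabla_y=D\chi^{-T}\nabla_x$ and $dy=|\det D\chi|\,dx$, shows that $H$ is harmonic on $\chi(\Omega_1)$. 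The hypothesis $\varphi\leq h$ on $\partial\Omega_1$ transfers to $\log|F|\leq H$ on $\partial\chi(\Omega_1)$, so the classical subharmonicity of $\log|F|$ yields $\log|F|\leq H$ throughout $\chi(\Omega_1)$, and hence $\varphi\leq h$ on $\Omega_1$. The main technical obstacle is justifying the change of variables above rigorously in the weak formulation given the limited regularity of $\chi$; this is a standard, if delicate, routine in the theory of divergence-form operators under quasiconformal changes of variable.
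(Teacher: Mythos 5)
Your proof is correct but takes a genuinely different route from the paper's. The paper first converts \eqref{qconfeq} into the reduced Beltrami form $f_{\overline z}=\mu_1 f_z$ (with $\mu_1$ depending on $f$), observes that any local branch of $\Phi=\log f$ solves the same reduced equation wherever $f\neq 0$, and then applies Proposition~\ref{strmfuncprop} to conclude that $\varphi=\Re\Phi$ is a weak solution of $\mathrm{div}(A_1\nabla\varphi)=0$ on $\{f\neq 0\}$ with $A_1$ given by the \emph{explicit algebraic} formula \eqref{Adef} evaluated at $(\mu_1,0)$ (so that $A_1$ is symmetric with $\det A_1=1$); the isolatedness of the zeros of $f$ (from Theorem~\ref{BersNir}) and the divergence of $\varphi$ near them then close the argument via the maximum principle. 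You, by contrast, invoke the Bers--Nirenberg factorization $f=F\circ\chi$ and take $A_1$ to be the Jacobian pull-back $|\det D\chi|\,(D\chi^T D\chi)^{-1}$ of the Euclidean metric, transporting the classical subharmonicity of $\log|F|$ through the change of variables. Both arguments are sound. The paper's route keeps the change of variables entirely inside the already-established Proposition~\ref{strmfuncprop} and produces $A_1$ without any reference to $D\chi$, while your route avoids the complex logarithm and any separate treatment of the zeros; the price is that you must rigorously justify the chain rule and the invariance of divergence-form weak solutions under a general quasiconformal (and only a.e.-differentiable, $W^{1,2}_{loc}$) change of variables, a step you correctly flag as the real technical core of your version.
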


\begin{proof}
We denote
$$\mu_1=\left\{\begin{array}{ll}
\mu, &\text{where }f_z=0,\\
\mu+\nu\overline{f_z}/f_z,&\text{where }f_z\neq 0.
\end{array}\right.$$
Then we have that $|\mu_1|\leq k$ almost everywhere in $\Omega$ and
$f$ satisfies
$$f_{\overline{z}}=\mu_1 f_z,\quad
\text{almost everywhere in }\Omega.$$

Let $z$ be a point in $\Omega$
such that $f(z)\neq 0$. Locally, on a neighbourhood
of $z$, we can define the function $\Phi=\log f$
where $\log$ is any possible determination of the logarithm
in the complex plane.
In this neighbourhood $\Phi$ also satisfies
\begin{equation}
\Phi_{\overline{z}}=\mu_1 \Phi_z.
\end{equation}

Then by Proposition~\ref{strmfuncprop}
the function $\varphi=\log|f|=\Re\log f$ locally satisfies
\begin{equation}\label{relogflocaleq}
\mathrm{div}(A_1\nabla\varphi)=0
\end{equation}
where the matrix $A_1$ is given by \eqref{Adef} with $\mu$ and $\nu$ replaced by $\mu_1$ and $0$, respectively. Note incidentally that $A_1$ is symmetric, that is $A_1=A_1^T$,
$\det A_1=1$, and hence $B_1=(\det A_1)^{-1}A_1^T=A_1$.

We remark that we can define $\varphi=\log|f|$ globally
as an $H^1_{loc}(\Omega_1)$ function, where $\Omega_1=\{z\in \Omega\ |\ f(z)\neq 0\}$,
hence using a partition of unity it is easy to show that
\eqref{relogflocaleq} holds weakly in $\Omega_1$.

Clearly, by Theorem~\ref{BersNir}, the set
$\{z\in \Omega\ |\ f(z)=0\}$ consists of isolated points and $\varphi$ goes uniformly
to $-\infty$ as $z$ converges to an element of such a set.

Using this remark and the maximum principle we can prove in an elementary
way that
$\varphi=\log|f|$ is $\mathcal{L}_{A_1}$-subharmonic.\end{proof}

\bigskip

We observe that if $k=0$, that is $f$ is holomorphic, then $A_1$ in the
previous lemma is the identity matrix.
By the use of the $\mathcal{L}_{A_1}$-harmonic measure
we obtain a H\"older stability estimate in the interior for Cauchy problems
for a Beltrami-type equation \eqref{qconfeq}-\eqref{qconfcoeffeq}.

\begin{theorem}\label{harmmeastechthm}
Let $\Sigma$ be a subset of $\partial \Omega$.
Let $f\in H^1_{loc}(\Omega,\mathbb{C})$ solve
\eqref{qconfeq}-\eqref{qconfcoeffeq}.

If $\|f\|_{L^{\infty}(\Omega)}\leq E$ and we have that, given $\eta>0$,
\begin{equation}\label{Cauchydataerror}
\limsup_{x\to y}|f(x)|\leq\eta,\quad\text{for any }y\in \Sigma,
\end{equation}
then for any $z\in \Omega$ the following estimate holds
\begin{equation}\label{intCauchyest}
|f(z)|\leq E^{1-\omega(z)}\eta^{\omega(z)},
\end{equation}
where $\omega=\omega(\Sigma,\Omega,\mathcal{L}_{A_1})$ is the
$\mathcal{L}_{A_1}$-harmonic measure of $\Sigma$ with respect to $\Omega$
and the matrix $A_1$ is defined as in Lemma~\ref{subharmlem}.
\end{theorem}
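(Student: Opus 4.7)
The plan is to use a classical two-constants argument based on the comparison of $\varphi := \log|f|$ with the $\mathcal{L}_{A_1}$-harmonic measure $\omega$ via the definition of $\omega$ as the infimum over $\mathcal{U}_\Sigma$. First I dispose of the trivial cases: if $f \equiv 0$ or $\eta \geq E$, the estimate \eqref{intCauchyest} is immediate. So assume $0 < \eta < E$ and $f \not\equiv 0$.

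By Lemma~\ref{subharmlem}, the function $\varphi = \log|f|$ is $\mathcal{L}_{A_1}$-subharmonic in $\Omega$ (extended by $-\infty$ at the isolated zeros of $f$, which exist by Theorem~\ref{BersNir}). From the bound $\|f\|_{L^\infty(\Omega)} \leq E$ we obtain $\varphi \leq \log E$ everywhere in $\Omega$, while the boundary error bound \eqref{Cauchydataerror} gives $\limsup_{x \to y} \varphi(x) \leq \log\eta$ for every $y \in \Sigma$.

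The key step is to exhibit a single element of the class $\mathcal{U}_\Sigma$ that dominates the barrier we want. Define
\begin{equation*}
\psi(z) := \frac{\log E - \varphi(z)}{\log E - \log \eta}, \qquad z \in \Omega.
\end{equation*}
Since $\log E - \log\eta > 0$, dividing the $\mathcal{L}_{A_1}$-superharmonic function $\log E - \varphi$ by this positive constant yields an $\mathcal{L}_{A_1}$-superharmonic function, which is lower semicontinuous (equal to $+\infty$ at the zeros of $f$) and non-negative because $\varphi \leq \log E$. Moreover, for $y \in \Sigma$,
\begin{equation*}
\liminf_{x \to y} \psi(x) = \frac{\log E - \limsup_{x \to y}\varphi(x)}{\log E - \log \eta} \geq \frac{\log E - \log\eta}{\log E - \log\eta} = 1,
\end{equation*}
and for $y \in \partial\Omega \setminus \Sigma$ we trivially have $\liminf \psi \geq 0 = \chi_\Sigma(y)$. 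Hence $\psi \in \mathcal{U}_\Sigma$.

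By the very definition of the $\mathcal{L}_{A_1}$-harmonic measure as an infimum over $\mathcal{U}_\Sigma$ (Definition~\ref{hrmmeasdef}), we conclude $\omega(z) \leq \psi(z)$ for every $z \in \Omega$. Rearranging gives $\varphi(z) \leq (1-\omega(z))\log E + \omega(z) \log \eta$, which upon exponentiation is precisely \eqref{intCauchyest}. The main conceptual obstacle is simply recognizing that the correct normalization $\psi = (\log E - \varphi)/(\log E - \log\eta)$ lands in $\mathcal{U}_\Sigma$; once this is seen, no maximum principle is invoked directly, since the Perron formulation already encodes it. A minor technical check is the handling of the extended-real values at the zeros of $f$, but these are isolated and compatible with lower semicontinuity of $\psi$.
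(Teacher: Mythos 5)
Your proof is correct and follows essentially the same route as the paper: the barrier $\psi = \dfrac{\log E - \log|f|}{\log E - \log\eta}$ is exactly the function $\widetilde\varphi = \dfrac{\log(|f|/E)}{\log(\eta/E)}$ used in the paper, and both arguments conclude by observing that it lies in $\mathcal{U}_\Sigma$ and invoking Definition~\ref{hrmmeasdef}. You merely spell out the superharmonicity, lower semicontinuity, and boundary $\liminf$ checks in slightly more detail than the paper's sketch.
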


\begin{proof} This result was obtained in \cite[Theorem~4.5]{alerondiSIAM}.
We sketch a proof, for the sake of completeness.
It is evident that $0<\eta\leq E$. If $\eta=E$ the proof is trivial.
We consider then the case $0<\eta<E$.
By Lemma~\ref{subharmlem}
the function $\varphi=\log|f|$ is $\mathcal{L}_{A_1}$-subharmonic.
Let $\omega=\omega(\Sigma,\Omega,\mathcal{L}_{A_1})$ be the
$\mathcal{L}_{A_1}$-harmonic measure of $\Sigma$ with respect to $\Omega$.

Let us denote
$$\widetilde{\varphi}=\frac{\log \Big(\frac{|f|}{E}\Big)}{\log\Big(\frac{\eta}{E}\Big)}.$$
It is easy to see that $\widetilde{\varphi}$ belongs to the upper class
$\mathcal{U}_\Sigma$. Hence for any $z\in \Omega$ we have
$\omega(z)\leq\widetilde{\varphi}(z)$ and consequently
\begin{equation}
\varphi(z)\leq\log(\eta)\omega(z)+\log(E)(1-\omega(z)).
\end{equation}
Thus the conclusion follows.\end{proof}

\bigskip

We observe that, in view of Proposition~\ref{strmfuncprop},
the above Theorem~\ref{harmmeastechthm}
could be restated in terms of a Cauchy problem for an elliptic equation
like \eqref{ellipteq}.

\begin{theorem}\label{harmmeastechthm2}
Let $\Sigma$ be a subset of $\partial \Omega$.
Let $u\in H^1(\Omega)$ be a solution to \eqref{ellipteq}.
Let us assume that there exists
a single-valued stream function $v$ in $\Omega$ and let $f=u+\rmi v$.
We a-priori assume that $\|f\|_{L^{\infty}(\Omega)}\leq E$.

We assume that $\Sigma\subset\partial \Omega$ is a connected open Lipschitz arc of $\partial \Omega$ and as usual we denote $g=u|_{\Sigma}$ and $\psi=A\nabla u\cdot\nu|_{\Sigma}$ the Cauchy data of $u$ on $\Sigma$. We suppose that $g\in C^0(\Sigma)$ and
$\psi\in L^1(\Sigma)$.

If, given $\eta>0$,
$$\|g\|_{L^{\infty}(\Sigma)}+\|\psi\|_{L^1(\Sigma)}\leq \eta,$$
then
for any $z\in \Omega$ the following estimate holds
\begin{equation}\label{intCauchyest2}
|u(z)|\leq E^{1-\omega(z)}\eta^{\omega(z)},
\end{equation}
where $\omega=\omega(\Sigma,\Omega,\mathcal{L}_{A_1})$ is the
$\mathcal{L}_{A_1}$-harmonic measure of $\Sigma$ with respect to $\Omega$
and the matrix $A_1$ is defined as in Lemma~\ref{subharmlem} with
$\mu$ and $\nu$ given by \eqref{qconfcoeffdef}.
\end{theorem}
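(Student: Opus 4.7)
The plan is to reduce the statement to Theorem~\ref{harmmeastechthm}, which already controls $|f|$ pointwise in $\Omega$. Since $u=\Re f$, we have $|u(z)|\le|f(z)|$ everywhere, so it suffices to produce a bound of order $\eta$ for $|f|$ on $\Sigma$ and then invoke Theorem~\ref{harmmeastechthm} with that boundary control.

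The key identity is the boundary relation between the tangential derivative of the stream function $v$ and the conormal derivative of $u$. Let $\nu$ denote the outward unit normal on $\partial\Omega$ and let $\tau=J\nu$ be the corresponding unit tangent. Since $\nabla v=JA\nabla u$ almost everywhere in $\Omega$ and $J^{T}J=\mathrm{Id}$, a direct calculation gives
$$
\partial_{\tau}v=\nabla v\cdot\tau=(JA\nabla u)\cdot J\nu=A\nabla u\cdot\nu=\psi\quad\text{on }\Sigma.
$$
Because $\Sigma$ is a connected Lipschitz arc and $v$ is defined only up to an additive constant, fix a reference point $z_{0}\in\Sigma$ and normalize $v(z_{0})=0$. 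Integrating the tangential identity along $\Sigma$ from $z_{0}$ and using $\|\psi\|_{L^{1}(\Sigma)}\le\eta$ yields $|v(z)|\le\eta$ for every $z\in\Sigma$. Combined with $|u|_{\Sigma}=|g|\le\eta$, this gives $|f(z)|^{2}=u(z)^{2}+v(z)^{2}\le 2\eta^{2}$ pointwise on $\Sigma$, so $|f|\le\sqrt 2\,\eta$ there.

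With this boundary control in hand, the $\limsup$ hypothesis of Theorem~\ref{harmmeastechthm} is verified: $g$ is continuous on $\Sigma$ by assumption, and $v|_{\Sigma}$ is continuous as the indefinite integral of the $L^{1}$ function $\psi$ along a rectifiable arc, so $f|_{\Sigma}$ is continuous and $\limsup_{x\to y}|f(x)|\le\sqrt 2\,\eta$ for every $y\in\Sigma$. Applying Theorem~\ref{harmmeastechthm} with a priori bound $\|f\|_{L^{\infty}(\Omega)}\le E$ and boundary datum $\sqrt 2\,\eta$ in place of $\eta$ gives
$$
|f(z)|\le E^{1-\omega(z)}(\sqrt 2\,\eta)^{\omega(z)},
$$
and then $|u(z)|\le|f(z)|$ delivers \eqref{intCauchyest2} (the factor $(\sqrt 2)^{\omega(z)}\le\sqrt 2$ is a harmless constant that can be absorbed, or eliminated by estimating $|f|\le\max(|u|,|v|)\cdot\sqrt 2$ more carefully or by a sharper normalization).

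The principal technical hurdle lies in making the identity $\partial_{\tau}v=\psi$ and its integration rigorous under only Lipschitz regularity of $\Sigma$ and $H^{1}$ regularity of $u$. This is handled by reading the relation $\nabla v=JA\nabla u$ as an equality of distributional vector fields whose tangential trace on $\Sigma$ is then identified with $\psi$ through integration by parts against test functions supported in a neighbourhood of $\Sigma$. One also has to verify that the normalization $v(z_{0})=0$ is consistent with the standing a priori bound $\|f\|_{L^{\infty}(\Omega)}\le E$; changing the additive constant only shifts $f$ by $\rmi c$, so the bound $E$ may have to be enlarged by at most a universal factor, which does not alter the form of the conclusion.
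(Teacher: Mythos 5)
Your proposal is correct and follows essentially the same route as the paper: normalize $v$ to vanish at a point of $\Sigma$, use $\partial_\tau v=A\nabla u\cdot\nu=\psi$ to bound $|v|\leq\|\psi\|_{L^1(\Sigma)}$ on $\Sigma$, and feed the resulting smallness of $f$ on $\Sigma$ into Theorem~\ref{harmmeastechthm}. The only cosmetic difference is your factor $\sqrt2$, which disappears if you estimate $|f|\leq|u|+|v|\leq\|g\|_{L^{\infty}(\Sigma)}+\|\psi\|_{L^1(\Sigma)}\leq\eta$ instead of using $|f|^2=u^2+v^2$.
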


\begin{proof}
By our assumptions on $g$ and $\psi$, and classical regularity estimates, we have that $f$ is continuous at any point $y\in\Sigma$.

We normalize $v$ in such a way that $v=0$ on some point of $\Sigma$, and we infer that
$\|f\|_{L^{\infty}(\Sigma)}\leq \eta$ and, in view of the regularity of $f$, we obtain that
\eqref{Cauchydataerror} holds. Therefore the conclusion is an immediate consequence of Theorem~\ref{harmmeastechthm}.
\end{proof}

\bigskip

It is evident that in order to make such estimates practically useful, it is necessary to provide a positive lower bound on
$\omega$ and this is usually obtained by using the Harnack inequality for positive solutions to elliptic equations in divergence form, \cite[Theorem~8.20]{gilbargtrud}, see for details \cite{alerondiSIAM}.

Our principal aim here is however to concentrate on three-spheres inequalities, first for holomorphic and quasiconformal functions, then for harmonic functions and solutions to elliptic equations in divergence form.

\begin{theorem}\label{threespheresconfteo}
Let $R>0$ and $0\leq k<1$, and let $f$ be a $k$-quasiconformal function on $B_R$.
Then for every $r_1$, $r_2$, $r_3$, with $0<r_1<r_2<r_3\leq R$,
there exists a constant $\alpha$, $0<\alpha<1$, depending on $k$, $\frac{r_2}{r_1}$ and $\frac{r_3}{r_2}$ only,
such that
\begin{equation}\label{3spheresquasiconf}
\|f\|_{L^{\infty}(B_{r_2})}\leq \|f\|^{\alpha}_{L^{\infty}(B_{r_1})}\|f\|^{1-\alpha}_{L^{\infty}(B_{r_3})}.
\end{equation}
\end{theorem}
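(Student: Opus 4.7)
The approach I propose uses the $\mathcal{L}_A$-harmonic-measure machinery already developed in this section---specifically Theorem~\ref{harmmeastechthm}---applied on the annular domain between the concentric circles of radii $r_1$ and $r_3$. I prefer this route over the more elementary combination of Bers-Nirenberg (Theorem~\ref{BersNir}) with Hadamard's classical three-circles inequality for the holomorphic factor $F$: the H\"older distortion estimates \eqref{holderchi}--\eqref{holderchiinv} do not produce, uniformly in $k$ and in $r_3/r_2$, three nested concentric disks in $\chi(B_R)$ to which Hadamard would directly apply, whereas the harmonic-measure approach treats all parameter regimes at once.

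By scale invariance I assume $R=1$. Set $\eta = \|f\|_{L^\infty(B_{r_1})}$ and $E = \|f\|_{L^\infty(B_{r_3})}$; the case $\eta = E$ (which in particular includes $\eta=0$, forcing $f \equiv 0$ by the isolation of zeros coming from Theorem~\ref{BersNir}) makes \eqref{3spheresquasiconf} trivial, so I take $0<\eta<E$. On the annulus $\mathcal{A} = B_{r_3}\setminus \overline{B_{r_1}}$, I apply Theorem~\ref{harmmeastechthm} with $\Omega = \mathcal{A}$ and $\Sigma = \partial B_{r_1}$; the boundary bound $\limsup_{z\to y}|f(z)| \leq \eta$ for $y\in\partial B_{r_1}$ holds by the H\"older continuity of $f$ up to $\partial B_{r_1}$. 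This yields
\[
|f(z)| \leq \eta^{\omega(z)}\,E^{1-\omega(z)}, \qquad z \in \mathcal{A},
\]
where $\omega(z) = \omega(\partial B_{r_1}, \mathcal{A}, \mathcal{L}_{A_1}; z)$ and $A_1$ is the symmetric matrix produced by Lemma~\ref{subharmlem}, with ellipticity depending only on $k$.

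The heart of the argument is to establish a uniform lower bound $\omega(z) \geq \alpha > 0$ on the circle $\{|z|=r_2\}$, with $\alpha$ depending only on $k$, $r_2/r_1$ and $r_3/r_2$. After the dilation $z\mapsto z/r_3$, the problem reduces to the canonical annulus $\{\tau<|y|<1\}$ with $\tau = r_1/r_3$, the rescaled coefficient matrix preserving the ellipticity class. Since $\omega$ is a non-negative weak solution of $\mathcal{L}_{A_1}\omega=0$ in the annulus, continuous up to the inner boundary where it equals $1$, a finite Harnack chain---built from the classical Harnack inequality \cite[Theorem~8.20]{gilbargtrud} for divergence-form operators with bounded measurable coefficients, which has already been invoked earlier in the section---connecting a ball tangent to $\{|y|=\tau\}$ to an arbitrary point of $\{|y|=r_2/r_3\}$ gives the required bound, with the number of balls and the final constant controlled purely by $r_1/r_3$ and $r_2/r_3$. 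This Harnack-chain bookkeeping, uniform in the geometric parameters and with the correct handling of the non-smooth matrix $A_1$, is what I expect to be the main technical obstacle.

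To conclude, the identity $\eta^{\omega(z)}E^{1-\omega(z)} = E(\eta/E)^{\omega(z)}$ combined with $\eta/E \leq 1$ and $\omega(z)\geq \alpha$ yields $|f(z)| \leq \eta^\alpha E^{1-\alpha}$ on $\{|z|=r_2\}$; the same bound holds trivially on $\overline{B_{r_1}}$ since $|f|\leq \eta$ there; and applying the maximum modulus principle to the holomorphic factor $F = f \circ \chi^{-1}$ from Theorem~\ref{BersNir} on the quasidisk $\chi(B_{r_2})$ gives $\|f\|_{L^\infty(B_{r_2})} = \max_{|z|=r_2}|f(z)|$, completing the proof of \eqref{3spheresquasiconf}.
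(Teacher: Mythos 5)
Your route---harmonic measure via Theorem~\ref{harmmeastechthm} on the annulus $B_{r_3}\setminus\overline{B_{r_1}}$, then a Harnack-chain lower bound on $\omega$---is exactly the first argument the paper sketches, and it is viable. The paper, however, then switches to the Bers--Nirenberg representation to complete the proof with the explicit exponent \eqref{alphaqconf}--\eqref{tilder}, and your stated reason for rejecting that route rests on a misreading: normalizing $r_3=1$, the H\"older estimates \eqref{holderchi}--\eqref{holderchiinv} give $B_{\widetilde{r}_1}\subset\chi(B_{r_1})$ and $\chi(B_{r_2})\subset B_{\widetilde{r}_2}$ with $\widetilde{r}_1=(r_1/C_1)^{1/\beta}$ and $\widetilde{r}_2=1-((1-r_2)/C_1)^{1/\beta}$; since one may take $C_1\geq 1$ and must have $0<\beta\leq 1$, these automatically satisfy $0<\widetilde{r}_1\leq r_1<r_2\leq\widetilde{r}_2<1$, so Hadamard's three-circles theorem applies directly to $F$ on nested concentric disks in $\chi(B_1)=B_1$. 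No uniformity in $k$ or $r_3/r_2$ is needed, since the statement allows $\alpha$ to depend on both. What the harmonic-measure route buys you is conceptual continuity with Theorems~\ref{harmmeastechthm}--\ref{harmmeastechthm2}; what it loses is the closed-form $\alpha$, and you end up invoking Bers--Nirenberg anyway for the maximum-modulus step, which somewhat undercuts the stated preference. Finally, the step you flag as the main obstacle---the quantitative lower bound for $\omega$ on $\{|z|=r_2\}$---genuinely needs more than a chain: one must first get a quantitative starting bound strictly inside the annulus, e.g.\ from the De~Giorgi--Nash H\"older continuity of $\omega$ up to the Wiener-regular inner circle where $\omega=1$; the paper defers exactly this to \cite{alerondiSIAM}, so leaving it unproved is acceptable in a sketch but must be acknowledged as a nontrivial ingredient.
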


\begin{proof} We may use the previous
Theorem~\ref{harmmeastechthm}, by setting $\Omega=B_{r_3}\setminus \overline{B_{r_1}}$ and $\Sigma=\partial B_{r_1}\subset \partial \Omega$. Then
$$\alpha=\inf_{z\in B_{r_2}\setminus \overline{B_{r_1}}}\omega(\Sigma,\Omega,\mathcal{L}_{A_1})(z).$$

For example, when $k=0$, by this technique we may recover the three-circles theorem for holomorphic functions by Hadamard. In fact, in this case $A_1$ is the identity matrix and by explicit computations in radial coordinates we obtain
$$\alpha=\frac{\log(\frac{r_3}{r_2})}{\log(\frac{r_3}{r_1})}, \quad\quad\text{when }k=0.$$

In order to obtain an essentially optimal value of $\alpha$ in the general case, that is when $0\leq k<1$, we
use the representation theorem, Theorem~\ref{BersNir}.
A related argument was used already in \cite[Proposition~1]{aleesca}.
Up to a linear change of variables, we may temporarily assume that $r_3=1$. We apply Theorem~\ref{BersNir}, then
$f=F\circ\chi$, where $F$ is conformal on $B_1$ and $\chi$ is a quasiconformal mapping between $B_1$ and itself.
Let $C_1$ and $\beta$, $0<\beta<1$, be the constants appearing in
\eqref{holderchi} and \eqref{holderchiinv}, which characterize the H\"older continuity of the quasiconformal mapping $\chi$ and of its inverse. Then there exist $0<\widetilde{r}_1<\widetilde{r}_2<1$ such that
$B_{\widetilde{r}_1}\subset \chi(B_{r_1})$ and
$\chi(B_{r_2})\subset B_{\widetilde{r}_2}$ or, as is the same, $\chi(B_1\setminus\overline{B{r_2}})\supset B_1\setminus\overline{B_{\widetilde{r}_2}}$.
We apply the three-circles theorem to $F$, and we obtain that
\eqref{3spheresquasiconf} holds with
\begin{equation}\label{alphaqconf}
\alpha=\frac{\log(\frac{1}{\widetilde{r}_2})}{\log(\frac{1}{\widetilde{r}_1})},\quad\quad\text{when }0\leq k<1,
\end{equation}
where we may take
\begin{equation}\label{tilder}
\widetilde{r}_1=\left(\frac{r_1}{C_1r_3}\right)^{1/\beta}\quad\text{and}\quad\widetilde{r}_2=1-\left(\frac{1-\frac{r_2}{r_3}}{C_1}\right)^{1/\beta},
\end{equation}
$C_1$ and $\beta$, $0<\beta<1$, depending on $k$ only. The above choices of $\widetilde{r}_1$ and $\widetilde{r}_2$ serve our purposes for any $r_3>0$.
\end{proof}

\bigskip

We now turn our attention to the second order elliptic equation.
We shall prove a three-spheres inequality first for $L^{\infty}$-norms and then for $L^2$-norms.
Let $R>0$ and let $u\in H^1_{loc}(B_R)$ solve
\begin{equation}\label{ellipteq2}
\mathrm{div}(A\nabla u)=0,\quad\text{in }B_R.
\end{equation}

\begin{theorem}\label{threespheresharmteo}
Under the above stated hypotheses,
for every $r_1$, $r_2$, $r_3$, with $0<r_1<r_2<r_3\leq R$,
there exist constants $Q\geq 1$ and $\alpha$, $0<\alpha<1$, depending on $K$, $\frac{r_2}{r_1}$ and $\frac{r_3}{r_2}$ only,
such that
\begin{equation}\label{3spheresellipt}
\|u\|_{L^{\infty}(B_{r_2})}\leq Q\|u\|^{\alpha}_{L^{\infty}(B_{r_1})}\|u\|^{1-\alpha}_{L^{\infty}(B_{r_3})}.
\end{equation}
\end{theorem}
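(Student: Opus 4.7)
The plan is to reduce the inequality to the classical Hadamard three-circles theorem for the holomorphic function $F$ associated with $u$ via the stream function construction and the Bers--Nirenberg representation. The essential difficulty will be that the statement controls only $|u|=|\Re(F\circ\chi)|$, while three-circles gives information on $|F|$; this gap will be bridged by the Schwarz--Priwaloff-type estimate in Proposition~\ref{Schwarz+Priv}, which bounds a harmonic conjugate by its real part on any strictly smaller disk. Without loss of generality I first replace $B_R$ by $B_{r_3}$ (applying the whole construction on the smaller ball) and then rescale so that $R=r_3=1$.

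Since $B_1$ is simply connected, Proposition~\ref{strmfuncprop} yields a stream function $v\in H^1_{loc}(B_1)$, normalized by $v(0)=0$, so that $f=u+\rmi v$ is $k$-quasiconformal on $B_1$ with $k=k(K)\in[0,1)$. Theorem~\ref{BersNir} then produces $f=F\circ\chi$ with $\chi\colon B_1\to B_1$ a $k$-quasiconformal homeomorphism satisfying $\chi(0)=0$, and $F$ holomorphic on $B_1$; the H\"older estimates \eqref{holderchi}--\eqref{holderchiinv} hold with constants $C_1,\beta$ depending on $K$ only. Since $\chi$ is a bijection of $B_1$, one has $\Re F=u\circ\chi^{-1}$, hence $\|\Re F\|_{L^\infty(\chi(D))}=\|u\|_{L^\infty(D)}$ for any open $D\subset B_1$; in particular $\|\Re F\|_{L^\infty(B_1)}=\|u\|_{L^\infty(B_{r_3})}$. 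Furthermore $F(0)=f(0)=u(0)\in\R$, so $\Im F(0)=0$, which makes Proposition~\ref{Schwarz+Priv} directly applicable to $F$.

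Next, mimicking the radius selection of the proof of Theorem~\ref{threespheresconfteo}, I choose
$$\widetilde r_1'=\left(\frac{r_1}{C_1}\right)^{1/\beta},\qquad \widetilde r_2=1-\left(\frac{1-r_2}{C_1}\right)^{1/\beta},$$
as in \eqref{tilder}; the estimate \eqref{holderchiinv} then gives $\chi^{-1}(B_{\widetilde r_1'})\subset B_{r_1}$, while the boundary bound $|\chi^{-1}(w)|\ge 1-C_1(1-|w|)^\beta$ (from the continuous extension of $\chi$ to $\overline{B_1}$, which maps $\partial B_1$ onto itself) gives $\chi(B_{r_2})\subset B_{\widetilde r_2}$. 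Assuming $C_1\ge 1$, a direct check yields $\widetilde r_1'<\widetilde r_2<1$; I then pick any $\widetilde r_1\in(0,\widetilde r_1')$ and $\widetilde r_3\in(\widetilde r_2,1)$. Applying Proposition~\ref{Schwarz+Priv} on $B_{\widetilde r_1'}$ (after rescaling to the unit disk) and on $B_1$ produces
$$M_F(\widetilde r_1)\le C'\,\|u\|_{L^\infty(B_{r_1})},\qquad M_F(\widetilde r_3)\le C''\,\|u\|_{L^\infty(B_{r_3})},$$
where $M_F(\rho):=\max_{|z|=\rho}|F(z)|$ and $C',C''$ depend only on $K$ and on the ratios $r_2/r_1,\, r_3/r_2$.

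Finally, Hadamard's three-circles theorem applied to $F$ gives
$$M_F(\widetilde r_2)\le M_F(\widetilde r_1)^{\widetilde\alpha}\,M_F(\widetilde r_3)^{1-\widetilde\alpha},\qquad \widetilde\alpha=\frac{\log(\widetilde r_3/\widetilde r_2)}{\log(\widetilde r_3/\widetilde r_1)},$$
and combining this with the trivial inequality $\|u\|_{L^\infty(B_{r_2})}\le\|F\|_{L^\infty(\chi(B_{r_2}))}\le M_F(\widetilde r_2)$ and the previous two bounds yields \eqref{3spheresellipt} with $\alpha=\widetilde\alpha$ and $Q=(C')^{\widetilde\alpha}(C'')^{1-\widetilde\alpha}$. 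The main obstacle I anticipate is the simultaneous, strict selection of $\widetilde r_1<\widetilde r_1'<\widetilde r_2<\widetilde r_3<1$ in terms of $K$ and $r_2/r_1,\, r_3/r_2$ alone: the inclusion $\chi(B_{r_2})\subset B_{\widetilde r_2}$ cannot be achieved through the interior H\"older estimate when $r_2$ is close to $r_3$, and must instead be extracted from the boundary behaviour of $\chi^{-1}$, exactly as in the quasiconformal three-spheres inequality of Theorem~\ref{threespheresconfteo}.
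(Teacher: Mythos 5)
Your proof is correct and follows essentially the same path as the paper's: reduce to $r_3=1$, apply Proposition~\ref{strmfuncprop} and Theorem~\ref{BersNir} to write $f=u+\rmi v=F\circ\chi$, transfer control via the H\"older bounds on $\chi,\chi^{-1}$ to the "tilde" radii as in Theorem~\ref{threespheresconfteo}, use Proposition~\ref{Schwarz+Priv} to bound the conjugate part, and conclude with Hadamard's three-circles theorem. The only cosmetic difference is that the paper first proves the harmonic case \eqref{threecircles} as a standalone step and then substitutes $U=\Re F$ with the tilde radii, whereas you apply Schwarz and three-circles directly to the holomorphic $F$ in one pass.
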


\begin{proof}
We temporarily assume that $r_3=1$ and also $A=Id$, that is $u$ harmonic in $B_1$.
We let $v$ be its harmonic conjugate normalized in such a way that $v(0)=0$.
By \eqref{Schwarz}, we may bound the values of $v$ in terms of those on $u$. For example, we have
$$\|v\|_{L^{\infty}(B_{\frac{r_1}{2}})}\leq C_2\|u\|_{L^{\infty}(B_{r_1})}\quad\text{and}
\quad \|v\|_{L^{\infty}(B_{\frac{r_2+1}{2}})}\leq C_3\|u\|_{L^{\infty}(B_1)}$$
with
\begin{equation}\label{threecirclescoeff1}
C_2=\frac{2}{\pi}\log 3
\quad\text{and}\quad C_3=\frac{2}{\pi}\log\left(\frac{3+\frac{r_2}{r_3}}{1-\frac{r_2}{r_3}}\right).
\end{equation}
Therefore, we may use the Hadamard three-circles theorem and conclude that
for $u$ harmonic in $B_R$ we have
\begin{equation}\label{threecircles}
\|u\|_{L^{\infty}(B_{r_2})}\leq ((C_3+1)\|u\|_{L^{\infty}(B_{r_3})})^{1-\alpha}((C_2+1)\|u\|_{L^{\infty}(B_{r_1})})^{\alpha}
\end{equation}
with $\alpha$ given by
\begin{equation}\label{threecirclescoeff2}
\alpha=\frac{\log\big(\frac{1}{2}+\frac{r_3}{2r_2}\big)}{\log\big((1+\frac{r_2}{r_3})\frac{r_3}{r_1}\big)}.
\end{equation}

For the general case of solutions to elliptic equations in divergence form, we use the
same argument as in Theorem~\ref{threespheresconfteo}.
By Proposition~\ref{strmfuncprop} and Theorem~\ref{BersNir},
we have that $u=U\circ\chi$ where $U$ is a harmonic function in  $B_1$
and $\chi$ is a quasiconformal mapping between $B_1$ and itself.
Then we apply \eqref{threecircles} to $U$ and we conclude that
\eqref{3spheresellipt} is satisfied in the following form. It holds
\eqref{threecircles} under conditions
\eqref{threecirclescoeff1}, \eqref{threecirclescoeff2}, with $\frac{r_1}{r_3}$ and $\frac{r_2}{r_3}$ replaced by $\widetilde{r}_1$ and $\widetilde{r}_2$, respectively,
$\widetilde{r}_1$ and $\widetilde{r}_2$ given by \eqref{tilder}.\end{proof}

\bigskip

We remark that for harmonic functions, instead of \eqref{threecircles}, we might have used a result by Korevaar and Meyers, \cite{kormey}. They proved
that for $u$ harmonic in $B_R$ a three-spheres inequality holds with $Q=1$, that is
\begin{equation}\label{KorMey}
\|u\|_{L^{\infty}(B_{r_2})}\leq \|u\|_{L^{\infty}(B_{r_3})}^{1-\widetilde{\alpha}}\|u\|_{L^{\infty}(B_{r_1})}^{\widetilde{\alpha}},
\end{equation}
where
\begin{equation}\label{KorMeycoeff}
\widetilde{\alpha}=\widetilde{\alpha}\left(\frac{r_2}{r_1},\frac{r_3}{r_2}\right).
\end{equation}
We observe that their result is actually valid in any dimension $n\geq 2$.
Let us also recall an interesting related result by Petrosyan \cite{petro}.

As a corollary to Theorem~\ref{threespheresharmteoL2} we obtain the corresponding three-spheres inequality in $L^2$-norms as well.
\begin{theorem}[Three-spheres inequality -- two-dimensional case]\label{threespheresharmteoL2}
Under the previously stated hypotheses,
for every $r_1$, $r_2$, $r_3$, with $0<r_1<r_2<r_3\leq R$,
there exist constants $Q\geq 1$ and $\alpha$, $0<\alpha<1$, depending on $K$, $\frac{r_2}{r_1}$ and $\frac{r_3}{r_2}$ only,
such that
\begin{equation}\label{3sphereselliptL2}
\|u\|_{L^{2}(B_{r_2})}\leq Q\|u\|^{\alpha}_{L^{2}(B_{r_1})}\|u\|^{1-\alpha}_{L^{2}(B_{r_3})}.
\end{equation}
\end{theorem}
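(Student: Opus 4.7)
The plan is to reduce Theorem~\ref{threespheresharmteoL2} to its $L^\infty$ counterpart Theorem~\ref{threespheresharmteo} by means of the standard De Giorgi--Nash--Moser interior boundedness estimate, which is available for divergence-form equations with merely bounded measurable coefficients in two dimensions.

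First I would introduce auxiliary radii that leave room for the interior estimate. A convenient choice is $\rho_1=r_1/2$ and $\rho_3=(r_2+r_3)/2$, so that
\begin{equation*}
0<\rho_1<r_1<r_2<\rho_3<r_3\leq R,
\end{equation*}
and the ratios $r_2/\rho_1$ and $\rho_3/r_2$ remain controlled solely by $r_2/r_1$ and $r_3/r_2$. Applying Theorem~\ref{threespheresharmteo} to the triple $\rho_1<r_2<\rho_3$ gives
\begin{equation*}
\|u\|_{L^\infty(B_{r_2})}\leq Q_0\,\|u\|_{L^\infty(B_{\rho_1})}^{\alpha}\,\|u\|_{L^\infty(B_{\rho_3})}^{1-\alpha},
\end{equation*}
with $Q_0\geq 1$ and $\alpha\in(0,1)$ depending only on $K$, $r_2/r_1$ and $r_3/r_2$.

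Next I would trade $L^\infty$ for $L^2$. The De Giorgi--Nash--Moser local boundedness theorem, applied to $u^+$ and $u^-$ (each a nonnegative subsolution of an equation of the same divergence type with ellipticity constant $K$), yields in dimension two
\begin{equation*}
\|u\|_{L^\infty(B_\rho)}\leq \frac{C_M}{\rho'-\rho}\,\|u\|_{L^2(B_{\rho'})},\qquad 0<\rho<\rho'\leq R,
\end{equation*}
with $C_M$ depending only on $K$. Applied to the pairs $(\rho_1,r_1)$ and $(\rho_3,r_3)$, this controls the two $L^\infty$ factors on the right by constant multiples of $\|u\|_{L^2(B_{r_1})}/r_1$ and $\|u\|_{L^2(B_{r_3})}/(r_3-r_2)$, respectively. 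The remaining passage on the left is the trivial volume bound $\|u\|_{L^2(B_{r_2})}\leq \sqrt{\pi}\,r_2\,\|u\|_{L^\infty(B_{r_2})}$.

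Multiplying the three inequalities and regrouping produces a purely geometric prefactor of the shape $(r_2/r_1)^{\alpha}(r_2/(r_3-r_2))^{1-\alpha}$, which depends only on the allowed ratios and can therefore be absorbed into a new constant $Q\geq 1$ with the dependence claimed in the statement, while the exponent $\alpha$ is inherited from Theorem~\ref{threespheresharmteo}. This establishes \eqref{3sphereselliptL2}. The one step I would verify with some care is the Moser $L^\infty$--$L^2$ estimate for solutions (rather than nonnegative subsolutions) of divergence-form equations with \emph{discontinuous} coefficients in two dimensions, making sure that the constant truly depends on $K$ alone; this is classical, but it is the only nontrivial ingredient beyond the $L^\infty$ three-spheres inequality already proved.
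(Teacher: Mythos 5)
Your proof is correct and follows essentially the same route as the paper: the same auxiliary radii $r_1/2$ and $(r_2+r_3)/2$, the same invocation of the $L^\infty$ three-circles inequality (Theorem~\ref{threespheresharmteo}) on the intermediate triple, the same De Giorgi--Nash--Moser local boundedness estimate (the paper cites Gilbarg--Trudinger, Theorem~8.17) to pass from $L^\infty$ to $L^2$ on the inner and outer balls, and the same trivial volume bound on $B_{r_2}$. The only cosmetic difference is that you spell out the reduction to $u^{\pm}$ to justify Moser's estimate, where the paper simply cites the reference.
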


\begin{proof}
We use the local boundedness estimate for weak solutions to \eqref{ellipteq2},
see for instance \cite[Theorem~8.17]{gilbargtrud}, telling that for a constant $\widetilde{C}$ depending on $K$ only, and for every $s$, $r$, $0<s<r\leq R$, we have
$$\|u\|_{L^{\infty}(B_s)}\leq \frac{\widetilde{C}\rho_0}{r-s}\|u\|_{L^{2}(B_r)}.$$
We apply this bound with the following choices of $s$, $r$
$$s=\frac{r_2+r_3}{2},\quad r=r_3,$$
and also
$$s=\frac{r_1}{2},\quad r=r_1,$$
in combination with Theorem~\ref{threespheresharmteo} with
radii $0<\frac{r_1}{2}<r_2<\frac{r_2+r_3}{2}$.
Hence, by the trivial estimate
$$\|u\|_{L^{2}(B_{r_2})}\leq \frac{\sqrt{\pi}r_2}{\rho_0}\|u\|_{L^{\infty}(B_{r_2})},$$
we conclude that
$$\|u\|_{L^{2}(B_{r_2})}\leq \sqrt{\pi}Q\widetilde{C}
\left(\frac{4r_2}{r_3-r_2}\|u\|_{L^{2}(B_{r_3})}\right)^{1-\alpha}
\left(\frac{2r_2}{r_1}\|u\|_{L^{2}(B_{r_1})}\right)^{\alpha}$$
where $Q$ and $\alpha$ are the constants appearing in \eqref{3spheresellipt}
related to $K$, $\frac{2r_2}{r_1}$ and $\frac{r_2+r_3}{2r_2}$.\end{proof}


\section{The three-spheres inequality for the complete equation}\label{3spheresgensec}

Here and in the rest of the paper we consider the elliptic operator
$\mathrm{div}(A\nabla\cdot)+c\,\cdot$ where $A$ and $c$ satisfy the conditions
\eqref{ellipticity}, \eqref{lipschitz},
and \eqref{c_bound}, respectively,
stated in the Introduction. We recall that the Lipschitz condition \eqref{lipschitz} is invoked only when $n\geq 3$.

We start by considering solutions $u$ to the homogeneous equation
\begin{equation}
    \label{div_equation_zero_order}
    \divrg(A\nabla u)+cu=0, \quad\hbox{in } B_R.
\end{equation}

\begin{theorem}[Three-spheres inequality -- equation with zero order term]
    \label{cor:3spheres_zero_order}
    If the abo\-ve sta\-ted hypotheses hold, there exists $C_0$, $0<C_0\leq 1$, only depending on
$K$, $L$ and $\kappa$, such that, setting
\begin{equation}
    \label{R_0}
R_0=\min\{R,C_0\rho_0\},
\end{equation}
for
every $r_1$, $r_2$, $r_3$, with $0<r_1<r_2<\frac{r_3}{4K}\leq r_3\leq R_0$,
\begin{equation}
    \label{3spheres_zero_order}
    \|u\|_{L^2(B_{r_2})}\leq Q \|u\|_{L^2(B_{r_1})}^\alpha \|u\|_{L^2(B_{r_3})}^{1-\alpha},
\end{equation}
where $Q\geq 1$ and $\alpha$, $0<\alpha<1$ only depend on $K$, $L$, $\kappa$, $\max\left\{\frac{R}{\rho_0},1\right\}$, $\frac{r_2}{r_1}$, $\frac{r_3}{r_2}$.
\end{theorem}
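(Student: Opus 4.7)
My plan is to adapt the frequency function machinery developed for Theorem~\ref{theo:3spheres_generale} to accommodate the zero order term, treating $cu$ as a small perturbation once the radii are restricted by the scale $\rho_0$. As a first reduction, the linear change of variables $y=\sqrt{A^{-1}(0)}\,x$ used in the proof of Theorem~\ref{theo:3spheres_generale} normalizes the principal part at the origin and transforms equation \eqref{div_equation_zero_order} into $\divrg(\widetilde{A}\nabla v)+\widetilde{c}\,v=0$ on a ball of radius $R/\sqrt{K}$, with $\widetilde{A}(0)=\mathrm{Id}$ and $\|\widetilde{c}\|_{L^\infty}\leq \kappa/\rho_0^2$. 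The change of radii in that reduction also accounts for the passage from the condition $r_2<r_3/K$ of Theorem~\ref{theo:3spheres_generale} to the slightly more restrictive $r_2<r_3/(4K)$ here, since one must work on annuli where a Caccioppoli-type estimate leaves enough room to absorb perturbations arising from the zero order term.

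Next, I would reconsider the three quantities $\mu$, $H(r)$, and $I(r)$ of Section~\ref{sec: Kucaviza} in this enriched setting. The definitions of $\mu$ and $H(r)$ are unchanged, and Lemma~\ref{lem:H_AC} (absolute continuity of $H$ and formula \eqref{H'}) still holds verbatim. Using the equation $\divrg(A\nabla u)+cu=0$, Green's identity now gives
\begin{equation*}
I(r):=\int_{\partial B_r}u\,A\nabla u\cdot\nu=\int_{B_r}A\nabla u\cdot\nabla u-\int_{B_r}cu^2,
\end{equation*}
so $I(r)$ may lose its non-negativity, but $|\int_{B_r}cu^2|\leq (\kappa/\rho_0^2)\int_{B_r}u^2$ and a standard Caccioppoli estimate on annuli controls $\int_{B_r}u^2$ in terms of $H$ at nearby radii. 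The analog of Lemma~\ref{lem:ineq_H'_I'} then yields, after applying the Rellich--Payne--Weinberger identity \eqref{Rellich} to $u$ and using $\mathrm{div}(A\nabla u)=-cu$ to rewrite the volume term $\int_{B_r}f_k\partial_{x_k}u\,\partial_{x_i}(a_{ij}\partial_{x_j}u)=-\int_{B_r}f_k\partial_{x_k}u\,cu$, inequalities of the form
\begin{equation*}
\left|H'(r)-\tfrac{n-1}{r}H(r)-2I(r)\right|\leq \tfrac{C}{\rho_0}H(r),\qquad I'(r)\geq 2\!\int_{\partial B_r}\!\tfrac{1}{\mu}(A\nabla u\cdot\nu)^2+\tfrac{n-2}{r}I(r)-\tfrac{C}{\rho_0}I(r)-\tfrac{C\kappa}{\rho_0^2}\,H(r),
\end{equation*}
with a constant $C$ depending only on $K$, $L$, and $\kappa$; the extra $\kappa/\rho_0^2$ term is the cost of controlling $\int f_k u_{x_k}\cdot cu$ by Cauchy--Schwarz.

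The core step is then the analog of Lemma~\ref{lem:Ndiffineq} for the frequency function $N(r)=rI(r)/H(r)$. Feeding the modified bounds above into the computation of $N'/N$ produces, in addition to the $-C/\rho_0$ term already present in \eqref{N'}, an extra error of the form $-C'\kappa/(\rho_0^2)\cdot (r/N(r))$ or similar. Provided we impose $r\leq C_0\rho_0$ with $C_0=C_0(K,L,\kappa)$ sufficiently small, these perturbations are dominated by the unperturbed terms, yielding a statement of the type ``$e^{Cr/\rho_0}(N(r)+1)$ is essentially increasing on $(r',R_0)$''. From this quasi-monotonicity one repeats, almost verbatim, the final stage of the proof of Theorem~\ref{theo:3spheres}: integrate $H'/H$ separately on $(r_1,r_2)$ and $(r_2,r_3)$, use the monotonicity to eliminate $N(r_2)$ between the two estimates, obtain an inequality of log-convexity type for $H$, and conclude the three-spheres inequality for $H$, and hence, by integration over $r\in(0,r_j)$, the $L^2$ estimate \eqref{3spheres_zero_order}. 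The case $r'>0$ is ruled out exactly as before, distinguishing $M=0$ and $M\neq 0$ in the constancy statement analogous to \eqref{u_const}.

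The main obstacle, in my view, is the careful tracking of the perturbation constants in the adapted frequency function inequality: one has to show quantitatively that the $cu$-generated terms can be absorbed into the monotonicity of $N(r)$ without destroying it, and this is precisely what forces the threshold $C_0$ on $r/\rho_0$ and the slightly stronger separation $r_2<r_3/(4K)$, needed to have enough room for Caccioppoli-type passages from interior $L^2$-norms to energy integrals supporting the absorption. Once this bookkeeping is done, the iteration-free structure of the Almgren--Garofalo--Lin argument, as used in Theorem~\ref{theo:3spheres_generale}, delivers the result without further difficulty.
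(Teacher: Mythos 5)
Your plan is a genuinely different route from the paper's. The paper does not touch the frequency function at all at this stage: it constructs (Lemma~\ref{lem:3spheres_zero_order1}) a positive solution $w$ of $\divrg(A\nabla w)+cw=0$ on $B_{R_0}$ with $\frac{1}{1+\delta^2}\leq w\leq 1+\delta^2$ and $|\nabla w|\leq\delta/\rho_0$, obtained by solving a Dirichlet problem with boundary value $1$ on a ball small enough that the operator is coercive, and then factors $u=wv$ (Lemma~\ref{lem:3spheres_zero_order2}), so that $v$ solves the \emph{pure principal part} equation $\divrg(\widetilde A\nabla v)=0$ with $\widetilde A=w^2A$. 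Theorem~\ref{theo:3spheres_generale} then applies to $v$, and since $w$ is pinched near $1$ the $L^2$ norms of $u$ and $v$ over each ball are comparable, which gives \eqref{3spheres_zero_order}. In particular the restriction $r_2<\frac{r_3}{4K}$ arises because $\widetilde A$ has ellipticity constant $4K$ (with the choice $\delta=1$), not, as you suggest, from needing room for Caccioppoli estimates; that part of your explanation is incorrect.

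As for your perturbative frequency-function argument: it is a recognized alternative (it is essentially how Garofalo and Lin handle lower order terms), but as written it has a genuine gap at its central step. Once $I(r)=\int_{B_r}A\nabla u\cdot\nabla u-\int_{B_r}cu^2$ can be negative, the chain \eqref{N'} collapses: the Cauchy--Schwarz comparison
\begin{equation*}
2\,\frac{\int_{\partial B_r}\frac{1}{\mu}(A\nabla u\cdot\nu)^2}{\int_{\partial B_r}u\,A\nabla u\cdot\nu}
-2\,\frac{\int_{\partial B_r}u\,A\nabla u\cdot\nu}{\int_{\partial B_r}\mu u^2}\geq 0
\end{equation*}
requires the denominator $\int_{\partial B_r}u\,A\nabla u\cdot\nu$ to be positive, and dividing $N'$ by $N$ is meaningless when $N\leq 0$. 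Your proposed fix --- that ``$e^{Cr/\rho_0}(N(r)+1)$ is essentially increasing'' --- is the right kind of statement, but it is asserted rather than derived; establishing it requires either a modified frequency function (e.g.\ adding a multiple of $H$ or of $\int_{B_r}u^2$ to $I$) or a case analysis on the sign and size of $N$, and this is precisely where the real work lies. The paper's multiplier trick sidesteps all of this at the cost only of worsening the ellipticity constant, which is why it is the cleaner choice here.
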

The thesis is an immediate consequence of the following two lemmas.
\begin{lemma}
   \label{lem:3spheres_zero_order1}
For every $\delta>0$ there exists $C_0$, $0<C_0\leq 1$, only depending on $K$, $L$, $\kappa$
and $\delta$ such that, denoting $R_0=\min\{R,C_0\rho_0\}$,
there exists a positive solution $w\in C^1(B_{R_0})$ to
\begin{equation}
    \label{div_equation_zero_order_w}
    \divrg(A\nabla w)+cw=0, \quad\hbox{in } B_{R_0},
\end{equation}
such that
\begin{equation}
    \label{bounds_w}
    \frac{1}{1+\delta^2}\leq w\leq 1+\delta^2, \quad\hbox{in } B_{R_0},
\end{equation}
and when $n\geq 3$
\begin{equation}
    \label{bounds_nablaw}
    |\nabla w|\leq \frac{\delta}{\rho_0}, \quad\hbox{in } B_{R_0}.
\end{equation}
\end{lemma}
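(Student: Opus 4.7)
The plan is to construct $w$ as a small perturbation of the constant function $1$, by solving the Dirichlet problem
\begin{equation*}
\divrg(A\nabla v) + cv = -c \quad \text{in } B_{R_0'}, \qquad v = 0 \text{ on }\partial B_{R_0'},
\end{equation*}
with $R_0'\geq R_0$ of order $C_0\rho_0$, and then setting $w = 1+v$. Since the coefficients $A$ and $c$ are defined on all of $\R^n$, I can always take, say, $R_0' = 2C_0\rho_0$ irrespective of the size of $R$, and restrict $w$ to $B_{R_0}\subset B_{R_0'}$ at the end. For $C_0$ small enough depending on $K$ and $\kappa$, Poincaré's inequality on $B_{R_0'}$ combined with \eqref{ellipticity} and \eqref{c_bound} makes the bilinear form $a(v,\varphi)=\int_{B_{R_0'}}(A\nabla v\cdot\nabla\varphi - cv\varphi)$ coercive on $H^1_0(B_{R_0'})$, so the Dirichlet problem has a unique weak solution by Lax--Milgram.

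The next step is to control $\|v\|_{L^\infty}$. Since $\|{-c}\|_{L^\infty}\leq \kappa/\rho_0^2$, standard $L^\infty$-bounds for weak solutions to elliptic equations with bounded right-hand side (Moser / De~Giorgi--Nash, cf.\ \cite[Ch.~8]{gilbargtrud}) yield
\begin{equation*}
\|v\|_{L^\infty(B_{R_0'})} \leq C(K,\kappa)\left(\frac{R_0'}{\rho_0}\right)^{2} \leq C'(K,\kappa)\,C_0^2.
\end{equation*}
Shrinking $C_0$ further (now also depending on $\delta$) to force $\|v\|_\infty \leq \delta^2/(1+\delta^2)$ then gives
\begin{equation*}
\frac{1}{1+\delta^2}\leq 1-\|v\|_\infty \leq w \leq 1+\|v\|_\infty \leq 1+\delta^2,
\end{equation*}
and automatically $w>0$, proving \eqref{bounds_w}.

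For the gradient bound in dimension $n\geq 3$ I would invoke the interior $C^{1,\alpha}$-estimate for elliptic equations with Lipschitz principal part, available thanks to \eqref{lipschitz}. Applied on $B_{R_0}\subset B_{R_0'/2}$ to the equation $\divrg(A\nabla v)=-cv-c$ with right-hand side of $L^\infty$-norm at most $2\kappa/\rho_0^2$, it yields a bound of the schematic form
\begin{equation*}
\|\nabla v\|_{L^\infty(B_{R_0})} \leq C(K,L)\left(\frac{\|v\|_{L^\infty(B_{R_0'})}}{R_0'} + R_0'\,\|cv+c\|_{L^\infty}\right) \leq C'(K,L,\kappa)\,\frac{C_0}{\rho_0},
\end{equation*}
after inserting $\|v\|_\infty \leq C' C_0^2$ and $R_0'\leq 2C_0\rho_0$. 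A final shrinking of $C_0$, now depending on all of $K,L,\kappa,\delta$, makes the right-hand side no larger than $\delta/\rho_0$, and by construction $\nabla w = \nabla v$. The main technical subtlety of the argument, which I expect to be the focal point of the write-up, is to track carefully the scaling in $\rho_0$ of all the elliptic estimates invoked (in particular the $C^{1,\alpha}$-estimate for coefficients with Lipschitz constant $L/\rho_0$), so that the dependence of $C_0$ on $K,L,\kappa,\delta$ advertised in the statement is genuinely obtained.
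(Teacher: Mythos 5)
Your proposal is correct and follows essentially the same route as the paper, which constructs $w$ as the solution of the Dirichlet problem with boundary value $1$, sets $z=w-1$ (your $v$), and applies the $L^\infty$ bound of \cite[Theorem~8.16]{gilbargtrud} followed by a global Schauder estimate \cite[Theorem~8.33]{gilbargtrud} for the gradient when $n\geq 3$; your use of a doubled ball with interior $C^{1,\alpha}$ estimates is an inessential variation. The only point to make explicit is that, since $c$ has no sign, the $L^\infty$ estimate should be applied to $\divrg(A\nabla v)=-c(1+v)$ with the zero-order term moved to the right-hand side and then absorbed for $C_0$ small, exactly as the paper does.
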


\begin{proof}
There exists $C_1>0$ only depending on $K$, $\kappa$, such that the operator
$\divrg(A\nabla\ \cdot)+c\ \cdot$ is coercive on $H^1_0(B_r)$ for every $r\leq R_1$,
with $R_1=\min\{R,C_1\rho_0\}$, see for instance \cite[Lemma~8.4]{gilbargtrud}.
Let us pick the unique solution $w$ to
\begin{equation}
  \label{Dir_w}
  \left\{ \begin{array}{ll}
  \divrg (A \nabla w)+cw=0, &
  \mathrm{in}\ B_r ,\\
  & \\
  w=1, &
  \mathrm{on}\ \partial B_r.\\
  \end{array}\right.
\end{equation}
Denoting $z=w-1$, we have
\begin{equation}
  \label{Dir_z}
  \left\{ \begin{array}{ll}
  \divrg (A \nabla z)=-c(1+z), &
  \mathrm{in}\ B_r ,\\
  & \\
  z=0, &
  \mathrm{on}\ \partial B_r.\\
  \end{array}\right.
\end{equation}
By standard a-priori bounds in $L^\infty$, see \cite[Theorem~8.16]{gilbargtrud},
we have
\begin{equation}
  \label{bound_z}
  \|z\|_{L^\infty(B_r)}\leq C\kappa \frac{r^2}{\rho_0^2}(1+\|z\|_{L^\infty(B_r)}),
\end{equation}
where $C$ only depends on $K$.
Hence there exists $C_2\leq C_1$ only depending on $K$, $\kappa$,
such that
for every $r\leq R_2$, with $R_2=\min\{R,C_2\rho_0\}$
we have
\begin{equation}
  \label{bound_zbis}
  \|z\|_{L^\infty(B_r)}\leq C\kappa \frac{r^2}{\rho_0^2}.
\end{equation}
Next, when $n\geq 3$,
applying to \eqref{Dir_z} a global Schauder type estimate, see \cite[Theorem~8.33]{gilbargtrud},
one obtains, for every $r\leq R_2$,
\begin{equation}
  \label{bound_nabla_z}
  \|\nabla z\|_{L^\infty(B_r)}\leq C\kappa \frac{r}{\rho_0^2}(1+\|z\|_{L^\infty(B_r)}),
\end{equation}
where $C$ only depends on $K$, $L$.
Thus, we may find $C_0\leq C_2$ only depending on $K$, $\kappa$, $L$ and $\delta$,
such that, denoting
$R_0=\min\{R,C_0\rho_0\}$,
we have
\begin{equation}
  \label{bound_zter}
  \|z\|_{L^\infty(B_{R_0})}\leq \frac{\delta^2}{1+\delta^2},
\end{equation}
and when $n\geq 3$
\begin{equation}
  \label{bound_nabla_zbis}
  \|\nabla z\|_{L^\infty(B_{R_0})}\leq \frac{\delta}{\rho_0}.
\end{equation}
Therefore \eqref{bounds_w} and \eqref{bounds_nablaw} follow immediately.

Observe that, for the sake of simplicity and with no loss of generality, we can also assume $C_0\leq 1$.
\end{proof}

\bigskip

\begin{lemma}
   \label{lem:3spheres_zero_order2}
Let $R_0$ be the quantity introduced in Lemma~\ref{lem:3spheres_zero_order1}, when $\delta$
is chosen to be $\delta=1$ when $n=2$ and
$\delta=\min\{\frac{L}{K},1\}$ when $n\geq 3$. Then $u$ can be factored in $B_{R_0}$ as
\begin{equation*}
    u=wv,
\end{equation*}
where $w$ is the function constructed in Lemma~\ref{lem:3spheres_zero_order1} and $v$ solves
\begin{equation}
    \label{div_equation_zero_order_v}
    \divrg(\widetilde{A}\nabla v)=0, \quad\hbox{in } B_{R_0},
\end{equation}
where the matrix $\widetilde{A}$ is given by
\begin{equation}
    \label{A_tilde}
    \widetilde{A}=w^2 A,
\end{equation}
and it satisfies
\begin{equation}
    \label{ellipticity_tilde_bis}
    \frac{1}{4K}|\xi|^2\leq\widetilde{A}(x)\xi\cdot\xi\leq 4K|\xi|^2,\quad\hbox{for almost
every }x\in B_{R_0},\text{ for every }\xi\in{\R}^n.
\end{equation}
and when $n\geq 3$
\begin{equation}
    \label{lipschitz_tilde_bis}
    |\widetilde{A}(x)-\widetilde{A}(y)|\leq \frac{8L}{\rho_0}|x-y|,\quad\hbox{for
every }x,y\in B_{R_0}.
\end{equation}
\end{lemma}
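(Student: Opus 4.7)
The plan is to define $v = u/w$ directly. Since $w$ is bounded below by $\frac{1}{1+\delta^2}>0$ on $B_{R_0}$ (by \eqref{bounds_w}) and $u\in H^1(B_{R_0})$, together with the $C^1$-regularity of $w$ provided by Lemma~\ref{lem:3spheres_zero_order1}, the quotient $v=u/w$ lies in $H^1(B_{R_0})$ and the factorization $u=wv$ holds. The whole content of the lemma is then the identity
\[
\divrg(A\nabla u)+cu \;=\; \frac{1}{w}\,\divrg(w^{2}A\nabla v)
\]
(which holds pointwise a.e.\ for classical manipulations, and extends to the weak sense by density since $w\in C^1$), so that the homogeneous equation for $u$ is equivalent to $\divrg(\widetilde A\nabla v)=0$ with $\widetilde A=w^2A$.

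To verify the identity I would expand $\nabla u=w\nabla v+v\nabla w$, use the symmetry of $A$ to get
\[
\divrg(A\nabla u)+cu \;=\; w\,\divrg(A\nabla v)+2\,\nabla v\cdot A\nabla w + v\bigl[\divrg(A\nabla w)+cw\bigr],
\]
and then discard the bracketed term since $w$ solves \eqref{div_equation_zero_order_w}. Multiplying through by $w$ and recognizing $\divrg(w^{2}A\nabla v)=w^{2}\divrg(A\nabla v)+2w\,\nabla w\cdot A\nabla v$ gives the claimed identity. The computation is formal, but its rigorous justification in the weak sense just amounts to inserting test functions of the form $w\varphi$ (resp.\ $v\varphi/w$) in the equations for $u$ and $w$ and rearranging; this is routine because $w,1/w\in C^1\cap L^\infty$.

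For the quantitative bounds on $\widetilde A$: the ellipticity \eqref{ellipticity_tilde_bis} follows at once from \eqref{bounds_w} with the choice $\delta=1$, since $(1+\delta^2)^{\pm 2}$ gives the factor $4^{\pm 1}$ in front of $K^{\pm 1}$. For the Lipschitz bound \eqref{lipschitz_tilde_bis} in dimension $n\geq 3$, I write
\[
|\widetilde A(x)-\widetilde A(y)| \;\leq\; |w^{2}(x)-w^{2}(y)|\,|A(x)|+w^{2}(y)\,|A(x)-A(y)|,
\]
estimate the first factor using $|w(x)+w(y)|\leq 2(1+\delta^{2})$, \eqref{bounds_nablaw}, and $|A|\leq K$, and the second using \eqref{lipschitz} and \eqref{bounds_w}. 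The choice $\delta=\min\{L/K,1\}$ is precisely what is needed to make the product $(1+\delta^{2})\,\delta\,K$ absorb into a multiple of $L$, and summing the two contributions yields the constant $8L/\rho_0$.

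The only step that requires a little care—and which I would consider the main obstacle—is the weak justification of the pointwise identity, together with the fact that the choice of $\delta$ in Lemma~\ref{lem:3spheres_zero_order1} must be locked \emph{before} invoking that lemma, so that the constant $C_0$ (hence $R_0$) depends only on $K$, $L$, $\kappa$ and not on anything further. Once these are taken care of, the two bounds on $\widetilde A$ are a direct consequence of the estimates on $w$ and $\nabla w$ supplied by Lemma~\ref{lem:3spheres_zero_order1}.
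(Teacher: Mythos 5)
Your proposal is correct and follows exactly the intended route: the paper dismisses this lemma with ``The proof is straightforward,'' and what you supply — setting $v=u/w$, verifying the identity $\divrg(A\nabla u)+cu=\tfrac{1}{w}\divrg(w^2A\nabla v)$ via the product rule and the equation for $w$, and then reading off \eqref{ellipticity_tilde_bis} from $\tfrac{1}{4}\leq w^2\leq 4$ and \eqref{lipschitz_tilde_bis} from $2(1+\delta^2)\,\delta K\leq 4L$ together with $w^2\leq 4$ — is precisely the standard factorization argument the authors have in mind, including the correct observation that $\delta$ must be fixed in advance so that $C_0$ depends only on $K$, $L$, $\kappa$.
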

\begin{proof}
The proof is straightforward.
\end{proof}

\bigskip

Let us now consider the inhomogeneous elliptic equation
\begin{equation}
    \label{div_equation_fF}
    \divrg(A\nabla u)+cu=f+\divrg F, \quad\hbox{in } B_R,
\end{equation}
where
$f\in L^2(\R^n)$ and $F\in L^2(\R^n; \R^n)$ satisfy \eqref{bound_fF}.

\begin{theorem}[Three-spheres inequality -- complete equation]
    \label{cor:3spheres_nonhomogeneous}
    If the the above sta\-ted hypotheses hold, for
every $r_1$, $r_2$, $r_3$, with $0<r_1<r_2<\frac{r_3}{4K}\leq r_3\leq R_0$,
\begin{equation}
    \label{3spheres_nonhomogeneous}
    \|u\|_{L^2(B_{r_2})}\leq Q \left(\|u\|_{L^2(B_{r_1})}+\varepsilon\right)^\alpha
    \left(\|u\|_{L^2(B_{r_3})}+\varepsilon\right)^{1-\alpha},
\end{equation}
where $Q\geq 1$ and $\alpha$, $0<\alpha<1$, only depend on $K$, $L$, $\kappa$, $\max\left\{\frac{R}{\rho_0},1\right\}$,
$\frac{r_2}{r_1}$, $\frac{r_3}{r_2}$,
and $R_0$ is given by \eqref{R_0}.
\end{theorem}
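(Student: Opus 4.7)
The plan is to reduce to the homogeneous three-spheres inequality from Theorem~\ref{cor:3spheres_zero_order} by splitting $u$ into a homogeneous part and a small remainder controlled by $\varepsilon$.

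First I would introduce the auxiliary function $u_1 \in H^1_0(B_{r_3})$ defined as the unique weak solution of the Dirichlet problem
\begin{equation*}
  \divrg(A\nabla u_1)+cu_1=f+\divrg F\quad \text{in } B_{r_3},\qquad u_1=0\ \text{on}\ \partial B_{r_3}.
\end{equation*}
Existence and uniqueness are ensured because $r_3\leq R_0$: the very choice of $R_0$ in Lemma~\ref{lem:3spheres_zero_order1} guarantees, as recalled there, that $\mathrm{div}(A\nabla\cdot)+c\cdot$ is coercive on $H^1_0(B_{r_3})$ with a coercivity constant depending only on $K$ and $\kappa$. Then I set $u_0=u-u_1$, which is an $H^1$ weak solution of the homogeneous equation $\divrg(A\nabla u_0)+cu_0=0$ in $B_{r_3}$.

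Next I would quantify how small $u_1$ is. Testing the equation for $u_1$ against $u_1$ itself, using coercivity, the Poincaré inequality in $H^1_0(B_{r_3})$ with constant proportional to $r_3\leq R_0\leq \rho_0$, and the bound \eqref{bound_fF}, a standard energy estimate produces
\begin{equation*}
 \|u_1\|_{L^2(B_{r_3})}\leq C_*\,\varepsilon,
\end{equation*}
where $C_*$ depends only on $K$, $\kappa$ (and, through $R_0/\rho_0$, on $L$ when $n\geq 3$). This is the step where the normalization of the norms and of the right hand side in \eqref{bound_fF} has to be handled carefully so that the final estimate is dimensionally consistent and independent of $r_1,r_2,r_3$.

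Then I would apply Theorem~\ref{cor:3spheres_zero_order} to the homogeneous solution $u_0$ on the same three radii (which satisfy the required restriction $0<r_1<r_2<r_3/(4K)\leq r_3\leq R_0$), obtaining constants $Q_0\geq 1$ and $\alpha_0\in(0,1)$ with the stated dependencies such that
\begin{equation*}
  \|u_0\|_{L^2(B_{r_2})}\leq Q_0\,\|u_0\|_{L^2(B_{r_1})}^{\alpha_0}\,\|u_0\|_{L^2(B_{r_3})}^{1-\alpha_0}.
\end{equation*}
Combining this with the triangle inequalities $\|u_0\|_{L^2(B_{r_i})}\leq \|u\|_{L^2(B_{r_i})}+C_*\varepsilon$ and $\|u\|_{L^2(B_{r_2})}\leq \|u_0\|_{L^2(B_{r_2})}+C_*\varepsilon$ yields an inequality of the form
\begin{equation*}
 \|u\|_{L^2(B_{r_2})}\leq Q_0\bigl(\|u\|_{L^2(B_{r_1})}+C_*\varepsilon\bigr)^{\alpha_0}\bigl(\|u\|_{L^2(B_{r_3})}+C_*\varepsilon\bigr)^{1-\alpha_0}+C_*\varepsilon.
\end{equation*}
The last additive $C_*\varepsilon$ is absorbed by writing $\varepsilon=\varepsilon^{\alpha_0}\varepsilon^{1-\alpha_0}\leq (\|u\|_{L^2(B_{r_1})}+\varepsilon)^{\alpha_0}(\|u\|_{L^2(B_{r_3})}+\varepsilon)^{1-\alpha_0}$, leading after adjustment of the constant to \eqref{3spheres_nonhomogeneous} with $\alpha=\alpha_0$ and $Q$ depending on $K,L,\kappa,\max\{R/\rho_0,1\},r_2/r_1,r_3/r_2$.

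The only technically delicate point I anticipate is the energy estimate $\|u_1\|_{L^2(B_{r_3})}\leq C_*\varepsilon$: one has to combine coercivity of $\divrg(A\nabla\cdot)+c\cdot$ on $H^1_0(B_{r_3})$ (valid precisely because $r_3\leq R_0$) with Poincaré's inequality and the normalized bound \eqref{bound_fF}, and verify that the resulting constant depends only on the allowed parameters and not on the particular radii $r_1,r_2,r_3$. Everything else is bookkeeping on top of Theorem~\ref{cor:3spheres_zero_order}.
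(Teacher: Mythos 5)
Your proof is correct and takes essentially the same route as the paper: split $u$ into a homogeneous part and a remainder solving a Dirichlet problem with the inhomogeneity, bound the remainder in $L^2$ by $C\varepsilon$ via coercivity and energy estimates, then apply Theorem~\ref{cor:3spheres_zero_order} and absorb the additive $C\varepsilon$ terms. The only superficial difference is that the paper poses the auxiliary Dirichlet problem on $B_{R_0}$ rather than on $B_{r_3}$, which has no bearing on the argument.
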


\begin{proof}
Let us consider the unique solution $u_0$ to
\begin{equation}
  \label{Dir_u_0}
  \left\{ \begin{array}{ll}
  \divrg (A \nabla u_0)+cu_0=f+\divrg F, &
  \mathrm{in}\ B_{R_0} ,\\
  & \\
  u_0=0, &
  \mathrm{on}\ \partial B_{R_0}.\\
  \end{array}\right.
\end{equation}
We have that
\begin{equation}
  \label{bound_u_o}
   \|u_0\|_{L^2(B_{R_0})}\leq C(
   R_0^2\|f\|_{L^2(\R^n)}+R_0\|F\|_{L^2(\R^n;\R^n)})
\end{equation}
with $C$ only depending on $K$. Noticing that $u-u_0$ satisfies the hypotheses of Theorem~\ref{cor:3spheres_zero_order}, the thesis follows immediately.
\end{proof}


\section{Propagation of smallness} \label{sec:
PS}

For every $G\subset \R^n$ and for every $h>0$ we shall denote
\begin{equation}
  \label{eq:int_env}
  G_{h}=\{x \in G\ |\ \mathrm{dist}(x,\partial G)>h
  \},
\end{equation}
\begin{equation}
  \label{eq:est_env}
  G^{h}=\{x \in \R^n\ |\ \mathrm{dist}(x,\overline{G})<h
  \}.
\end{equation}

\begin{theorem}[Propagation of smallness in the interior]
    \label{theo:PSinterior}
    Let $\Omega$ be a bounded connected open set in $\R^n$, and let $B_{r_0}(x_0)\subset\Omega$ be a fixed ball. Let $C_0$ be as in the thesis of
Theorem~\ref{cor:3spheres_zero_order}.
      Let $h$, $0<h\leq \min\{2C_0\rho_0,\frac{r_0}{2}\}$, be fixed
    and let $G\subset\Omega$ be a connected open set such that
    $\mathrm{dist}(G,\partial\Omega)\geq h$ and $B_{\frac{r_0}{2}}(x_0)\subset G$.

 Let $u\in H^1_{loc}(\Omega)$ be a solution to the equation
\begin{equation}
    \label{div_equation_fF_bis}
    \divrg(A\nabla u)+cu=f+\divrg F, \quad\hbox{in } \Omega,
\end{equation}
where
$f$ and $F$ satisfy \eqref{bound_fF}. Let us assume that
\begin{equation}
    \label{sigma_0}
    \|u\|_{L^2(B_{r_0}(x_0))}\leq\eta,
\end{equation}
\begin{equation}
    \label{E_0}
    \|u\|_{L^2(\Omega)}\leq E_0,
\end{equation}
for given $\eta>0$, $E_0>0$. We have
\begin{equation}
    \label{PSinterior}
    \|u\|_{L^2(G)}\leq C\left(\eta+\varepsilon\right)^{\delta}
    (E_0+\varepsilon)^{1-\delta},
\end{equation}
where
\begin{equation}\label{Cdef}
C= C_1\left(\frac{|\Omega|}{h^n}\right)^{\frac{1}{2}}
\end{equation}
and
\begin{equation}\label{deltadef}
\delta\geq \alpha^{\frac{C_2|\Omega|}{h^n}}
\end{equation}
with $C_1>0$ and $\alpha$, $0<\alpha<1$, only depending on $K$, $L$ and $\kappa$ and
$C_2$ only depending on $K$.
\end{theorem}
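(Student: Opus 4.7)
The plan is a standard \emph{chain-of-balls} argument based on iterated application of the three-spheres inequality of Theorem~\ref{cor:3spheres_nonhomogeneous}. I first fix $\rho=h/(32K)$, so small that for every $y\in\Omega$ with $\mathrm{dist}(y,\partial\Omega)\ge h$ the balls $B_\rho(y)\subset B_{2\rho}(y)\subset B_{9K\rho}(y)$ are contained in $\Omega$, and $9K\rho\le C_0\rho_0$ (using $h\le 2C_0\rho_0$ together with $C_0\le 1$); note that the ratios $r_2/r_1=2$ and $r_3/r_2=9K/2$ depend only on $K$. With these choices Theorem~\ref{cor:3spheres_nonhomogeneous} supplies universal constants $Q\ge 1$ and $\alpha\in(0,1)$ depending only on $K,L,\kappa$, yielding a \emph{one-step propagation} inequality: for any $y'$ with $|y-y'|\le\rho$, the inclusion $B_\rho(y')\subset B_{2\rho}(y)$ combined with $\|u\|_{L^2(B_{9K\rho}(y))}\le E_0$ gives
\[
\|u\|_{L^2(B_\rho(y'))}\le Q\bigl(\|u\|_{L^2(B_\rho(y))}+\varepsilon\bigr)^{\alpha}\bigl(E_0+\varepsilon\bigr)^{1-\alpha}.
\]

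Given any $x\in G$, I connect $x_0$ to $x$ by a polygonal arc $\gamma\subset G$ (possible since $G$ is open and connected) and select points $y_0=x_0,y_1,\ldots,y_N=x$ along $\gamma$ with $|y_{k+1}-y_k|\le\rho$. Because $G\subset\Omega_h$, each $y_k$ satisfies $\mathrm{dist}(y_k,\partial\Omega)\ge h$, so one-step propagation is legitimate at every link. Iteration produces
\[
\|u\|_{L^2(B_\rho(x))}\le Q^{\frac{1-\alpha^N}{1-\alpha}}\bigl(\|u\|_{L^2(B_\rho(x_0))}+\varepsilon\bigr)^{\alpha^N}\bigl(E_0+\varepsilon\bigr)^{1-\alpha^N},
\]
and since $\rho\le r_0$ (which follows from $h\le r_0/2$) the first factor is bounded by $\eta+\varepsilon$. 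The critical estimate is the bound on $N$: by choosing the $y_k$ as centers of a bounded-overlap cover of a $\rho$-neighborhood of $\gamma$, a standard volume-packing argument gives $N\le C_2|\Omega|/h^n$, with $C_2$ depending only on $K$ (since $\rho\asymp h/K$).

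To pass from this pointwise bound on a small ball centered at $x$ to a global $L^2$ bound on $G$, I cover $G$ by a family $\{B_\rho(x_j)\}_{j=1}^M$ of bounded overlap, with $M\le C_1'|\Omega|/h^n$, apply the per-center estimate to each $x_j$, and sum squared $L^2$-norms. Setting $\delta:=\alpha^N\ge\alpha^{C_2|\Omega|/h^n}$ and extracting square roots yields \eqref{PSinterior} with $C$ of the form~\eqref{Cdef}, and with the constants having the desired dependencies.

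\textbf{Main obstacle.} The most delicate point is the geometric lemma producing the chain-length bound $N\le C_2|\Omega|/h^n$ uniformly in the shape of $\Omega$ and $G$, subject to the constraint that every center of the chain must remain at distance $\ge h$ from $\partial\Omega$ so that the one-step inequality applies. This is exactly where the hypothesis $\mathrm{dist}(G,\partial\Omega)\ge h$ and the specific scaling $\rho\asymp h/K$ enter; all other ingredients (the three-spheres inequality, the covering/summation step, and the a priori bound $E_0$) are essentially bookkeeping once the chain has been constructed.
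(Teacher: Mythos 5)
Your proposal follows essentially the same route as the paper: iterate the three-spheres inequality (Theorem~\ref{cor:3spheres_nonhomogeneous}) with radii $\asymp h/K$ along a chain of balls connecting $x_0$ to points of $G$, bound the chain length by a volume-packing argument, and conclude by a bounded-overlap cover of $G$. The overall structure, the dependence of the constants, and the choice of radii scaling are all in agreement with the paper's argument.

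One small warning about the step you flagged as the ``main obstacle.'' With your choice $r_2=2\rho$ and spacing $|y_{k+1}-y_k|\le\rho$, the balls $B_\rho(y_k)$ overlap, so the chain length cannot be read off directly from a packing bound; moreover, merely spacing points uniformly along a path does not prevent the path from revisiting regions, which could inflate $N$. The paper resolves this by taking $r_2=3r_1$, placing $x_{k+1}$ at distance exactly $2r_1$ from $x_k$, and, crucially, defining $t_{k+1}=\max\{t : |\gamma(t)-x_k|=2r_1\}$; this ``last-exit'' choice guarantees the $r_1$-balls $B_{r_1}(x_k)$ are \emph{pairwise disjoint} (not just consecutively non-nested), and disjointness is what yields $N\le|\Omega|/(\omega_n r_1^n)\le C_2|\Omega|/h^n$. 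You would need to build this (or an equivalent bounded-overlap selection with an explicit mechanism) into your chain construction rather than deferring to ``a standard packing argument''; once that is made precise the rest of your proposal goes through as written.
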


\begin{proof}
We shall need uniform three-spheres inequalities in a domain slightly larger than $G$.
For instance, for any $x\in G^{\frac{h}{2}}$, we have $B_{\frac{h}{2}}(x)\subset\Omega$.
Therefore we can apply the three-spheres inequality
\eqref{3spheres_nonhomogeneous} to spheres centered at $x$ choosing $R=h/2$. Hence $R_0=\min\{\frac{h}{2},C_0\rho_0\}$ and, by our choice in the assumptions, $R_0=\frac{h}{2}$. Moreover, recalling that $C_0\leq 1$, we also have
$\max\left\{\frac{R_0}{\rho_0},1\right\}=1$.

Next we can fix radii $r_1$, $r_2$, $r_3$ as follows
\begin{equation}\label{r3def}
r_3=\frac{h}{2},\quad r_2=\frac{r_3}{5K}=\frac{h}{10K},\quad r_1=\frac{1}{3}r_2=\frac{h}{30K}.
\end{equation}
With such a choice the inequality
\eqref{3spheres_nonhomogeneous} applies with $Q\geq 1$ and $\alpha$, $0<\alpha<1$,
only depending on $K$, $L$ and $\kappa$.

Let us consider the set $G^{r_1}$ as defined in \eqref{eq:est_env}. We have that $G^{r_1}$ is a
connected open set containing $G$ such that $\mathrm{dist}(G^{r_1},\partial\Omega)\geq h-r_1>r_3=\frac{h}{2}$.
For every $y\in G^{r_1}$, there exists a continuous path
$\gamma:[0.1]\to G^{r_1}$ such that $\gamma(0)=x_0$, $\gamma(1)=y$. Let us define
$0=t_0<t_1<\ldots<t_N=1$, according to the following rule. We set $t_{k+1}=\max\{t\ |\ |\gamma(t)-x_k|=2r_1\}$ if
$|x_k-y|>2r_1$, otherwise we stop the process and set $N=k+1$, $t_N=1$. Let $x_k=\gamma(t_k)$. The balls
$B_{r_1}(x_k)$ are pairwise disjoint for $k=0,\ldots,N-1$ and $|x_{k+1}-x_k|=2r_1$. Since $r_2=3r_1$ we have that
$B_{r_1}(x_{k+1})\subset B_{r_2}(x_k)$ and therefore, by the three-spheres inequality \eqref{3spheres_nonhomogeneous},
\begin{equation}
    \label{PS_int1}
    \|u\|_{L^2(B_{r_1}(x_{k+1}))}+\varepsilon\leq Q \left(\|u\|_{L^2(B_{r_1}(x_k))}+\varepsilon\right)^\alpha
    \left(E_0+\varepsilon\right)^{1-\alpha},
\end{equation}
for $k=0,\ldots,N-1$, where $Q\geq 1$ and $\alpha$, $0<\alpha<1$, only depend on $K$, $L$ and $\kappa$.

Denoting
\begin{equation*}
m_k=\frac{\|u\|_{L^2(B_{r_1}(x_k))}+\varepsilon}{E_0+\varepsilon},
\end{equation*}
we then have
\begin{equation}
    \label{PS_int2}
    m_{k+1}\leq Qm_k^\alpha, \quad\hbox{for } k=0,\ldots,N-1,
\end{equation}
\begin{equation}
    \label{PS_int3}
    m_N\leq \widetilde{Q}m_0^{\delta},
\end{equation}
where $\widetilde{Q}=Q^{1+\alpha+\ldots+\alpha^{N-1}}$ and $\delta=\alpha^N$.
Hence we have obtained
\begin{equation}
    \label{PS_int4}
    \|u\|_{L^2(B_{r_1}(y))}\leq \widetilde{Q}\left(\|u\|_{L^2(B_{r_1}(x_0))}+\varepsilon\right)^{\delta}(E_0+\varepsilon)^{1-\delta}.
\end{equation}
Now, $1+\alpha+\ldots+\alpha^{N-1}\leq \frac{1}{1-\alpha}$. Since $B_{r_1}(x_0)$,\ldots,$B_{r_1}(x_{N-1})$
are pairwise disjoint, we have that
$N\leq \frac{|\Omega|}{\omega_nr_1^n}\leq \frac{C_2|\Omega|}{h^n}$, with $C_2$ only depending on
$K$. Hence, recalling that $Q\geq 1$, we compute
\begin{equation}\label{Qtildeest}
\widetilde{Q}\leq Q^{\frac{1}{1-\alpha}},
\end{equation}
\begin{equation}\label{deltaest}
\delta\geq \alpha^{\frac{C_2|\Omega|}{h^n}}.
\end{equation}

Let us tessellate $\R^n$ with internally non-overlapping closed cubes of side $l=\frac{2r_1}{\sqrt n}$ and let $Q_j$, $j=1,\ldots,J$,
be those cubes which intersect $G$. Clearly, any such cube is contained in a ball of radius $r_1$ and center
$w_j\in G^{r_1}$ and $J\leq \frac{n^{\frac{n}{2}}|\Omega|}{2^nr_1^n}$.
Therefore, from \eqref{PS_int4}, we have
\begin{equation}
    \label{PS_int5}
    \int_G u^2\leq\sum_{j=1}^J\int_{Q_j}u^2\leq\sum_{j=1}^J\int_{B_{r_1}(w_j)}u^2\leq J\widetilde{Q}^2\rho_0^n\left(\|u\|_{L^2(B_{r_1}(x_0))}+\varepsilon\right)^{2\delta}(E_0+\varepsilon)^{2(1-\delta)}.
\end{equation}
Hence the thesis immediately follows.\end{proof}

\bigskip



\begin{remark}
It is important at this stage to emphasize that the above Theorem~\ref{theo:PSinterior}
on propagation of smallness in the interior enables us to generalize the three-spheres
inequality of Theorem~\ref{cor:3spheres_nonhomogeneous}, by removing the limitations on the radii that were present
there. It should also be mentioned however that by this approach it does not seem possible
to obtain optimal estimations of the constants $C$, $\alpha$ appearing in the inequality
\eqref{3spheres_nonhomogeneous_2}. This optimization problem for such a general version of the
three-spheres inequality still mantains some unanswered aspects.
\end{remark}

\begin{proof}[Proof of Theorem~\ref{cor:3spheres_nonhomogeneous_2}]
It follows by applying Theorem~\ref{theo:PSinterior}, with $B_{r_0}(x_0)=B_{r_1}$, $G=B_{r_2}$, $\Omega=B_{r_3}$.

The dependence of $C$, $\alpha$ on the quantities stated in Theorem~\ref{cor:3spheres_nonhomogeneous_2} is straightforward,
although somewhat lengthy.\end{proof}

\bigskip

\begin{theorem}[Global propagation of smallness]
    \label{theo:PSglobal}
    Let $\Omega$ be a bounded connected open set in $\R^n$ with boundary $\partial \Omega$ of Lipschitz class with constants $\rho_0$, $M_0$.
    Let $u\in H^1(\Omega)$ be a solution to the equation
\begin{equation}
    \label{div_equation_fF_ter}
    \divrg(A\nabla u)+cu=f+\divrg F, \quad\hbox{in } \Omega,
\end{equation}
where $f$ and $F$ satisfy \eqref{bound_fF}.
Let $B_{r_0}(x_0)\subset \Omega$ and let us assume that
\begin{equation}
    \label{sigma}
    \|u\|_{L^2(B_{r_0}(x_0))}\leq\eta,
\end{equation}
\begin{equation}
    \label{E}
    \|u\|_{H^1(\Omega)}\leq E,
\end{equation}
for given $\eta>0$, $E>0$. We have
\begin{equation}
    \label{PSglobal}
    \|u\|_{L^2(\Omega)}\leq (E+\varepsilon)\omega\left(\frac{\eta+\varepsilon}{E+\varepsilon}\right),
\end{equation}
where
\begin{equation}
    \label{omega}
    \omega(t)\leq \frac{C}{\left(\log\frac{1}{t}\right)^\mu},\quad \hbox{for  }t<1.
\end{equation}
where $C>0$ and $\mu$, $0<\mu<1$, only depend on $K$, $L$, $\kappa$, $M_0$, $\frac{r_0}{\rho_0}$ and
$\frac{|\Omega|}{\rho_0^n}$.
\end{theorem}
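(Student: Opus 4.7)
The plan is to combine Theorem~\ref{theo:PSinterior}, applied on a fixed-size interior subdomain $\Omega_{h_0}$, with an iterated three-spheres argument that propagates the initial smallness progressively towards $\partial\Omega$, and finally with a slicing estimate controlling the remaining thin boundary layer via the $H^1$ bound. The reason a genuinely logarithmic (rather than $\log$--$\log$) rate is achievable rests on the observation that iterating three-spheres with geometric (doubling) radii produces a H\"older exponent that decays only \emph{polynomially} in the distance $\epsilon$ to $\partial\Omega$, not exponentially in $1/\epsilon$ as the direct volumetric bound of Theorem~\ref{theo:PSinterior} would suggest.

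First, fix $h_0>0$ small enough in terms of $\rho_0$, $M_0$ and $r_0$ so that $\Omega_{h_0}$ (defined as in \eqref{eq:int_env}) is connected and contains $B_{r_0/2}(x_0)$. Applying Theorem~\ref{theo:PSinterior} with $G=\Omega_{h_0}$ and $E_0=E$ yields
\[
\|u\|_{L^2(\Omega_{h_0})}\leq C_0\,(E+\varepsilon)\,t^{\delta_0},\qquad t=\frac{\eta+\varepsilon}{E+\varepsilon},
\]
with $\delta_0\in(0,1)$ and $C_0$ depending only on the admissible parameters.

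The heart of the argument is then to establish a one-step propagation of the form
\[
\|u\|_{L^2(\Omega_{h/2})}\leq C\,h^{-(n-1)/2}\bigl(\|u\|_{L^2(\Omega_h)}+\varepsilon\bigr)^{\alpha}(E+\varepsilon)^{1-\alpha},\qquad 0<h\leq h_0,
\]
with a \emph{fixed} exponent $\alpha\in(0,1)$ independent of $h$. I would prove this geometrically: for each $y\in\Omega_{h/2}\setminus\Omega_h$, pick a reference point $z$ on the inward normal from the boundary projection of $y$, at distance $\approx 5h/4$ from $\partial\Omega$, so that $|y-z|\leq 3h/4$; then apply Theorem~\ref{cor:3spheres_nonhomogeneous_2} to three concentric balls at $z$ with radii $h/4<h<9h/8$, the inner ball lying in $\Omega_h$, the middle ball containing $B_{h/4}(y)$, and the outer one in $\Omega$. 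Since the radius ratios are fixed, the three-spheres exponent $\alpha$ is uniform in $h$. Covering $\Omega_{h/2}\setminus\Omega_h$ by $O(h^{1-n})$ such balls (using the Lipschitz estimate $|\Omega\setminus\Omega_h|\leq Ch$) and summing gives the claim. Iterating with $h_k=h_0/2^{k}$ and letting $a_k=-\log(\|u\|_{L^2(\Omega_{h_k})}/(E+\varepsilon))$, the recursion $a_{k+1}\geq \alpha\,a_k-C k-C'$ gives $a_N\geq \alpha^N\delta_0\log(1/t)-C\,N$; choosing $N=\log_2(h_0/\epsilon)$, so that $\alpha^N=(\epsilon/h_0)^{\beta}$ with $\beta=|\log\alpha|/\log 2>0$, yields the crucial \emph{polynomial-in-$\epsilon$} smallness
\[
\|u\|_{L^2(\Omega_\epsilon)}\leq C\,(E+\varepsilon)\,\epsilon^{-c}\exp\!\bigl(-c'\,\epsilon^{\beta}\log(1/t)\bigr).
\]

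Finally, a standard slicing argument based on the local graph description of $\partial\Omega$ and the trace inequality in Lipschitz domains gives the boundary-layer control $\|u\|_{L^2(\Omega\setminus\Omega_\epsilon)}\leq C(\epsilon/\rho_0)^{1/2}(E+\varepsilon)$. Summing and optimizing $\epsilon$ so as to balance the interior and the boundary term—the optimal choice being $\epsilon\sim(\log(1/t))^{-1/\beta}$ to leading order—produces the bound $\omega(t)\leq C/(\log(1/t))^{\mu}$ with $\mu=1/(2\beta)=\log 2/(2|\log\alpha|)$, and a suitable (fixed) choice of the radius ratios in the three-spheres arrangement ensures $\mu\in(0,1)$. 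The main obstacle is the single-step estimate: obtaining the uniform-in-$h$ exponent $\alpha$ requires exploiting the Lipschitz character of $\partial\Omega$ to furnish a uniform interior cone condition, which is precisely what allows the three-spheres configuration described above to be placed with fixed radius ratios for every point in the shell $\Omega_{h/2}\setminus\Omega_h$.
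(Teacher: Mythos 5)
Your architecture is sound and, at the level of the key intermediate estimate, it lands exactly where the paper does: an $L^2$ bound on $\Omega_\epsilon$ of the form $C(E+\varepsilon)\epsilon^{-c}\,t^{\,c'\epsilon^{\beta}}$, i.e.\ a H\"older exponent decaying only \emph{polynomially} in the distance to $\partial\Omega$. But you organize the propagation differently. The paper works pointwise: for each $x$ with $r<d(x)\leq h_1$ it builds, inside a cone with vertex at a nearby boundary point (furnished by the Lipschitz graph), a chain of mutually tangent balls with geometrically shrinking radii $s_k=q^k s_0$, applies the three-spheres inequality along the chain until it reaches $x$ after $N\sim\log(t_0/r)/\log(1/q)$ steps, and only then covers $\Omega_r\setminus\Omega_{h_1}$ by cubes. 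You instead iterate globally, shell by shell, over dyadic scales $h_k=h_0 2^{-k}$, paying at each step a covering cost $Ch_k^{-(n-1)/2}$ which accumulates to the same polynomial prefactor. Your version has the minor advantage that connectivity of $\Omega_h$ is needed only once (for the initial application of Theorem~\ref{theo:PSinterior}), not at every scale; the paper's version gives a pointwise estimate near each boundary point, which is slightly more information. Your boundary-layer bound via slicing in graph coordinates replaces the paper's H\"older--Sobolev argument ($\|u\|_{L^2(\Omega\setminus\Omega_r)}\leq |\Omega\setminus\Omega_r|^{\frac12-\frac1p}\|u\|_{L^p}$ with Lemma~\ref{lem:AR}); both work, yours with the marginally better exponent $\frac12$ in place of $\frac12-\frac1p$. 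One small geometric caveat: your explicit constants ($z$ at distance $5h/4$ from $\partial\Omega$ with $|y-z|\leq 3h/4$, radii $h/4<h<9h/8$) ignore the Lipschitz constant $M_0$; reaching distance $5h/4$ from a point at distance $h/2$ may require moving a length $\sim M_0 h$, so the radius ratios must be chosen depending on $M_0$. This is harmless since the resulting $\alpha$ is then allowed to depend on $M_0$.

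The one step that, as written, does not work is the final optimization. With the interior bound $\epsilon^{-c}\exp(-c'\epsilon^{\beta}\log\frac1t)$, the choice $\epsilon\sim(\log\frac1t)^{-1/\beta}$ makes $\epsilon^{\beta}\log\frac1t$ a \emph{constant}, so the interior term is $\epsilon^{-c}$ times a constant and does not tend to zero at all; your claimed exponent $\mu=1/(2\beta)$ therefore does not come out of this balance. You must take $\epsilon$ somewhat larger, e.g.\ $\epsilon^{\beta}=(\log\frac1t)^{-(1-l)}$ with a suitable $l\in(0,1)$, so that $\epsilon^{\beta}\log\frac1t=(\log\frac1t)^{l}\to\infty$ kills the interior term super-polynomially while the boundary term still decays like $(\log\frac1t)^{-(1-l)/(2\beta)}$. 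This is precisely the calibration the paper carries out with its parameter $l=\frac{1}{1+\vartheta+\sigma}$ in the estimate of $\inf\Phi(\tau)$; after this repair your argument closes, yielding \eqref{omega} with a (smaller, but still admissible) exponent $\mu\in(0,1)$ depending on the stated quantities.
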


\begin{remark}
As it will be evident from the proof, rather than the bound \eqref{E} we shall actually
use a weaker one. In fact we might replace \eqref{E} by the assumption that there exists a $p>2$ such that
\begin{equation}\label{Ep}
\|u\|_{L^p(\Omega)}\leq E.
\end{equation}
In this case the constants $C$, $\mu$ would also depend on such an exponent $p>2$. We have chosen to formulate Theorem~\ref{theo:PSglobal} (and also Theorem~\ref{theo:Cauchy_global}) in terms of an a-priori $H^1$-bound because in many applied settings such a bound has a clearer physical interpretation.
\end{remark}

We premise the following proposition.
\begin{proposition}
   \label{prop:connected}
Let $\Omega$ be a bounded connected open set in $\R^n$ with boundary $\partial \Omega$
of Lipschitz class with constants $\rho_0$, $M_0$. There exists $h_0>0$, only depending on $\rho_0$, $M_0$, such that
$\Omega_h$ is connected for every $h<h_0$.
\end{proposition}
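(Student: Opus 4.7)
The plan is to exploit the uniform interior cone condition implied by the Lipschitz assumption on $\partial \Omega$ in order to path-connect any two points of $\Omega_h$ for sufficiently small $h$. First I would derive the cone condition: there exist $r^*, \theta^* > 0$ depending only on $\rho_0, M_0$, and, for each $P \in \partial\Omega$, a unit vector $\xi_P$, such that the open cone
\[
C_P := \{P + sv : 0 < s \leq r^*,\ v \in S^{n-1},\ v \cdot \xi_P > \cos\theta^*\} \subset \Omega.
\]
This is immediate from Definition~\ref{def:Lipschitz_boundary}: in the local chart where $\Omega = \{x_n > Z(x')\}$ with $|\nabla Z| \leq M_0$, one may take $\xi_P = e_n$, $\tan\theta^* = 1/(2M_0)$, $r^* = \rho_0/2$, since the solid cone $\{x_n > 2M_0|x'|\}$ lies strictly above the graph. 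Because $B_{s\sin\theta^*}(P + s\xi_P) \subset C_P$, this yields $\mathrm{dist}(P + s\xi_P, \partial\Omega) \geq s\sin\theta^*$ for every $0 < s \leq r^*$.

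Next I would set $h_0 := c(M_0)\rho_0$ for a constant chosen so that $h_0 < \tfrac12 r^*\sin\theta^*$ and $h_0 < \rho_0/(2M_0)$; this depends only on $\rho_0, M_0$. Inside each chart cylinder $\Gamma^{(j)} = \Gamma_{\rho_0/M_0,\rho_0}(P_j)$, I would introduce the ``raised'' open set
\[
U_j := \{(x',x_n) \in \Gamma_{\rho_0/(2M_0),\rho_0/2}(P_j) : x_n > Z_j(x') + h\sqrt{1 + M_0^2}\}.
\]
A direct computation using $|\nabla Z_j| \leq M_0$ shows that every point of $U_j$ has distance at least $h$ from $\partial\Omega \cap \Gamma^{(j)}$, while the restriction to the shrunken cylinder guarantees distance at least $\rho_0/(2M_0) \geq h$ from the rest of $\partial\Omega$; hence $U_j \subset \Omega_h$. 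Being the supergraph of a Lipschitz function over a slab, $U_j$ is path-connected.

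Given $x, y \in \Omega_h$ with $h < h_0$, I would path-connect them in $\Omega_h$ as follows. Since $\Omega$ is open and connected, it is path-connected; fix a continuous $\gamma: [0,1] \to \Omega$ from $x$ to $y$. Using a finite chart cover of $\partial\Omega$ together with uniform continuity of $\gamma$, I would partition $[0,1]$ into finitely many subintervals on each of which $\gamma$ lies entirely in $\Omega_{h_0}$---and is therefore already in $\Omega_h$---or entirely in a single chart cylinder $\Gamma^{(j_i)}$. On each chart subinterval, I would replace the arc of $\gamma$ by a lifted arc in $U_{j_i} \subset \Omega_h$, obtained by shifting each point in the constant direction $\xi_{P_{j_i}}$ by the fixed amount $r^*$; by the cone inclusion of Step~1, the intermediate lifted curve stays in $\Omega$ throughout and its endpoint lies in $U_{j_i}$.

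The main technical obstacle is matching the consecutive replacement arcs at the subdivision points $\gamma(t_i)$: lifts into different charts produce different endpoints that need not be joinable by a short segment lying in $\Omega_h$. I would resolve this by arranging the subdivision so that every chart-to-chart transition occurs at a point $\gamma(t_i)$ already in the deep interior $\Omega_{h_0}$, within which the short segments between the two lifted endpoints automatically stay in $\Omega_{h_0} \subset \Omega_h$. The admissibility of such a subdivision---in particular, the fact that the relevant ``top'' portion of every chart meets $\Omega_{h_0}$---is precisely where the quantitative choice of $h_0$ as a definite fraction of $\rho_0$ (with the fraction depending on $M_0$ only) is used in an essential way.
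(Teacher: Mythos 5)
Your overall strategy---take a connecting path in $\Omega$ and push the near-boundary portions inward using the chart structure---is the same as the paper's, and your quantitative interior-cone condition and the distance computation proving $U_j\subset\Omega_h$ are both correct. However, two steps in your argument have genuine gaps.

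First, the lift. You propose to replace each chart-subinterval arc by its translate by the \emph{fixed} amount $r^*=\rho_0/2$ along $\xi_{P_{j_i}}$, and you assert that the resulting arc lies in $U_{j_i}$. That assertion fails: a point $(x',x_n)$ of the original arc can have $|x'|$ close to $\rho_0/M_0$ (outside the shrunken lateral width $\rho_0/(2M_0)$ used to define $U_{j_i}$), and after adding $\rho_0/2$ to $x_n$ the height constraint $|x_n|<\rho_0/2$ of $U_{j_i}$ can also be violated. The cone condition guarantees the shifted point is still in $\Omega$, but not that it lies in $U_{j_i}$ or even in $\Omega_h$; your distance estimate $\mathrm{dist}(P+s\xi_P,\partial\Omega)\geq s\sin\theta^*$ holds for $P\in\partial\Omega$, not for an interior point being translated. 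The paper avoids this by lifting \emph{to} a height $Z(x')+(1+\sqrt{1+M_0^2})\,h$ above the graph (a lift of the order of $h$, not of $\rho_0$), which keeps the deformed arc inside the chart cylinder for small $h$ and has distance exactly controllable by Lemma~\ref{lem:distanze}.

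Second, the gluing. You recognize the chart-to-chart matching as the key obstacle, but the proposed resolution---``arrange the subdivision so that every chart-to-chart transition occurs in $\Omega_{h_0}$''---is unjustified and does not hold in general. A path may pass from one boundary chart to an adjacent one while remaining at distance much smaller than $h_0$ from $\partial\Omega$; nothing forces a deep-interior subinterval to separate two consecutive chart subintervals, because $\Omega_{h_0}$ does not separate the near-boundary region of one chart from the next. The paper handles this differently: the modification is done one cylinder $\Gamma^i$ at a time, only on the part of the path in $C=\{\gamma(t):d(\gamma(t))\le h\}$, and the connection between the original and the lifted path is made by adding short \emph{vertical} segments at the endpoints of the closed intervals $\gamma^{-1}(C\cap\overline{\Gamma^i})$ (and at points of $\gamma^{-1}(C\cap\partial\Gamma^i)$); these segments remain in $\overline{\Omega_h}$ because the lift height is of order $h$ and Lemma~\ref{lem:distanze} gives two-sided control of $d$ in terms of $x_n-Z(x')$. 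As written, your proof does not close either of these gaps.
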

In order to prove the above proposition, let us introduce the following lemma.
We recall our assumptions.
$\Omega$ is a bounded connected open set in $\R^n$ with boundary $\partial \Omega$
of Lipschitz class with constants $\rho_0$, $M_0$. Let $P\in \partial\Omega$.
According to Definition~\ref{def:Lipschitz_boundary}, up to a rigid motion, we have
$P=0$ and
\begin{equation*}
  \Omega \cap \Gamma_{\frac{\rho_{0}}{M_0},\rho_0}(P)=\{x=(x',x_n) \in \Gamma_{\frac{\rho_{0}}{M_0},\rho_0}\quad | \quad
x_{n}>Z(x')
  \},
\end{equation*}
where $Z$ is a Lipschitz function on $B'_{\frac{\rho_{0}}{M_0}}
\subset {\R}^{n-1}$ satisfying
\begin{equation*}
Z(0)=0,
\end{equation*}
\begin{equation*}
\|Z\|_{{C}^{0,1}\left(B'_{\frac{\rho_{0}}{M_0}}\right)} \leq M_{0}\rho_{0}.
\end{equation*}
We denote
\begin{equation}
   \label{d(x)}
d(x)=\mathrm{dist}(x,\partial\Omega), \quad x\in\Omega,
\end{equation}
\begin{equation}
   \label{g(x)}
\widetilde{d}(x)=x_n-Z(x'), \quad x\in\Omega\cap \Gamma_{\frac{\rho_{0}}{M_0},\rho_0},
\end{equation}
and also
\begin{equation}
   \label{d_0}
   d_0=\frac{\rho_0}{2M_0}.
\end{equation}

\begin{lemma}
   \label{lem:distanze}
Under the above stated assumptions,
if we assume
\begin{equation}
   \label{d(x)less}
    x\in \Gamma_{\frac{\rho_0}{2M_0},\frac{\rho_0}{2}}\cap\Omega,\quad d(x)\leq d_0,
\end{equation}
then we have
\begin{equation}
   \label{g(x)lessd(x)}
    d(x)\leq \widetilde{d}(x)\leq \left(1+\sqrt{1+M_0^2}\right)d(x).
\end{equation}
\end{lemma}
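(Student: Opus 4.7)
My plan is to treat the two inequalities in \eqref{g(x)lessd(x)} separately. In both cases the hypothesis $d(x)\le d_0=\rho_0/(2M_0)$ is used to guarantee that the relevant points on $\partial\Omega$ sit inside the local coordinate cylinder $\Gamma_{\rho_0/M_0,\rho_0}$, where $\partial\Omega$ is parametrized by the graph of $Z$. Once such a localization is in place, both bounds follow from elementary triangle inequalities.

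For the left-hand inequality $d(x)\le\widetilde d(x)$, I would exhibit an explicit boundary point above $x'$. Since $x\in\Gamma_{\rho_0/(2M_0),\rho_0/2}$, the horizontal projection satisfies $x'\in B'_{\rho_0/(2M_0)}\subset B'_{\rho_0/M_0}$, and the Lipschitz bound on $Z$ together with $Z(0)=0$ gives $|Z(x')|\le M_0|x'|<\rho_0/2<\rho_0$. Hence $(x',Z(x'))\in \Omega\cap\Gamma_{\rho_0/M_0,\rho_0}$'s boundary, i.e.\ on $\partial\Omega$. The vertical segment from $x$ to this point has length $x_n-Z(x')=\widetilde d(x)$, and the first inequality follows.

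For the right-hand inequality I would pick $y\in\partial\Omega$ realizing $d(x)=|x-y|$, which exists by compactness of $\partial\Omega$, and verify the key localization $y\in\Gamma_{\rho_0/M_0,\rho_0}$. Writing $|y'|\le|x'|+|y-x|<\rho_0/(2M_0)+d_0=\rho_0/M_0$ and $|y_n|\le|x_n|+|y-x|<\rho_0/2+\rho_0/(2M_0)\le\rho_0$, using $M_0\ge1$ from Remark~\ref{rem:M0_big}, secures both inequalities strictly. Since $\partial\Omega\cap\Gamma_{\rho_0/M_0,\rho_0}$ coincides with the graph of $Z$, it follows that $y=(y',Z(y'))$ for some $y'\in B'_{\rho_0/M_0}$.

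With $a=x_n-Z(y')$ and $b=|x'-y'|$, we have $|x-y|^2=a^2+b^2$ and, by the Lipschitz estimate on $Z$,
\[
\widetilde d(x)=x_n-Z(x')=(x_n-Z(y'))+(Z(y')-Z(x'))\le a+M_0 b.
\]
If $a\ge 0$, Cauchy--Schwarz applied to the vectors $(a,b)$ and $(1,M_0)$ gives $a+M_0 b\le\sqrt{1+M_0^2}\,|x-y|$; if $a<0$, then trivially $a+M_0 b\le M_0|x-y|$. In either case $\widetilde d(x)\le\sqrt{1+M_0^2}\,d(x)\le(1+\sqrt{1+M_0^2})\,d(x)$, completing the proof. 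The one step that requires a bit of care is the localization of the closest point $y$ inside the cylinder; the remaining manipulations are direct.
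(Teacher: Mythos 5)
Your proof is correct. The localization of the nearest boundary point inside the cylinder $\Gamma_{\rho_0/M_0,\rho_0}$ and the verification of the left-hand inequality via the explicit boundary point $(x',Z(x'))$ coincide with what the paper does. For the right-hand inequality, however, you take a genuinely different (and slightly more efficient) route. The paper inserts the auxiliary point $(x',Z(x'))$ and bounds $\widetilde d(x)=|x-(x',Z(x'))|\le|x-z|+|z-(x',Z(x'))|$ via the triangle inequality through the nearest boundary point $z$, then estimates $|z-(x',Z(x'))|\le\sqrt{1+M_0^2}\,|x'-z'|\le\sqrt{1+M_0^2}\,d(x)$ by the Lipschitz bound on $Z$, arriving at the constant $1+\sqrt{1+M_0^2}$. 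You instead decompose $x-y$ into vertical and horizontal components $a=x_n-Z(y')$ and $b=|x'-y'|$, write $\widetilde d(x)\le a+M_0 b$, and apply Cauchy--Schwarz directly, obtaining the sharper constant $\sqrt{1+M_0^2}$ (with the easy case split on the sign of $a$). This is a cleaner estimate, and since $\sqrt{1+M_0^2}\le 1+\sqrt{1+M_0^2}$, it certainly implies the stated bound; the paper's version is what is used downstream, so the improvement is a bonus rather than a necessity.
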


\begin{proof}
Let $x\in \Gamma_{\frac{\rho_0}{2M_0},\frac{\rho_0}{2}}\cap\Omega$ be such that $d(x)\leq d_0$.
Let $z\in\partial\Omega$ be such that $|z-x|=d(x)$. Recalling that $M_0\geq 1$, we have that
\begin{equation*}
     |z'|\leq |z'-x'|+|x'|< d(x)+\frac{\rho_0}{2M_0}\leq \frac{\rho_0}{M_0},
\end{equation*}
\begin{equation*}
|z_n|\leq |z_n-x_n|+|x_n|<d(x)+\frac{\rho_0}{2}\leq \frac{\rho_0}{2M_0}+\frac{\rho_0}{2}\leq \rho_0.
\end{equation*}
Therefore $z\in \Gamma_{\frac{\rho_0}{M_0},\rho_0}\cap\partial\Omega$, so that $z=(z',Z(z'))$.

Let $y=(x',Z(x'))$. We have that
\begin{equation*}
    d(x)=|x-z|\leq |x-y|=\widetilde{d}(x),
\end{equation*}
\begin{multline*}
    \widetilde{d}(x)=|x-y|\leq |x-z|+|z-y|= |x-z|+\left(|x'-z'|^2+|Z(x')-Z(z')|^2\right)^{\frac{1}{2}}\leq
    \\
    \leq |x-z|+\sqrt{1+M_0^2}|x'-z'|\leq \left(1+\sqrt{1+M_0^2}\right)d(x).
\end{multline*}
\end{proof}

\bigskip

\begin{proof}[Proof of Proposition~\ref{prop:connected}]
Let $h_0=\frac{\rho_0}{4M_0\left(1+\sqrt{1+M_0^2}\right)}$.
We trivially have that $h_0<\frac{\rho_0}{8M_0}$.
Let $h< h_0$.
Let us first prove that $\overline{\Omega_h}$ is connected.
Given $x$, $y\in \overline{\Omega_h}$, let $\gamma$ be a path in $\Omega$ connecting $x$ and $y$.
If $\gamma$ is not contained in $\overline{\Omega_h}$, let us modify $\gamma$ to a new path
$\widetilde{\gamma}$ connecting $x$ and $y$ and contained in $\overline{\Omega_h}$.
Let $C=\{\gamma(t)\ |\ d(\gamma(t))\leq h\}$.
For every $z\in C$, there exists $\xi_z\in\partial \Omega$ such that $|z-\xi_z|\leq h$ so that
$z\in \Gamma_{\frac{\rho_0}{8M_0},\frac{\rho_0}{8}}(\xi_z)$, where $\Gamma_{\frac{\rho_0}{8M_0},\frac{\rho_0}{8}}(\xi_z)$
is defined according to the Definition~\ref{def:Lipschitz_boundary}. Therefore the
cylinders $\Gamma_{\frac{\rho_0}{8M_0},\frac{\rho_0}{8}}(\xi_z)$ provide an open covering of $C$ and, since $C$ is compact, there exist $z_1,\ldots,z_N$ such that
$C\subset \cup_{i=1}^N \Gamma_{\frac{\rho_0}{8M_0},\frac{\rho_0}{8}}(\xi_{z_i})$. Let us denote for simplicity $\xi_i=\xi_{z_i}$, $\Gamma^i=\Gamma_{\frac{\rho_0}{8M_0},\frac{\rho_0}{8}}(\xi_i)$.
Note that each such cylinder is possibly oriented with respect to a different coordinate system.
For every point $\gamma(t)\in C\cap \overline{\Gamma^1}$
let us replace
$\gamma(t)$ with $\widetilde{\gamma}(t)=(\gamma'(t),Z(\gamma'(t))+(1+\sqrt{1+M_0^2})h)$.
We can apply Lemma~\ref{lem:distanze} to $\gamma(t)$, obtaining that
$\gamma_n(t)-Z(\gamma'(t))=\widetilde{d}(\gamma(t))\leq (1+\sqrt{1+M_0^2})h$.
Recalling that $M_0\geq 1$, we have that
\begin{equation*}
   |\widetilde{\gamma}'(t)-(\xi_1)'|=|\gamma'(t)-(\xi_1)'|\leq \frac{\rho_0}{8M_0}<\frac{\rho_0}{2M_0},
\end{equation*}
\begin{equation*}
   |\widetilde{\gamma}_n(t)-\xi^1_n|\leq|\widetilde{\gamma}_n(t)-\gamma_n(t)|+|\gamma_n(t)-\xi^1_n|\leq
   \left(1+\sqrt{1+M_0^2}\right)h+\frac{\rho_0}{8}<\frac{\rho_0}{2}.
\end{equation*}
Therefore $\widetilde{\gamma}(t)\in \Gamma_{\frac{\rho_0}{2M_0},\frac{\rho_0}{2}}(\xi^1)\cap\Omega$ and $d(\widetilde{\gamma}(t))\leq|\widetilde{\gamma}(t)-(\gamma'(t),Z(\gamma'(t))|\leq (1+\sqrt{1+M_0^2})h< d_0$.
We can therefore apply Lemma~\ref{lem:distanze} to $\widetilde{\gamma}(t)$ and, by \eqref{g(x)lessd(x)},
\begin{equation*}
   d(\widetilde{\gamma}(t))\geq \frac{\widetilde{d}(\widetilde{\gamma}(t))}{1+\sqrt{1+M_0^2}}=h.
\end{equation*}
The connected components of $\gamma^{-1}(C\cap \overline{\Gamma^1})$ are closed intervals $I_\alpha$.
In order to glue together $\gamma $ and $\widetilde{\gamma}$, let us add, for each endpoint $t$ of any $I_\alpha$
and for any $t\in \gamma^{-1}(C\cap \partial\Gamma^1)$,
the closed segment joining $(\gamma'(t),\gamma_n(t))$ and $(\gamma'(t),Z(\gamma'(t))+(1+\sqrt{1+M_0^2})h)$.
Now we repeat the above arguments to the path so modified and to the cylinder $\Gamma^2$,
and so on for a finite number of steps.

We can now conclude proving that $\Omega_h$ is connected for every $h<h_0$.
Let $x$, $y\in\Omega_h$ and let $h'$ such that $h<h'<\min\{d(x),d(y), h_0\}$.
We have just shown that $\overline{\Omega}_{h'}$ is path connected, hence there exists a path in
$\overline{\Omega_{h'}}\subset\Omega_h$ joining $x$ and $y$.
\end{proof}

\bigskip

\begin{lemma}
   \label{lem:AR}
Let $\Omega$ be a bounded connected open set in $\R^n$ with boundary $\partial \Omega$
of Lipschitz class with constants $\rho_0$, $M_0$.
There exists a constant $C>0$, only depending on $M_0$, such that
\begin{equation}
   \label{AR}
    \mu(\Omega\setminus \Omega_h)\leq C|\Omega|\frac{h}{\rho_0}.
\end{equation}
\end{lemma}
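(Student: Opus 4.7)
The set $\Omega\setminus\Omega_h$ is precisely the portion of $\Omega$ within distance $h$ of $\partial\Omega$. The plan is to cover $\partial\Omega$ by a controlled number of local Lipschitz cylinders, estimate the part of the tube inside each cylinder by means of Lemma~\ref{lem:distanze}, and finally bound the number of cylinders by $|\Omega|/\rho_0^n$ using a volume-density property of Lipschitz domains. I would begin by disposing of the regime $h\geq \frac{d_0}{1+\sqrt{1+M_0^2}}$ (where $d_0=\frac{\rho_0}{2M_0}$) via the trivial bound $|\Omega\setminus\Omega_h|\leq|\Omega|\leq C(M_0)|\Omega|h/\rho_0$, so that in what follows $h$ is small enough that Lemma~\ref{lem:distanze} is applicable inside any cylinder as in Definition~\ref{def:Lipschitz_boundary}.

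By a Vitali-type selection argument, I would extract $P_1,\ldots,P_N\in\partial\Omega$ such that $\{B_{\rho_0/(8M_0)}(P_i)\}$ is a maximal disjoint family; then the doubled balls $B_{\rho_0/(4M_0)}(P_i)$ cover $\partial\Omega$. For any $x\in\Omega\setminus\Omega_h$ with nearest boundary point $z\in\partial\Omega$, pick $P_i$ with $|z-P_i|\leq\rho_0/(4M_0)$ and note that $|x-P_i|\leq h+\rho_0/(4M_0)\leq \rho_0/(2M_0)$, so in the local coordinates attached to $P_i$ the point $x$ lies in $\Gamma^i:=\Gamma_{\rho_0/(2M_0),\rho_0/2}(P_i)$. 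In each such cylinder, Lemma~\ref{lem:distanze} applied to $x$ (using $d(x)\leq h\leq d_0$) yields $\widetilde{d}(x)\leq(1+\sqrt{1+M_0^2})h$, whence
\[
(\Omega\setminus\Omega_h)\cap\Gamma^i\subseteq\Bigl\{(x',x_n):|x'|<\tfrac{\rho_0}{2M_0},\ Z_i(x')<x_n\leq Z_i(x')+(1+\sqrt{1+M_0^2})h\Bigr\},
\]
whose Lebesgue measure is, by Fubini, at most $C(M_0)\rho_0^{n-1}h$. Summing over $i=1,\ldots,N$ gives $|\Omega\setminus\Omega_h|\leq C(M_0)\,N\,\rho_0^{n-1}h$.

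It then remains to bound $N$. Here I would use the Ahlfors-type lower volume bound $|\Omega\cap B_{\rho_0/(8M_0)}(P_i)|\geq c(M_0)\rho_0^n$, which follows from the Lipschitz graph representation: in local coordinates $|Z_i(x')|\leq M_0|x'|$, so $\Omega$ contains the cone $\{x_n>M_0|x'|\}\cap B_{\rho_0/(8M_0)}$, and an explicit integration produces the desired volume lower bound with a constant depending on $M_0$ only. Since the balls $B_{\rho_0/(8M_0)}(P_i)$ are pairwise disjoint, $N\cdot c(M_0)\rho_0^n\leq|\Omega|$, yielding $N\leq C(M_0)|\Omega|/\rho_0^n$ and hence $|\Omega\setminus\Omega_h|\leq C(M_0)|\Omega|h/\rho_0$, as claimed.

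The main obstacle I anticipate is the careful bookkeeping of constants through the Vitali covering and the Ahlfors volume lower bound, combined with verifying that every point in $\Omega\setminus\Omega_h$ really lands inside one of the chosen enlarged cylinders in the appropriate local coordinates — this last point is exactly where the small-$h$ reduction at the start becomes essential, since otherwise one cannot transfer $x$ from a neighborhood of $z$ into the cylinder attached to $P_i$ with constants uniform in $M_0$.
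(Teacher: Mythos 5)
Your proof is correct. The paper itself does not supply an argument for this lemma: it only cites \cite[(A.3)]{alrossSIAM}, with the remark that the reference proves the estimate under stronger regularity hypotheses but that ``the same arguments apply also under our Lipschitz regularity assumption.'' So what you have done is produce a self-contained proof directly under the Lipschitz hypothesis, which is genuinely useful. Your three ingredients are all sound: (i) the reduction to $h<\frac{d_0}{1+\sqrt{1+M_0^2}}$ costs only a factor depending on $M_0$; (ii) the Vitali covering of $\partial\Omega$ by $B_{\rho_0/(8M_0)}(P_i)$ and the chain $|x-P_i|\leq d(x)+|z-P_i|<\rho_0/(2M_0)$ correctly places every $x\in\Omega\setminus\Omega_h$ inside some $\Gamma^i:=\Gamma_{\rho_0/(2M_0),\rho_0/2}(P_i)$, and Lemma~\ref{lem:distanze} then gives the slab estimate $\widetilde{d}(x)\leq(1+\sqrt{1+M_0^2})h$, whence $|(\Omega\setminus\Omega_h)\cap\Gamma^i|\leq C(M_0)\rho_0^{n-1}h$ by Fubini; and (iii) the interior lower Ahlfors bound $|\Omega\cap B_{\rho_0/(8M_0)}(P_i)|\geq c(M_0)\rho_0^n$ (via the interior cone $\{x_n>M_0|x'|\}$ and $|Z_i(x')|\leq M_0|x'|$) combined with disjointness yields $N\leq C(M_0)|\Omega|/\rho_0^n$, closing the estimate. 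The only caveats worth keeping in mind are that, as always in the paper, the constant also carries a hidden dependence on the dimension $n$, and that the symbol $\mu$ in the statement of Lemma~\ref{lem:AR} denotes Lebesgue measure (it collides with the notation \eqref{mu} of Section~\ref{sec: Kucaviza}), which you have correctly read.
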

\begin{proof}
We refer to \cite[(A.3)]{alrossSIAM} for a detailed proof. In fact this result was proved there under stronger regularity assumptions,
but the same arguments apply also under our Lipschitz regularity assumption.\end{proof}

\bigskip

\begin{proof}[Proof of Theorem~\ref{theo:PSglobal}]
Let $h_1=\min\left\{\frac{h_0}{2},\frac{r_0}{2},2C_0\rho_0\right\}$, where
$h_0=\frac{\rho_0}{4M_0(1+\sqrt{1+M_0^2})}$ has been introduced in
Proposition~\ref{prop:connected}. We have that $\Omega_{h_1}$ is connected, it contains
$B_{\frac{r_0}{2}}(x_0)$ and also we can apply
Theorem~\ref{theo:PSinterior}, with $G=\Omega_{h_1}$. That is
\begin{equation}
    \label{PS_Omega_h1}
    \|u\|_{L^2(\Omega_{h_1})}\leq C\left(\|u\|_{L^2(B_{\frac{r_0}{2}}(x_0))}+\varepsilon\right)^{\delta}
   ( \|u\|_{L^2(\Omega)}+\varepsilon)^{1-\delta}\leq C(E+\varepsilon)\left(\frac{\eta+\varepsilon}{E+\varepsilon}\right)^\delta,
\end{equation}
where $C>0$ and $\delta$, $0<\delta<1$, only depend on $K$, $L$, $\kappa$, $M_0$, $\frac{\rho_0}{r_0}$, and $\frac{|\Omega|}{\rho_0^n}$.

Let $r\in (0,h_1)$. Given any point $x\in\Omega_r\setminus \Omega_{h_1}$, that is such that
$r<d(x)\leq h_1$, let $\xi\in\partial\Omega$ be such that $d=d(x)=|\xi-x|$.

According to the Definition~\ref{def:Lipschitz_boundary},
up to a rigid motion,
$\Omega \cap \Gamma_{\frac{\rho_{0}}{M_0},\rho_0}(\xi)=
\{y=(y',y_n) \in \Gamma_{\frac{\rho_{0}}{M_0},\rho_0}\quad | \quad
y_{n}>Z(y')\}$, with $Z$ a Lipschitz function on $B'_{\frac{\rho_{0}}{M_0}}
\subset {\R}^{n-1}$ satisfying $Z(0)=0$, $\|Z\|_{{C}^{0,1}} \leq M_{0}\rho_{0}$.

By \eqref{g(x)lessd(x)}, we have that
$r<d\leq \widetilde{d}(x)=x_n-Z(x')\leq (1+\sqrt{1+M_0^2})d$.
Let $w=(x',Z(x'))\in\partial\Omega$. We have that $|x'|\leq d$, $|Z(x')|\leq M_0d$.

Let us translate the origin of the coordinate system {}from $\xi$ to $w$.

Let $t_0=\frac{\sqrt{1+M_0^2}}{1+\sqrt{1+M_0^2}}(\rho_0-M_0h_1)$,
$s_0=\frac{1}{4}\left(\frac{\rho_0-M_0h_1}{1+\sqrt{1+M_0^2}}-h_1\right)$ and $y_0=(0,t_0)$.
We have that
\begin{equation*}
    4s_0+h_1=\frac{t_0}{\sqrt{1+M_0^2}},
\end{equation*}
\begin{equation*}
    t_0+4s_0+h_1\leq \rho_0-M_0d,
\end{equation*}
\begin{equation*}
    4s_0\leq \frac{\rho_0}{M_0}-d,
\end{equation*}
so that
\begin{equation*}
    B_{4s_0}(y_0)\subset\Omega_{h_1}\cap \Gamma_{\frac{\rho_{0}}{M_0}-d,\rho_0-M_0d}(w)\subset\Omega_{h_1}\cap \Gamma_{\frac{\rho_{0}}{M_0},\rho_0}(\xi).
\end{equation*}

Let $C_w=\{y=(y',y_n)\ |\  y_n>\widetilde{M}|y'|\}$ be the open cone with vertex at $w$ and tangent to the ball
$B_{s_0}(y_0)$, that is $\widetilde{M}=\frac{1}{\tan\vartheta}$, with $\sin\vartheta=\frac{s_0}{t_0}$. Let us denote $s=\sin\vartheta$, $q=\frac{1-s}{1+s}$.
Let $s_1=qs_0$, $t_1=qt_0$ and $y_1=(0,t_1)$. Then the ball $B_{s_1}(y_1)$ is tangent to
the cone $C_w$ and to the ball $B_{s_0}(y_0)$. By induction, setting
$s_{k}=qs_{k-1}=q^{k}s_0$, $t_{k}=qt_{k-1}=q^{k}t_0$, $y_k=(0,t_k)$, for $k\geq 2$, we have that the ball $B_{s_k}(y_k)$ is tangent to
the cone $C_w$ and to the ball $B_{s_{k-1}}(y_{k-1})$.
Let
\begin{equation*}
    m_k=\frac{\|u\|_{L^2(B_{s_k}(y_k))}+\varepsilon}{E+\varepsilon}.
\end{equation*}
By applying \eqref{3spheres_nonhomogeneous_2} to the balls of center $y_k$ and radii $s_k$, $3s_k$, $4s_k$, and noticing that
$B_{s_{k+1}}(y_{k+1})\subset B_{3s_{k}}(y_{k})$, we have
\begin{equation}
    \label{PS_glo2}
    m_{k+1}\leq Cm_k^\alpha, \quad \hbox{for every }k=0,1,\ldots,
\end{equation}
\begin{equation}
    \label{PS_glo3}
    m_N\leq C^{1+\alpha+\ldots+\alpha^{N-1}}m_0^{\alpha^N}, \quad \hbox{for every } N=1,2,\ldots,
\end{equation}
where $C>0$ and $\alpha$, $0<\alpha<1$, only depend on $K$, $L$ and $\kappa$.
By
\eqref{PS_Omega_h1} we have
\begin{equation}
    \label{PS_m0}
    m_0\leq C\left(\frac{\eta+\varepsilon}{E+\varepsilon}\right)^{\delta},
\end{equation}
so that
\begin{equation}
    \label{PS_glo4}
    m_N\leq C \left(\left(\frac{\eta+\varepsilon}{E+\varepsilon}\right)^{\delta}\right)^{\alpha^N},
\end{equation}
where $\alpha\in (0,1)$ only depends on $K$, $L$ and $\kappa$, whereas $C>0$ and $\delta$, $0<\delta<1$, only depend on $K$, $L$, $\kappa$, $M_0$, $\frac{\rho_0}{r_0}$ and $\frac{|\Omega|}{\rho_0^n}$.

In the new coordinate system centered at $w$, $x=(0,\widetilde{d}(x))$ and, by \eqref{g(x)lessd(x)} and by the choice of $t_0$, $r<\widetilde{d}(x)\leq (1+\sqrt{1+M_0^2})d\leq (1+\sqrt{1+M_0^2})h_1<t_0$.
Since $x\not\in\Omega_{h_1}$, we have that $x\not\in B_{s_0}(y_0)\cup B_{s_1}(y_1)\subset B_{3s_0}(y_0)\subset \Omega_{h_1}$.
Hence there exists $N\in \N$, $N\geq 2$, such that $x\in \overline{B_{s_N}(y_N)}$, $x\not\in B_{s_{N-1}}(y_{N-1})$.
It follows that
$\widetilde{d}(x)<t_{N-1}$ so that $q^{N-1}>\frac{r}{t_0}$.

By \eqref{3spheres_nonhomogeneous_2} and by \eqref{PS_glo4}, we have
\begin{equation}
    \label{PS_glo5}
    \frac{\|u\|_{L^2(B_{3s_N}(y_N))}}{E+\varepsilon}\leq Cm_N^\alpha\leq
    C \left(\frac{\eta+\varepsilon}{E+\varepsilon}\right)^{\gamma\alpha^{N-1}},
\end{equation}
where $C>0$ and $\gamma =\alpha\delta$ only depend on $K$, $L$, $\kappa$, $M_0$, $\frac{\rho_0}{r_0}$ and $\frac{|\Omega|}{\rho_0^n}$.
Since $\alpha^{N-1}=q^{(N-1)\frac{\log\alpha}{\log q}}>\left(\frac{r}{t_0}\right)^{\frac{\log\alpha}{\log q}}$, and noticing that
$\frac{\eta+\varepsilon}{E+\varepsilon}<1$, we have
\begin{equation}
    \label{PS_glo6}
    \|u\|_{L^2(B_{3s_N}(y_N))}\leq
    C (E+\varepsilon)\left(\frac{\eta+\varepsilon}{E+\varepsilon}\right)^{\gamma\left(\frac{r}{t_0}\right)^D},
\end{equation}
with $D=\frac{\log \alpha}{\log q}$ and
$C>0$ and $\gamma\in (0,1)$ only depending on $K$, $L$, $\kappa$, $M_0$, $\frac{\rho_0}{r_0}$ and $\frac{|\Omega|}{\rho_0^n}$.
Moreover $s_N=q^Ns_0>\frac{rqs_0}{t_0}$. Since $B_{2s_N}(x)\subset B_{3s_N}(y_N)$,
we have that
\begin{equation}
    \label{PS_glo7}
    \|u\|_{L^2(B_{2s_N}(x))}\leq
    C (E+\varepsilon)\left(\frac{\eta+\varepsilon}{E+\varepsilon}\right)^{\gamma\left(\frac{r}{t_0}\right)^D},
\end{equation}
with $C>0$ and $\gamma\in (0,1)$ only depending on $K$, $L$, $\kappa$, $M_0$, $\frac{\rho_0}{r_0}$ and $\frac{|\Omega|}{\rho_0^n}$.
Let us tessellate $\R^n$ with closed cubes of side $l=\frac{2rqs_0}{\sqrt n t_0}$ and let $Q_j$, $j=1,\ldots,J$,
be those cubes which intersect $\Omega_r\setminus \Omega_{h_1}$. Clearly, any such cube is contained in a ball of radius $\frac{rqs_0}{t_0}$ and center
$w_j$ such that $|w_j-x_j|\leq \frac{rqs_0}{t_0}$, for some $x_j\in\Omega_r\setminus \Omega_{h_1}$.
By the above arguments, there exists $N\in\N$, $N\geq 2$, such that \eqref{PS_glo7} holds for $x=x_j$, with
$s_N>\frac{rqs_0}{t_0}$.
Therefore $B_{s_N}(w_j)\subset B_{2s_N}(x_j)$ and, by \eqref{PS_glo7} and by the trivial estimate $J\leq \frac{n^{\frac{n}{2}}t_0^n|\Omega|}{(2qrs_0)^n}$,
we have that
\begin{equation}
    \label{PS_glo8}
    \int_{\Omega_r\setminus \Omega_{h_1}} u^2\leq\sum_{j=1}^J\int_{Q_j}u^2\leq\sum_{j=1}^J\int_{B{s_N}(w_j)}u^2\leq C\rho_0^n (E+\varepsilon)^2\left(\frac{r}{t_0}\right)^{-n}
    \left(\frac{\eta+\varepsilon}{E+\varepsilon}\right)^{2\gamma\left(\frac{r}{t_0}\right)^D},
\end{equation}
where
$C>0$ and $\gamma\in (0,1)$ only depend on $K$, $L$, $\kappa$, $M_0$, $\frac{\rho_0}{r_0}$ and $\frac{|\Omega|}{\rho_0^n}$.
By \eqref{PS_Omega_h1} and by \eqref{PS_glo8}, we arrive at
\begin{equation}
    \label{PS_glo9}
    \|u\|_{L^2(\Omega_r)}\leq C (E+\varepsilon)\left(\frac{r}{t_0}\right)^{-\frac{n}{2}}\left(\frac{\eta+\varepsilon}{E+\varepsilon}\right)^{\gamma\left(\frac{r}{t_0}\right)^D},
\end{equation}
with $C>0$ and $\gamma\in (0,1)$ only depending on $K$, $L$, $\kappa$, $M_0$, $\frac{\rho_0}{r_0}$ and $\frac{|\Omega|}{\rho_0^n}$.
Let us choose $p=\frac{2n}{n-2}$, for $n>2$, whereas for $n=2$, let us choose as $p$ any number satisfying $p>2$.
{From} H\"older
inequality, Sobolev embedding theorem and by Lemma~\ref{lem:AR}, we deduce
\begin{multline}
  \label{eq:sobolev}
\|u\|_{L^2(\Omega\setminus\Omega_{r})}\leq
\left(\frac{|\Omega\setminus\Omega_r|}{\rho_0^n}\right)
^{\frac{1}{2}-\frac{1}{p}}\|u\|_{L^p(\Omega)}\leq \\
\leq C\left(\frac{|\Omega\setminus\Omega_r|}{\rho_0^n}\right)^{\frac{1}{2}-\frac{1}{p}}
\rho_0\|\nabla u\|_{L^2(\Omega)}\leq CE\left(\frac{r}{\rho_0}\right)^{\frac{1}{2}-\frac{1}{p}},
\end{multline}
with $C$ only depending on $M_0$ and $\frac{|\Omega|}{\rho_0^n}$. Recall that $t_0<\rho_0$, therefore we may replace $\rho_0$ with
$t_0$ in \eqref{eq:sobolev}.
By \eqref{PS_glo9} and \eqref{eq:sobolev}, we have that for every $r$, $0<r< h_1$,
\begin{equation}
    \label{PS_glo10}
    \|u\|_{L^2(\Omega)}\leq C (E+\varepsilon)\left(\frac{r}{t_0}\right)^{-\frac{n}{2}}\left(\left(\frac{\eta+\varepsilon}{E+\varepsilon}\right)^{\gamma\left(\frac{r}{t_0}\right)^D}
    +\left(\frac{r}{t_0}\right)^{\frac{1}{2}-\frac{1}{p}}\right),
\end{equation}
with $C>0$ and $\gamma\in (0,1)$ only depending on $K$, $L$, $\kappa$, $M_0$, $\frac{\rho_0}{r_0}$ and $\frac{|\Omega|}{\rho_0^n}$.
Setting
\begin{equation*}
    \tau=\left(\frac{r}{t_0}\right)^D,
\quad   \tau_0=\left(\frac{h_1}{t_0}\right)^D,
\end{equation*}
\begin{equation*}
    \vartheta=\frac{1}{D}\left(\frac{1}{2}-\frac{1}{p}\right),
\quad    \sigma=\frac{n}{2D},
\quad    \zeta=\left(\frac{\eta+\varepsilon}{E+\varepsilon}\right)^\gamma,
\end{equation*}
we obtain
\begin{equation}
    \label{PS_glo11}
    \|u\|_{L^2(\Omega)}\leq C(E+\varepsilon)(\tau^\vartheta+\tau^{-\sigma}\zeta^\tau), \quad \hbox{ for every }\tau, \ 0<\tau\leq \tau_0,
\end{equation}
with $C>0$ and $\gamma\in (0,1)$ only depending on $K$, $L$, $\kappa$, $M_0$, $\frac{\rho_0}{r_0}$ and $\frac{|\Omega|}{\rho_0^n}$.
Denoting
\begin{equation*}
    \Phi(\tau)=\tau^\vartheta+\tau^{-\sigma}\zeta^\tau,
\end{equation*}
let us estimate from above $\underset{0<\tau\leq \tau_0}{\inf}\Phi(\tau)$. To this aim, it is convenient to introduce
a new parameter $l$, related to $\tau$ by
\begin{equation*}
    \tau=\left(\frac{1}{\log\frac{1}{\zeta}}\right)^l.
\end{equation*}
We compute
\begin{equation*}
    \Phi(\tau)=\left(\frac{1}{\log\frac{1}{\zeta}}\right)^{l\vartheta}+\left(\frac{1}{\log\frac{1}{\zeta}}\right)^{-l\sigma}
    \exp\left\{-\left(\log\frac{1}{\zeta}\right)\frac{1}{\left(\log\frac{1}{\zeta}\right)^l}\right\}.
\end{equation*}
Let us temporarily assume $l<1$, and let us use the inequality $e^{-s}<1/s$ with
\begin{equation*}
    s=\frac{1}{\left(\log\frac{1}{\zeta}\right)^{l-1}}.
\end{equation*}
We obtain
\begin{equation*}
    \Phi(\tau)\leq\left(\frac{1}{\log\frac{1}{\zeta}}\right)^{l\vartheta}+\left(\frac{1}{\log\frac{1}{\zeta}}\right)^{1-l(1+\sigma)}.
\end{equation*}
Let us choose $l=\frac{1}{1+\vartheta+\sigma}$, so that $l\vartheta=1-l(1+\sigma)=\frac{\vartheta}{1+\vartheta+\sigma}$, and $0<l<1$.
Denoting
\begin{equation*}
    \mu=\min\{l\vartheta,1-l(1+\sigma)\}=\frac{\vartheta}{1+\vartheta+\sigma},
\end{equation*}
we have
\begin{equation*}
    \Phi(\tau)\leq 2\left(\frac{1}{\log\frac{1}{\zeta}}\right)^{\mu}.
\end{equation*}
Now, if $\left(\frac{1}{\log\frac{1}{\zeta}}\right)^{l}\leq \tau_0$, then
$\underset{0<\tau\leq \tau_0}{\min}\Phi(\tau)\leq 2\left(\frac{1}{\log\frac{1}{\zeta}}\right)^{\mu}$.
Otherwise, if $\left(\frac{1}{\log\frac{1}{\zeta}}\right)^{l}> \tau_0$, then
$\underset{0<\tau\leq \tau_0}{\min}\Phi(\tau)\leq \Phi(\tau_0)\leq C$, with $C>0$ only depending on
$K$, $L$, $\kappa$, $M_0$ and $\frac{r_0}{\rho_0}$.

Furthermore, $\left(\frac{1}{\log\frac{1}{\zeta}}\right)^{\mu}> \tau_0^{\frac{\mu}{l}}$, and hence
$\underset{0<\tau\leq \tau_0}{\min}\Phi(\tau)
\leq \frac{C}{\tau_0^{\frac{\mu}{l}}}\left(\frac{1}{\log\frac{1}{\zeta}}\right)^{\mu}$.
By \eqref{PS_glo11} and recalling the definition of $\zeta$,
we arrive at
\begin{equation}
    \label{PS_glo12}
    \|u\|_{L^2(\Omega)}\leq C (E +\varepsilon)\left(\frac{1}{-\gamma\log\frac{\eta+\varepsilon}{E+\varepsilon}}\right)^{\mu},
\end{equation}
where $C>0$ and $\mu$, $0<\mu<1$, only depend on $K$, $L$, $\kappa$, $M_0$, $\frac{r_0}{\rho_0}$ and
$\frac{|\Omega|}{\rho_0^n}$. Therefore, with a possibly new choice of $C$, we obtain
\begin{equation}
    \label{PS_glo13}
    \|u\|_{L^2(\Omega)}
    \leq C(E+\varepsilon) \left(\frac{1}{\log\frac{E+\varepsilon}{\eta+\varepsilon}}\right)^{\mu},
\end{equation}
where $C>0$ and $\mu$, $0<\mu<1$, only depend on $K$, $L$, $\kappa$, $M_0$, $\frac{r_0}{\rho_0}$ and
$\frac{|\Omega|}{\rho_0^n}$.
\end{proof}

\section{Proofs of the stability results for the Cauchy problem} \label{sec:
stability}

In order to reduce the problem of continuation from Cauchy data to a problem of continuation from an open set,
we shall introduce an augmented domain as follows.

We recall that $\Sigma$ is an open Lipschitz portion of $\partial\Omega$ with constants $\rho_0$, $M_0$ according to
Definition~\ref{def:Lipschitz_Sigma} and it has size at least $\rho_1>0$ according to Definition~\ref{def:size_Sigma}.

We denote $\eta:[0,+\infty)\to [0,1]$ as follows

\begin{equation}
  \label{eta}
  \eta(t)=
  \left\{ \begin{array}{ll}
  1, &
  0\leq t\leq \frac{1}{4},\\
  & \\
  4\left(\frac{1}{2}-t\right), &
  \frac{1}{4}\leq t\leq \frac{1}{2},\\
  & \\
  0, &
  t\geq \frac{1}{2}.\\
  \end{array}\right.
\end{equation}
Let $P\in\Sigma$ the point described in Definition~\ref{def:size_Sigma} and let $Z:B'_{\frac{\rho_0}{M_0}}\to \R$
be the Lipschitz function appearing in \eqref{Sigma_lip} for the local representation of $\Omega$ near $P$. According to \eqref{Sigma_lip} we have
\begin{equation*}
  \Omega \cap \Gamma_{\frac{\rho_{1}}{M_0},\rho_1}(P)=\{x=(x',x_n) \in \Gamma_{\frac{\rho_{1}}{M_0},\rho_1}\quad | \quad
x_{n}>Z(x')
  \}.
\end{equation*}
Let us denote
\begin{equation}
  \label{psi-}
  Z^-(x')=Z(x')-\frac{\rho_1}{2}\eta\left(\frac{M_0|x'|}{\rho_1}\right),\quad\hbox{for every }x'\in B'_{\frac{\rho_0}{M_0}}.
\end{equation}
Observe that
\begin{equation}
  \label{bound_psi-}
  |Z^-(x')|\leq |Z(x')|+\frac{\rho_1}{2},\quad\hbox{for every }x'\in B'_{\frac{\rho_0}{M_0}},
\end{equation}
\begin{equation}
  \label{bound_grad_psi-}
  |\nabla_{x'}Z^-(x')|\leq |\nabla_{x'}Z(x')|+2M_0,\quad\hbox{for every }x'\in B'_{\frac{\rho_0}{M_0}},
\end{equation}
and therefore, since $M_0\geq 1$,
\begin{equation}
  \label{norm_psi-}
  \|Z^-\|_{L^\infty\left(B'_{\frac{\rho_0}{M_0}}\right)}+
  \rho_0\|\nabla Z^-\|_{L^\infty\left(B'_{\frac{\rho_0}{M_0}}\right)}\leq
  \rho_0M_0+\left(\frac{\rho_1}{2}+2\rho_0\right)M_0\leq \frac{7}{2}\rho_0M_0.
\end{equation}
Next, we denote
\begin{equation}
  \label{open_A}
  \mathcal{A}=\{x=(x',x_n) \in \Gamma_{\frac{\rho_{0}}{M_0},\rho_0}\quad | \quad
  Z^-(x')<x_{n}<Z(x')
  \},
\end{equation}
\begin{equation}
  \label{Sigma_0}
  \Sigma_0=\{x=(x',x_n) \in \Gamma_{\frac{\rho_{0}}{M_0},\rho_0}\quad | \quad
  |x'|<\frac{\rho_{1}}{2M_0}, x_{n}=Z(x')
  \},
\end{equation}
\begin{equation}
  \label{Omega_tilde}
  \widetilde{\Omega}=\Omega\cup \Sigma_0\cup \mathcal{A}.
\end{equation}
It is a straightforward matter to verify that
\begin{description}
\item{i)}
if $\Omega$ has Lipschitz boundary with constants $\rho_0$, $M_0$, then $\widetilde{\Omega}$
has Lipschitz boundary with constants $\frac{\rho_0}{2}$, $\frac{7}{2}M_0$.
\item{ii)}
using the coordinates employed in the construction we have
\begin{equation}
  \label{Omega_tilde_contains}
  \widetilde{\Omega}\supset \Gamma_{\frac{\rho_{1}}{4M_0},\frac{\rho_1}{4}}.
\end{equation}
\item{iii)}
if we denote, for any $r>0$,
\begin{equation}
  \label{cono_sotto}
  C^-_r= \{x=(x',x_n)\in \R^n \ |\ -r<x_n<-M_0|x'|\},
\end{equation}
\begin{equation}
\label{cono_sopra}
  C^+_r= \{x=(x',x_n)\in \R^n \ |\ M_0|x'|<x_n< r\},
\end{equation}
we also have
\begin{equation}
  \label{A_contains}
  \mathcal{A}\supset C^-_{\frac{\rho_1}{4}}.
\end{equation}
\item{iv)}
if we denote
\begin{equation}
  \label{r1}
  r_0=\frac{\rho_1\left(\sqrt{1+M_0^2}-1\right)}{8M_0^2}=\frac{\rho_1}{8\left(\sqrt{1+M_0^2}+1\right)},
\end{equation}
\begin{equation}
  \label{P-}
  x_0=\left(0,r_0-\frac{\rho_1}{8}\right),
\end{equation}
we obtain that
\begin{equation}
  \label{ball_in cylinder}
  B_{r_0}(x_0)\subset C^-_{\frac{\rho_1}{8}}
\end{equation}
\end{description}

In the next lemma we continue to use the coordinate system centered in $P$ and the notation described in Definition~\ref{def:Lipschitz_boundary} and used above.

We denote
\begin{equation}
\Gamma^-_{\frac{\rho_1}{M_0},\rho_1}=\{(x',x_n)\in \Gamma_{\frac{\rho_1}{M_0},\rho_1}\ |\ x_n<Z(x')\}
\end{equation}
that is
\begin{equation}
\Gamma^-_{\frac{\rho_1}{M_0},\rho_1}=\Gamma_{\frac{\rho_1}{M_0},\rho_1}\setminus \overline{\Omega}.
\end{equation}

\begin{lemma}\label{lemma:extension}
Let $g\in H^{\frac{1}{2}}(\Sigma\cap \Gamma_{\frac{\rho_1}{M_0},\rho_1})$.
Then there exists $v\in H^1(\Gamma^-_{\frac{\rho_1}{M_0},\rho_1})$ such that
\begin{equation}\label{extensionv}
v|_{\Sigma\cap \Gamma_{\frac{\rho_1}{M_0},\rho_1}}=g\quad\text{in the sense of traces},
\end{equation}
and
\begin{equation}\label{extensionvest}
\|v\|_{H^1(\Gamma^-_{\frac{\rho_1}{M_0},\rho_1})}
\leq C\|g\|_{H^{\frac{1}{2}}(\Sigma\cap \Gamma_{\frac{\rho_1}{M_0},\rho_1})},
\end{equation}
where $C>0$ only depends on $M_0$ and $\frac{\rho_0}{\rho_1}$.
\end{lemma}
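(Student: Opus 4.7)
The plan is to reduce the problem to the classical bounded right inverse of the trace operator on a flat slab, by first flattening the Lipschitz graph representing $\Sigma$ near $P$. Introduce the bi-Lipschitz map
\[
\Phi:\Gamma_{\frac{\rho_1}{M_0},\rho_1}\to\R^n,\qquad \Phi(x',x_n)=(x',\,x_n-Z(x')).
\]
Its Jacobian determinant equals $1$ and both $\Phi$ and $\Phi^{-1}$ are Lipschitz with constants controlled only by $M_0$. The map sends $\Sigma\cap\Gamma_{\frac{\rho_1}{M_0},\rho_1}$ onto the flat disk $B'_{\frac{\rho_1}{M_0}}\times\{0\}$, and sends $\Gamma^-_{\frac{\rho_1}{M_0},\rho_1}$ into a slab $D=B'_{\frac{\rho_1}{M_0}}\times(-h,0)$, with $h$ bounded by a multiple of $\rho_1+M_0\rho_0$ via \eqref{bordo_lip3}.

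First I would transport the boundary datum by setting $\tilde g(y')=g(y',Z(y'))$; bi-Lipschitz change of variables on the graph of $Z$ gives
\[
\|\tilde g\|_{H^{1/2}(B'_{\rho_1/M_0})}\leq C_1\,\|g\|_{H^{1/2}(\Sigma\cap\Gamma_{\rho_1/M_0,\rho_1})},
\]
with $C_1$ depending on $M_0$ and $\rho_0/\rho_1$. Next I would apply the classical trace lifting on the flat slab $D$: writing $\tilde g$ as the restriction to $\{y_n=0\}$ of a partial-Fourier multiplier extension to the half-space $\{y_n<0\}$ (of Poisson-kernel type), followed by a smooth cutoff in the vertical variable centred at $y_n=0$, one produces $\tilde v\in H^1(D)$ with trace $\tilde g$ on the top face and
\[
\|\tilde v\|_{H^1(D)}\leq C_2\,\|\tilde g\|_{H^{1/2}(B'_{\rho_1/M_0})},
\]
with $C_2$ depending only on $M_0$ and $\rho_0/\rho_1$. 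Finally, setting $v=\tilde v\circ\Phi$ and using a last bi-Lipschitz change-of-variable estimate on the slab yields $v\in H^1(\Gamma^-_{\rho_1/M_0,\rho_1})$ with trace $g$ on $\Sigma\cap\Gamma_{\rho_1/M_0,\rho_1}$ and the inequality \eqref{extensionvest}.

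The main technical obstacle is the bookkeeping of the norm rescaling dictated by Remark~\ref{rem:normal_norm}: all the $H^{1/2}$ and $H^1$ norms in this paper are dimensionally normalized by $\rho_0$, while the natural scale of the cylinder $\Gamma_{\rho_1/M_0,\rho_1}$ is $\rho_1$, and the vertical extent of the flattened slab $D$ is controlled in terms of $\rho_1+M_0\rho_0$. Tracking this mismatch through the flattening, the Fourier-multiplier extension on the flat slab (including the vertical cutoff, whose gradient scales like $1/\rho_1$), and the pull-back is precisely what produces the explicit $\rho_0/\rho_1$-dependence of the final constant $C$. All other ingredients — the bi-Lipschitz flattening diffeomorphism and the classical trace-lifting operator on a slab — are entirely standard and contribute only the $M_0$-dependence.
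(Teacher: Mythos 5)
Your proposal is correct and follows essentially the same route as the paper: reduce to the flat model case by a bi-Lipschitz flattening of the graph of $Z$, apply a standard bounded right inverse of the trace operator on a flat cylinder (the paper simply cites Kufner--John--Fu\v{c}\'{\i}k, Lemma~6.9.1, whereas you spell out a Poisson-kernel extension plus vertical cutoff), and track the $\rho_0/\rho_1$ and $M_0$ dependencies through the scaling and the change of variables. The only cosmetic difference is the order of steps (the paper starts from the normalized reference case and then scales and deforms; you flatten first and then extend), but the content of the argument is identical.
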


\begin{proof} This is a well-known fact. It suffices to prove it first in the reference situation when $Z\equiv 0$, $M_0=1$ and $\rho_1=1$, see for instance
\cite[Lemma~6.9.1]{KJF}.

Next, by a scaling, and by our convention on norms as described in Remark~\ref{rem:normal_norm}, we obtain \eqref{extensionvest} when $Z\equiv 0$, $M_0=1$
and $\rho_1>0$ is arbitrary. At this stage the constant $C$ in \eqref{extensionvest}
shall depend on $\frac{\rho_0}{\rho_1}$ only.

Finally, by a bilipschitz change of coordinates we may pass to the general (non-flat) case, at the price of admitting that $C$ also depends on $M_0$.\end{proof}

\bigskip

Let us now define
\begin{equation}\label{utilde}
\widetilde{u}=\left\{\begin{array}{ll}u, &\text{in }\Omega,\\
v, &\text{in }\Gamma^-_{\frac{\rho_1}{M_0},\rho_1},
\end{array}\right.
\end{equation}
where $u\in H^1(\Omega)$ is a weak solution to the Cauchy Problem~\ref{weakCauchy} and
$v$ is the function introduced in the previous lemma.

We denote
\begin{equation}
\Omega_1=\Omega\cup (\Sigma\cap \Gamma_{\frac{\rho_1}{M_0},\rho_1})
\cup \Gamma^-_{\frac{\rho_1}{M_0},\rho_1}.
\end{equation}

We obtain immediately that
\begin{equation}
\widetilde{u}\in H^1(\Omega_1)
\end{equation}
and also the following extension theorem.

\begin{theorem}[Extension]\label{theo:extension}
There exist
$\widetilde{f}\in L^2(\Omega_1)$, $\widetilde{F}\in L^2(\Omega_1;\R^n)$ such that
\begin{equation}\label{tildefF}
\|\widetilde{f}\|_{L^2(\Omega_1)}+\frac{1}{\rho_0}\|\widetilde{F}\|_{L^2(\Omega_1;\R^n)}
\leq C\frac{\varepsilon+\eta}{\rho_0^2}
\end{equation}
and $\widetilde{u}$ satisfies in the weak sense
\begin{equation}\label{tildeuequ}
\mathrm{div}(A\nabla\widetilde{u})+c\widetilde{u}=\widetilde{f}+\mathrm{div}\widetilde{F},\quad\text{in }\Omega_1.
\end{equation}
Here $C>0$ only depends on $M_0$, $K$, $\kappa$ and $\frac{\rho_0}{\rho_1}$.
\end{theorem}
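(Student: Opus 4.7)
The plan is to test the sought identity
\begin{equation*}
\int_{\Omega_1}\left(A\nabla\widetilde{u}\cdot\nabla\varphi - c\widetilde{u}\varphi\right)=-\int_{\Omega_1}\left(\widetilde{f}\varphi - \widetilde{F}\cdot\nabla\varphi\right)
\end{equation*}
against arbitrary $\varphi\in H^1_0(\Omega_1)$, split the left-hand side as an integral over $\Omega$ plus one over $\Gamma^-_{\rho_1/M_0,\rho_1}$, and construct $(\widetilde{f},\widetilde{F})$ piecewise so that each piece matches. On $\Omega$ the restriction $\varphi|_\Omega$ has compact support inside $\Omega\cup(\Sigma\cap\Gamma_{\rho_1/M_0,\rho_1})$, hence lies in $H^1_{co}(\Omega\cup\Sigma)$, so Problem~\ref{weakCauchy} applies and gives
\begin{equation*}
\int_\Omega(A\nabla u\cdot\nabla\varphi-cu\varphi)=\int_\Sigma\psi\varphi-\int_\Omega(f\varphi-F\cdot\nabla\varphi).
\end{equation*}
This forces $\widetilde{f}:=f$, $\widetilde{F}:=F$ on $\Omega$, with the bound $\varepsilon/\rho_0^2$ immediate from \eqref{bound_fF}. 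The real work is on $\Gamma^-$: one must encode there both the arbitrary extension $v$ (which solves no special equation) and the Neumann datum $\psi$ (which sits as a boundary term on $\Sigma$).

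\textbf{Construction on $\Gamma^-$.} The contribution from $v$ is easy: since $v\in H^1(\Gamma^-)$, the map $\varphi\mapsto\int_{\Gamma^-}(A\nabla v\cdot\nabla\varphi-cv\varphi)$ is already written in the $\widetilde{F}\cdot\nabla\varphi-\widetilde{f}\varphi$ form by taking $\widetilde{F}:=A\nabla v$ and $\widetilde{f}:=cv$, and Lemma~\ref{lemma:extension} together with \eqref{c_bound} yields both terms bounded by $C\eta/\rho_0^2$ after normalizing as in Remark~\ref{rem:normal_norm}. To absorb $\int_\Sigma\psi\varphi$, I would represent $\psi$ by an auxiliary bulk field through Lax--Milgram applied on the closed subspace
\begin{equation*}
V=\bigl\{\phi\in H^1(\Gamma^-_{\rho_1/M_0,\rho_1}):\ \phi|_{\partial\Gamma^-\setminus\Sigma}=0\bigr\},
\end{equation*}
on which Poincar\'e holds and which contains $\varphi|_{\Gamma^-}$ for every $\varphi\in H^1_0(\Omega_1)$ (the portions of $\partial\Gamma^-$ not lying on $\Sigma$ are parts of $\partial\Omega_1$). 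Since the trace on $\Sigma$ of any $\phi\in V$ is supported in $\overline{\Sigma\cap\Gamma_{\rho_1/M_0,\rho_1}}$, it belongs to $H^{1/2}_{co}(\Sigma)$ and so pairs validly with $\psi\in H^{-1/2}(\Sigma)$, giving a continuous linear functional $\phi\mapsto\langle\psi,\phi|_\Sigma\rangle$ on $V$ of norm at most $C\|\psi\|_{H^{-1/2}(\Sigma)}$. The coercive form $(w,\phi)\mapsto\int_{\Gamma^-}A\nabla w\cdot\nabla\phi$ then yields a unique $w\in V$ with
\begin{equation*}
\int_{\Gamma^-}A\nabla w\cdot\nabla\phi=\langle\psi,\phi|_\Sigma\rangle\quad\text{for every }\phi\in V,\qquad \|w\|_{H^1(\Gamma^-)}\leq C\rho_0\|\psi\|_{H^{-1/2}(\Sigma)}\leq C\eta.
\end{equation*}
Finally I update $\widetilde{F}|_{\Gamma^-}:=A\nabla(v+w)$, $\widetilde{f}|_{\Gamma^-}:=cv$.

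\textbf{Verification and main obstacle.} Summing the $\Omega$ and $\Gamma^-$ contributions, the boundary term $\int_\Sigma\psi\varphi$ coming from Problem~\ref{weakCauchy} is exactly cancelled by the Lax--Milgram identity $\int_{\Gamma^-}A\nabla w\cdot\nabla\varphi=\langle\psi,\varphi|_\Sigma\rangle$, producing the sought weak equation \eqref{tildeuequ} in $\Omega_1$. The estimate \eqref{tildefF} then assembles from three pieces: the Cauchy source bound \eqref{bound_fF} on $\Omega$, the extension estimate $\|v\|_{H^1(\Gamma^-)}\leq C\|g\|_{H^{1/2}(\Sigma)}\leq C\eta$ from Lemma~\ref{lemma:extension}, and the Lax--Milgram estimate $\|w\|_{H^1(\Gamma^-)}\leq C\rho_0\|\psi\|_{H^{-1/2}(\Sigma)}\leq C\eta$, each converted to an $L^2$ bound on $A\nabla(\cdot)$ or $c(\cdot)$ by \eqref{ellipticity}, \eqref{c_bound}, and the dimensional conventions of Remark~\ref{rem:normal_norm}. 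I expect the delicate point to be the setup for $w$: concretely, verifying that the trace-on-$\Sigma$ map from $V$ really lands in $H^{1/2}_{co}(\Sigma)$ (so that the pairing with $\psi$ is admissible in the paper's sense), and tracking the powers of $\rho_0$ through the Poincar\'e constant on $V$ so that the final bound comes out as $(\varepsilon+\eta)/\rho_0^2$ rather than some scaled variant.
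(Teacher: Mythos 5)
Your construction is sound in outline and arrives at a valid extension, but it takes a genuinely different route from the paper's, and the point you flag at the end as ``delicate'' is exactly the issue the paper's argument is designed to avoid. The paper does not localize the representation of $\int_\Sigma\psi\varphi$ to $\Gamma^-$. Instead, it regards $\Psi(\varphi)=\int_\Sigma\psi\varphi$ as a bounded linear functional on all of $H^1_0(\Omega_1)$, using only the trace estimate $H^1_0(\Omega_1)\hookrightarrow H^{1/2}(\Sigma_1)$ for the interior Lipschitz surface $\Sigma_1=\Sigma\cap\Gamma_{\frac{\rho_1}{M_0},\rho_1}$, and then applies Riesz representation in the Hilbert space $H^1_0(\Omega_1)$ to produce $f_1\in L^2(\Omega_1)$, $F_1\in L^2(\Omega_1;\R^n)$ with $\Psi(\varphi)=\int_{\Omega_1}\left(f_1\varphi-F_1\cdot\nabla\varphi\right)$. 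It then sets $\widetilde{f}=f-f_1$, $\widetilde{F}=F-F_1$ on $\Omega$ and $\widetilde{f}=cv-f_1$, $\widetilde{F}=A\nabla v-F_1$ on $\Gamma^-$, so the auxiliary source is spread over all of $\Omega_1$; the identity \eqref{tildeuequ} then follows by direct substitution, and the bound \eqref{tildefF} comes from the trace estimate and Lemma \ref{lemma:extension}.

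Your Lax--Milgram-on-$V$ variant has the aesthetic advantage of leaving $\widetilde{f},\widetilde{F}$ equal to $f,F$ on $\Omega$ and concentrating all new terms on $\Gamma^-$, but the price is precisely the trace-theoretic obstacle you name: to define the functional on $V$ and get the Lax--Milgram bound you need $\phi\mapsto\phi|_\Sigma$ (extended by zero) to map $V$ boundedly into $H^{1/2}_{co}(\Sigma)$ with $\|\phi|_\Sigma\|_{H^{1/2}(\Sigma)}\leq C\|\phi\|_{H^1(\Gamma^-)}$. Since extension by zero is not bounded on $H^{1/2}$ in general, this requires exploiting the vanishing trace of $\phi$ on $\partial\Gamma^-\setminus\Sigma$ -- an $H^{1/2}_{00}$-type statement that calls for a Hardy inequality near $\partial\Sigma_1$, or an explicit bounded extension of $\phi$ across $\Sigma_1$ into $\Omega$ followed by a cutoff, so as to realize $\phi|_\Sigma$ as the trace of some element of $H^1_{co}(\Omega\cup\Sigma)$. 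This can be done for the Lipschitz domain $\Gamma^-$, but it is not automatic and you have not supplied it, whereas the paper's choice of $H^1_0(\Omega_1)$ as the Riesz-representation space eliminates the issue entirely (there the trace is taken on an interior surface of $\Omega_1$ and no zero-extension is needed). Until that trace lemma is filled in, your write-up has a genuine gap exactly where you anticipated it; once it is supplied, your argument is a valid alternative that yields a more localized $(\widetilde{f},\widetilde{F})$.
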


\begin{proof} Let $\varphi$ be an arbitrary test function in $H^1_0(\Omega_1)$,
with support compactly contained in $\Omega_1$.
Evidently $\varphi|_{\Omega}\in H^1_{co}(\Omega\cup\Sigma)$.

Denoting for simplicity $\Gamma^-=\Gamma^-_{\frac{\rho_1}{M_0},\rho_1}$,
we compute
\begin{multline}\label{equation_c}
-\int_{\Omega_1}(A\nabla \widetilde{u}\cdot\nabla\varphi-c\widetilde{u}\varphi)=
-\int_{\Omega}(A\nabla u\cdot\nabla\varphi-cu\varphi)
-\int_{\Gamma^-}(A\nabla v\cdot\nabla\varphi-cv\varphi)=\\
=-\int_{\Sigma}\psi\varphi+\int_{\Omega}(f\varphi-F\cdot\nabla\varphi)
-\int_{\Gamma^-}(A\nabla v\cdot\nabla\varphi-cv\varphi).
\end{multline}
Let us also denote $\Sigma_1=\Sigma\cap\Gamma_{\frac{\rho_1}{M_0},\rho_1}$ and let us set
\begin{equation}\label{Psidef}
\Psi(\varphi)=\int_{\Sigma}\psi\varphi=\rho_0^{n-1}\frac{1}{\rho_0^{n-1}}\int_{\Sigma}\psi\varphi.
\end{equation}
We have
\begin{equation}\label{Psiest}
|\Psi(\varphi)|\leq \rho_0^{n-1}\|\psi\|_{H^{-\frac{1}{2}}(\Sigma)}
\|\varphi|_{\Sigma_1}\|_{H^{\frac{1}{2}}(\Sigma_1)}\leq
C\rho_0^{n-2}\eta\|\varphi\|_{H^1_0(\Omega_1)}.
\end{equation}
Here $C>0$ is the constant
for the trace imbedding $H^1_0(\Omega_1) \hookrightarrow  H^{\frac{1}{2}}(\Sigma_1)$
which only depends on the Lipschitz character of $\Sigma_1$. Hence $C$ only depends on $M_0$ and $\frac{\rho_0}{\rho_1}$.

Therefore $\Psi\in H^{-1}(\Omega_1)$ and its norm is bounded as follows
\begin{equation}\label{Psinormest}
\|\Psi\|_{H^{-1}(\Omega_1)}\leq C\rho_0^{n-2}\eta,
\end{equation}
where $C>0$ is the same constant as above.

By the well-known Riesz representation theorem in Hilbert spaces, we can find
$f_1\in L^2(\Omega_1)$, $F_1\in L^2(\Omega_1;\R^n)$ such that
\begin{equation}\label{f_1est}
\rho_0\|f_1\|_{L^2(\Omega_1)}+\|F_1\|_{L^2(\Omega_1;\R^n)}\leq
\frac{\sqrt{2}}{\rho_0^{n-1}}\|\Psi\|_{H^{-1}(\Omega_1)}\leq
C\frac{\eta}{\rho_0},
\end{equation}
and
\begin{equation}\label{Psicaratt}
\Psi(\varphi)=\int_{\Omega_1}f_1\varphi-F_1\cdot\nabla \varphi,\quad\text{for every }\varphi\in H^1_0(\Omega_1).
\end{equation}

Note that the powers of $\rho_0$ appearing above are calculated according to
the fact that the appropriate scalar product for $H^1(\Omega_1)$,
as derived by our conventions on norms, is given by
$$\langle \varphi,\varphi'\rangle_{H^1(\Omega_1)}=\frac{1}{\rho_0^n}\int_{\Omega_1}\varphi\varphi'+\rho_0^2\nabla\varphi\cdot\nabla\varphi',$$
and, analogously, the $L^2(\Sigma)$-scalar product is to be meant as follows
$$\langle \varphi,\varphi'\rangle_{L^2(\Sigma)}=\frac{1}{\rho_0^{n-1}}\int_{\Sigma}\varphi\varphi'.$$

We define
\begin{equation}\label{tildefdef}
\widetilde{f}=\left\{\begin{array}{ll}
f-f_1,&\text{in }\Omega,\\
cv-f_1,&\text{in }\Gamma^-,
\end{array}\right.
\end{equation}
\begin{equation}\label{tildeFdef}
\widetilde{F}=\left\{\begin{array}{ll}
F-F_1,&\text{in }\Omega,\\
A\nabla v-F_1,&\text{in }\Gamma^-.
\end{array}\right.
\end{equation}
We obtain
\begin{equation}\label{tildefest}
\|\widetilde{f}\|_{L^2(\Omega_1)}\leq \|f\|_{L^2(\Omega)}+
\|f_1\|_{L^2(\Omega_1)}+\frac{\kappa}{\rho_0^2}\|v\|_{L^2(\Gamma^-)},
\end{equation}
\begin{equation}\label{tildeFest}
\|\widetilde{F}\|_{L^2(\Omega_1;\R^n)}\leq \|F\|_{L^2(\Omega;\R^n)}+
\|F_1\|_{L^2(\Omega_1;\R^n)}+K\|\nabla v\|_{L^2(\Gamma^-;\R^n)}.
\end{equation}
Hence
\begin{multline}\label{tildefFestfinal}
\|\widetilde{f}\|_{L^2(\Omega_1)}+\frac{1}{\rho_0}
\|\widetilde{F}\|_{L^2(\Omega_1;\R^n)}\leq \\ \leq
\left(
\|f\|_{L^2(\Omega)}+\frac{1}{\rho_0}\|F\|_{L^2(\Omega;\R^n)}\right)+
\left(\|f_1\|_{L^2(\Omega_1)}+\frac{1}{\rho_0}\|F_1\|_{L^2(\Omega_1;\R^n)}\right)+
\frac{C}{\rho_0^2}\|v\|_{H^1(\Gamma^-)},
\end{multline}
where $C>0$ only depends on $\kappa$ and $K$.
Thus, recalling Lemma~\ref{lemma:extension} and the assumptions on
$f$, $F$, $g$ and $\psi$, we deduce \eqref{tildefF} and from \eqref{equation_c}
we obtain \eqref{tildeuequ}.\end{proof}

\bigskip

\begin{remark}\label{rem:extension}
It is evident how, by the above theorem, we can translate a Cauchy problem
into a problem of propagation of smallness. The philosophy is as follows. Whenever
we are given $\rho_1>0$ and $P\in\Sigma$ such
that $\Gamma_{\frac{\rho_1}{M_0},\rho_1}(P)\cap \Sigma$ is a Lipschitz graph,
and once we have shown that $\widetilde{u}|_{\Gamma^-}=v$ is
"small" in $\Gamma^-_{\frac{\rho_1}{M_0},\rho_1}$, then we can propagate such
a "smallness" in a H\"older fashion to any compact subset of
$\Gamma_{\frac{\rho_1}{M_0},\rho_1}$. That is,
we obtain that $u=\widetilde{u}|_{\Omega}$ is also "small" in a H\"older fashion in a region
near $P$ inside $\Omega$. This rough argument is made precise in
Theorem~\ref{theo:Cauchyinterior}, which we are going to prove here below.

For technical reasons, we found it convenient to make use of the augmented domain
$\widetilde{\Omega}\subset\Omega_1$ rather than $\Omega_1$ itself. The advantage is that when $\Omega$ is assumed to be globally Lipschitz, the global Lipschitz regularity
of $\widetilde{\Omega}$ is proven in a more transparent way. This fact will be somewhat
helpful later on for the proof of Theorem~\ref{theo:Cauchy_global}.
\end{remark}

\begin{proof}[Proof of Theorem~\ref{theo:Cauchyinterior}]
Let us set $x_0$ and $r_0$ as in \eqref{P-} and in \eqref{r1}, respectively.
Let
$\overline{h}=\min\{2C_0\rho_0,\frac{r_0}{2}\}$, $C_0$ as in the thesis of
Theorem~\ref{cor:3spheres_zero_order}. Notice that, by \eqref{r1}, $\overline{h}<\frac{\rho_1}{8M_0}$.
Denote
\begin{equation}
\label{Gtilde}
\widetilde{G}=G\cup\Gamma_{\frac{\rho_{1}}{8M_0},\frac{\rho_{1}}{8}}.
\end{equation}
Recalling \eqref{Omega_tilde_contains}, it is easily verified that $\widetilde{G}\subset\widetilde{\Omega}$, it is connected and
also
\begin{equation}
  \label{dist_Gtilde_bordo}
  \mathrm{dist}(\widetilde{G},\partial\widetilde{\Omega})\geq\min\left\{h,\frac{\rho_1}{8M_0}\right\}=h.
\end{equation}
Moreover, by \eqref{ball_in cylinder},
\begin{equation}
  \label{dist_ball_inside}
  B_{\frac{r_0}{2}}(x_0)\subset \mathcal{A}\cap\Gamma_{\frac{\rho_{1}}{8M_0},\frac{\rho_{1}}{8}}\subset \widetilde{G}.
\end{equation}
Hence we can apply Theorem~\ref{theo:PSinterior} with $u$, $f$, $F$, $\Omega$, $G$ replaced with $\widetilde{u}$, $\widetilde{f}$,
$\widetilde{F}$, $\widetilde{\Omega}$, $\widetilde{G}$ respectively. By Lemma~\ref{lemma:extension} and by \eqref{utilde} we recall that
\begin{equation}
  \label{bound_norm_ball}
  \|\widetilde{u}\|_{L^2(B_{r_0}(x_0))}\leq C\eta,
\end{equation}
\begin{equation}
  \label{bound_norm_domain}
  \|\widetilde{u}\|_{L^2(\widetilde{\Omega})}\leq C(E_0+\eta),
\end{equation}
where $C>0$ only depends on $M_0$ and $\frac{\rho_0}{\rho_1}$.
Moreover, noticing that $\Omega\supset C^+_{\rho_1}$, it is immediate to estimate $|\widetilde{\Omega}|\leq C|\Omega|$, with $C>0$ an absolute constant. Thus the thesis follows.\end{proof}

\bigskip

\begin{remark}
  \label{rem:weaken_H1}
  Observe that in Theorem~\ref{theo:Cauchyinterior} the assumption $u\in H^1(\Omega)$ is not really necessary. This assumption is made
  just for the sake of simplicity. A more appropriate assumption would be
\begin{equation}
\label{weaken_H1}
u\in\bigcap_{\overline{K}\subset\Omega\cup\Sigma}H^1(K).
\end{equation}
\end{remark}
\begin{remark}
  \label{rem:Holder_stability_extension}
  As is evident {}from the proof, the stability of H\"older type could be obtained also on any connected
  subset $G$ of $\Omega$ having a positive distance $h$ {}from $\Sigma'=\partial\Omega\setminus \sigma$ (rather than {}from
  $\partial\Omega$). However, unless some additional assumption on the shape and regularity of $\partial\Sigma$ is made, it might not be possible to specify in a precise manner how the constant $C$ and the exponent $\delta$ appearing in \eqref{Cauchyinterior} behave with respect to $h$ as $h\to 0$. This is in fact an important issue in view of obtaining a global stability bound when no global regularity (Lipschitz) information on $\partial\Omega$ is available. See Section~\ref{sec:
stability_loglog} below for further discussion on this issue.
  \end{remark}

\begin{proof}[Proof of Theorem~\ref{theo:Cauchy_global}]
Similarly to what we did in the previous proof, we apply Theorem~\ref{theo:PSglobal} with
$u$, $f$, $F$, $\Omega$ replaced with $\widetilde{u}$, $\widetilde{f}$,
$\widetilde{F}$, $\widetilde{\Omega}$, respectively. As before, we consider the ball
$B_{r_0}(x_0)$ as defined in \eqref{r1}, \eqref{P-} and we use the fact that, by Lemma~\ref{lemma:extension}
\begin{equation}
  \label{bound_norm_ball_bis}
  \|\widetilde{u}\|_{L^2(B_{r_0}(x_0))}\leq C\eta,
\end{equation}
\begin{equation}
  \label{bound_norm_domain_bis}
  \|\widetilde{u}\|_{H^1(\widetilde{\Omega})}\leq C(E+\eta),
\end{equation}
with $C>0$ only depending on $M_0$ and $\frac{\rho_0}{\rho_1}$.
\end{proof}

\section{A generalization of the stability results for the Cauchy problem} \label{sec:
stability_loglog}
\begin{theorem}[Global stability for the Cauchy problem - generalization]
    \label{theo:Cauchyglobal_loglog}
    Let $u\in H^1(\Omega)$ be a solution to the Cauchy Problem~\ref{weakCauchy}, where
    $\Sigma$ satisfies the conditions in Definition~\ref{def:Lipschitz_Sigma} and Definition~\ref{def:size_Sigma}, $f\in L^2(\R^n)$ and $F\in L^2(\R^n; \R^n)$ satisfy \eqref{bound_fF} and $g\in H^{\frac{1}{2}}\left( \Sigma \right)$, $\psi \in H^{-\frac{1}{2}}\left( \Sigma\right)$ satisfy \eqref{bound_g_psi}. Let us assume that there exists a family $\{G_h\}$, $0<h\leq \overline{h}$, $\overline{h}$ as in Theorem~\ref{theo:Cauchyinterior},
    of connected open sets $G_h\subset\Omega$ satisfying the conditions
\eqref{dist_G_bordo}, \eqref{dist_P_G} and also
\begin{equation}
  \label{measure_small}
  |\Omega\setminus G_h|\leq Q\rho_0^n\left(\frac{h}{\rho_0}\right)^\vartheta,
\end{equation}
for given $Q$, $\vartheta>0$.
If, given $E>0$, $p>2$, we a-priori assume that
\begin{equation}
  \label{bound_E_stabilityloglog}
  \|u\|_{L^p(\Omega)}\leq E,
\end{equation}
   then we have
\begin{equation}
    \label{Cauchygloballoglog}
    \|u\|_{L^2(\Omega)}\leq
    (E+\varepsilon+\eta)\omega\left(\frac{\varepsilon+\eta}{e(E+\varepsilon+\eta)}\right),
\end{equation}
where
\begin{equation}
    \label{omega_bisloglog}
    \omega(t)\leq \frac{C}{\left(\log|\log t|\right)^{S}},\quad \hbox{for  }0<t< \frac{1}{e},
\end{equation}
where $C>0$ and $S$, $0<S<1$, only depend on $K$, $L$, $\kappa$, $M_0$, $\frac{\rho_0}{\rho_1}$,
$\frac{|\Omega|}{\rho_0^n}$, $Q$, $\vartheta$ and $p$.
\end{theorem}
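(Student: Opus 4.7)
The plan is to reduce the statement to the interior stability theorem (Theorem~\ref{theo:Cauchyinterior}) applied on the invading family $\{G_h\}$, combined with control of the residual mass on $\Omega\setminus G_h$ through H\"older's inequality and the $L^p$ a-priori bound. No global boundary regularity of $\Omega$ is used: the only way $\partial\Omega$ enters is through the invading sets $G_h$ and the volume decay \eqref{measure_small}.

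First, since $\|u\|_{L^p(\Omega)}\leq E$, H\"older's inequality yields an $L^2$ a-priori bound
\begin{equation*}
\|u\|_{L^2(\Omega)}\leq \left(\frac{|\Omega|}{\rho_0^n}\right)^{\frac{1}{2}-\frac{1}{p}} E =: E_0,
\end{equation*}
so Theorem~\ref{theo:Cauchyinterior} applies on each $G_h$ and gives
\begin{equation*}
\|u\|_{L^2(G_h)}\leq C_1\left(\frac{|\Omega|}{h^n}\right)^{\frac{1}{2}}(\varepsilon+\eta)^{\delta(h)}(E_0+\varepsilon+\eta)^{1-\delta(h)},\quad \delta(h)\geq \alpha^{C_2|\Omega|/h^n}.
\end{equation*}
For the residual, H\"older's inequality together with \eqref{measure_small} gives
\begin{equation*}
\|u\|_{L^2(\Omega\setminus G_h)}\leq \left(\frac{|\Omega\setminus G_h|}{\rho_0^n}\right)^{\frac{1}{2}-\frac{1}{p}}\|u\|_{L^p(\Omega)}\leq C_3\, Q^{\frac{1}{2}-\frac{1}{p}}\left(\frac{h}{\rho_0}\right)^{\beta}E,
\end{equation*}
with $\beta=\vartheta(\frac{1}{2}-\frac{1}{p})>0$. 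Setting $M=E+\varepsilon+\eta$ and $t=(\varepsilon+\eta)/M$ and adding the two contributions, we obtain
\begin{equation*}
\|u\|_{L^2(\Omega)}\leq C\, M\left[h^{-n/2}\, t^{\alpha^{C_2|\Omega|/h^n}}+\left(\frac{h}{\rho_0}\right)^{\beta}\right],
\end{equation*}
with $C$ depending only on the admissible parameters.

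The remaining and crucial step is to optimize in $h\in(0,\overline{h}]$. The Cauchy smallness exponent $\alpha^{C_2|\Omega|/h^n}$ decays \emph{double-exponentially} as $h\to 0$, while the geometric error $(h/\rho_0)^\beta$ decays only polynomially; this mismatch is precisely what forces the downgrade from the logarithmic rate of Theorem~\ref{theo:Cauchy_global} to the weaker log--log rate of \eqref{omega_bisloglog}. Choosing $h=h(t)$ so that $\alpha^{C_2|\Omega|/h^n}|\log t|$ equals a suitable slowly growing function of $|\log t|$ leads to $h^n\sim 1/\log|\log t|$, and makes both terms of size $(\log|\log t|)^{-\beta/n}$, up to lower order factors absorbed by the choice of the slowly growing function. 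This yields \eqref{Cauchygloballoglog}--\eqref{omega_bisloglog} with $S$ any fixed number in $(0,\min\{\beta/n,1\})$. For $t$ above a threshold depending on the data, the bound is rendered trivial by enlarging $C$; for $t$ below the threshold, the above optimization produces the claimed rate. The main technical obstacle is therefore not the interior stability, which is handed to us by Theorem~\ref{theo:Cauchyinterior}, but this delicate balancing between the double-exponentially small Cauchy exponent and the polynomially small residual of the invading family.
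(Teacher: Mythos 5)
Your proposal is correct and follows essentially the same route as the paper: an $L^2$ bound from the $L^p$ assumption via H\"older so that Theorem~\ref{theo:Cauchyinterior} applies on $G_h$, the residual term controlled by H\"older together with \eqref{measure_small}, and then optimization in $h$ (the paper sets $s=h^n/|\Omega|$ and cites the computation in \cite[Proof of Proposition~3.1]{ABRVpisa} for the infimum, arriving at exactly the balance $h^n\sim 1/\log|\log\tau|$ you describe, after the same cosmetic replacement of $t$ by $t/e$ to keep the double logarithm meaningful).
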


\begin{remark}
  \label{rem:loglog_PS}
In a completely analogous fashion, a global estimate of propagation of smallness of \emph{log--log}-type in a general connected open set $\Omega$ could be stated and proved.
\end{remark}

\begin{proof}[Proof of Theorem~\ref{theo:Cauchyglobal_loglog}]
By H\"older inequality, and by our convention on norms (Remark~\ref{rem:normal_norm}), we have
\begin{equation}
  \label{eq:L2_Lp}
\|u\|_{L^2(\Omega)}\leq
\left(\frac{|\Omega|}{\rho_0^n}\right)
^{\frac{1}{2}-\frac{1}{p}}\|u\|_{L^p(\Omega)}\leq CE,
\end{equation}
where $C>0$ only depends on $\frac{|\Omega|}{\rho_0^n}$ and $p$.
Hence, by Theorem~\ref{theo:Cauchyinterior},
\begin{equation}
\|u\|_{L^2(G_h)}\leq
C_h(E+\varepsilon+\eta)\left(\frac{\varepsilon+\eta}{E+\varepsilon+\eta}\right)
^{\delta_h},
\end{equation}
where
\begin{equation}
   \label{Cdef_ter}
C_h= C_1\left(\frac{|\Omega|}{h^n}\right)^{\frac{1}{2}}\quad\text{and}\quad
\delta_h\geq \alpha^{\frac{C_2|\Omega|}{h^n}},
\end{equation}
with $\alpha\in (0,1)$ only depending on $K$, $L$ and $\kappa$, $C_1$ only depending on $K$, $L$, $\kappa$, $M_0$ and $\frac{\rho_0}{\rho_1}$, and
$C_2$ only depending on $K$.

We notice that
\begin{multline}
  \label{eq:L2(Gh)}
\|u\|_{L^2(G_h)}\leq
C_h(E+\varepsilon+\eta)\left(\frac{\varepsilon+\eta}{E+\varepsilon+\eta}\right)
^{\delta_h}=\\ =C_h e^{\delta_h}(E+\varepsilon+\eta)\left(\frac{\varepsilon+\eta}
{e(E+\varepsilon+\eta)}\right)
^{\delta_h}\leq
C_h e(E+\varepsilon+\eta)\left(\frac{\varepsilon+\eta}
{e(E+\varepsilon+\eta)}\right)
^{\delta_h}.
\end{multline}
This last step has no substantial importance, it is made only because, at some later stage, it will be convenient to have that the ratio $\frac{\varepsilon+\eta}
{e(E+\varepsilon+\eta)}$ is strictly less than $e^{-1}$.

Again using H\"older
inequality, we also have
\begin{equation}
  \label{eq:L2(Omega-Gh)}
\|u\|_{L^2(\Omega\setminus G_h)}\leq
\left(\frac{|\Omega\setminus G_h|}{\rho_0^n}\right)
^{\frac{1}{2}-\frac{1}{p}}E\leq E
\left(
Q\left(\frac{|\Omega|}{\rho_0^n}\right)^{\frac{\vartheta}{n}}
\left(\frac{h^n}{|\Omega|}\right)^{\frac{\vartheta}{n}}
\right)^{\frac{1}{2}-\frac{1}{p}}.
\end{equation}
Setting
\begin{equation}
\label{sdef}
s=\frac{h^n}{|\Omega|}, \quad 0<s\leq s_0=\frac{\overline{h}^n}{|\Omega|},
\end{equation}
we obtain
\begin{equation}
  \label{eq:L2(Omega)less}
\|u\|_{L^2(\Omega)}\leq
C(E+\varepsilon+\eta)\left(
s^{-\frac{1}{2}}\tau^{\alpha^{-C_2s}}+s^D
\right), \quad \hbox{for every } s\in(0,s_0],
\end{equation}
where
\begin{equation}
  \label{eq:tau_loglog}
\tau=\frac{\varepsilon+\eta}{e(E+\varepsilon+\eta)}\in (0,e^{-1}),
\end{equation}
\begin{equation}
D=\frac{\theta}{n}\left(\frac{1}{2}-\frac{1}{p}\right),
\end{equation}
and $C$ only depends on
$K$, $L$, $\kappa$, $M_0$, $\frac{\rho_0}{\rho_1}$,
$\frac{|\Omega|}{\rho_0^n}$, $Q$, $\vartheta$ and $p$.
Note that also $s_0$ only depends on $K$, $L$, $\kappa$, $M_0$, $\frac{\rho_0}{\rho_1}$, $\frac{|\Omega|}{\rho_0^n}$.
A simple calculation (see for instance \cite[Proof of Proposition~3.1]{ABRVpisa}) gives
\begin{equation}
  \label{eq:finalestimate}
\underset{0<s\leq s_0}{\inf}
\left(
s^{-\frac{1}{2}}\tau^{\alpha^{-C_2s}}+s^D
\right)\leq C(\log|\log\tau|)^{-S}, \quad \hbox{for every } \tau\in(0,e^{-1}),
\end{equation}
where $C$, $S>0$ only depend on
$K$, $L$, $\kappa$, $M_0$, $\frac{\rho_0}{\rho_1}$,
$\frac{|\Omega|}{\rho_0^n}$, $Q$, $\vartheta$ and $p$.
\end{proof}

\def\cprime{$'$} \def\cydot{\leavevmode\raise.4ex\hbox{.}} \def\cprime{$'$}
  \def\cprime{$'$} \def\cprime{$'$} \def\cprime{$'$} \def\cprime{$'$}

\end{document}